\documentclass[reqno,10pt]{amsart}

\usepackage{latexsym,amssymb,amsthm,amsmath,amscd,a4wide}

\theoremstyle{plain}
\newtheorem{theorem}{Theorem}[section]
\newtheorem*{Theorem B}{Theorem B}
\newtheorem*{Theorem A}{Theorem A}
\newtheorem*{Theorem C}{Theorem C}
\newtheorem*{Theorem D}{Theorem D}

\newtheorem{lemma}{Lemma}[section]

\newtheorem{proposition}{Proposition}[section]

\newtheorem{example}{Example}[section]
\numberwithin{equation}{section}

\theoremstyle{remark}
\newtheorem{remark}{Remark}[section]

 \numberwithin{equation}{section}

\def\<{\left < }
\def\>{\right >}
\def\({\left ( }
\def\){\right )}
\def\o{\omega }
\def\lam{\lambda}

\def\e{\eqref}
\def\g{\gamma}

\def\tn{\tilde\nabla}

\def\n2{\left[{n\over2}\right]}

\usepackage[usenames]{color}

\begin{document}

\title[$\delta(2,n\hskip-.01in-\hskip-.01in 2)$-ideal Lagrangian submanifolds]{Classification of $\Large{\boldsymbol \delta} \Large{\boldsymbol (} \Large{\boldsymbol 2} \Large{\boldsymbol ,} \Large{\boldsymbol n} \hskip-.01in \Large{\boldsymbol -} \hskip-.01in \Large{\boldsymbol 2} \Large{\boldsymbol )}$-ideal Lagrangian submanifolds\\ in $\Large{\boldsymbol n}$-dimensional complex space forms}

\author[B.-Y. Chen]{Bang-Yen Chen}
\address{Chen: Department of Mathematics \\ Michigan State University \\ East Lansing, Michigan 48824--1027 \\ U.S.A.}
\email{bychen@math.msu.edu}

\author[F. Dillen]{Franki Dillen}

\author[J. Van der Veken]{Joeri Van der Veken}
\address{Dillen, Van der Veken: KU  Leuven \\ Departement Wiskunde \\ Celestijnenlaan 200 B, Box 2400 \\ BE-3001 Leuven \\ Belgium} 
\email{joeri.vanderveken@wis.kuleuven.be}

\author[L. Vrancken]{Luc Vrancken}
\address{Vrancken: LAMAV, ISTV2 \\Universit\'e de Valenciennes\\ Campus du Mont Houy\\ 59313 Valenciennes Cedex 9\\ France and KU  Leuven \\ Departement Wiskunde \\Celestijnenlaan 200 B, Box 2400 \\ BE-3001 Leuven \\ Belgium}  
\email{luc.vrancken@univ-valenciennes.fr}

\begin{abstract}   It was proven in \cite{cdvv}  that every Lagrangian submanifold
$M$ of a complex space form $\tilde M^{n}(4c)$ of constant holomorphic sectional
curvature $4c$ satisfies the following optimal inequality:
\begin{align*} 
\delta(2,n\hskip-.01in-\hskip-.01in 2) \leq \text{$ \frac{n^{2}(n-2)}{4(n-1)} $} H^2 +2(n-2) c,
\end{align*} 
where $H^{2}$ is the squared mean curvature and $\delta(2,n\hskip-.01in-\hskip-.01in 2)$ is a $\delta$-invariant on $M$. In this paper we classify Lagrangian submanifolds of complex space forms $\tilde M^{n}(4c)$, $n \geq 5$, which satisfy the equality case of this  improved inequality at every point.
\end{abstract}

\keywords{Lagrangian submanifold; optimal inequalities; $\delta$-invariants; ideal submanifold.}

 \subjclass[2000]{Primary: 53D12, Secondary  53C40}
\thanks{This research was supported by the Belgian Interuniversity Attraction Pole  P07/18 (Dygest) and project 3E160361 (Lagrangian and calibrated submanifolds) of the KU Leuven Research Fund}

\maketitle

\section{Introduction} 

Let $M$ be an $n$-dimensional Riemannian manifold and denote for all $p\in M$ and for all plane sections $\pi\subseteq T_pM$, the sectional curvature of $M$ associated with $\pi$ by $K(\pi)$. If $L$ is an $r$-dimensional subspace of $T_pM$ with $2 \leq r \leq n$ and $\{e_1,\ldots,e_r\}$ is an orthonormal basis of $L$, the scalar curvature of $L$ is defined by
\begin{align}\label{1.2}
\tau(L)=\sum_{{\alpha,\beta=1}\atop{\alpha<\beta}}^r K(e_\alpha\wedge e_\beta).
\end{align}
It is easily checked that this definition does not depend on the chosen orthonormal basis of $L$. In particular, the scalar curvature $\tau$ of $M$ at $p$ is defined to be $\tau(p) = \tau(T_pM)$.

For given integers $n\geq 3$ and  $k\geq 1$, we denote by $\mathcal S(n,k)$ the finite set  consisting of all $k$-tuples $(n_1,\ldots,n_k)$ of integers  satisfying  $2 \leq n_1 \leq \cdots \leq
n_k \leq n-1$ and $n_1+\cdots+n_k\leq n.$ Denote the union $\bigcup_{k\geq 1}\mathcal S(n,k)$ by ${\mathcal S}(n)$.
For each $(n_1,\ldots,n_k)\in \mathcal S(n)$, the first author introduced in \cite{c00a} the Riemannian invariant $\delta{(n_1,\ldots,n_k)}$ defined  by
\begin{align}\label{1.3} 
\delta(n_1,\ldots,n_k)(p)=\tau(p)- \inf\{\tau(L_1)+\cdots+\tau(L_k)\}
\end{align} 
for any $p\in M^{n}$, where $L_1,\ldots,L_k$ run over all $k$-tuples of mutually orthogonal subspaces of $T_pM^{n}$ such that  $\dim L_j=n_j$ for $j=1,\ldots,k$. 

For any submanifold of a real space form of constant sectional curvature~$c$, we have the following sharp general inequality relating intrinsic data of the submanifold (the $\delta$-invariant) with extrinsic data of the immersion (the mean curvature). We refer to \cite{c00a,book} for more details.

\begin{theorem}\label{T:1.1} Let $M$ be an $n$-dimensional submanifold of a real space form of constant sectional curvature $c$. Then for each $k$-tuple  $(n_1,\ldots,n_k)\in\mathcal S(n)$ and at any point $p \in M$, the following inequality holds:
\begin{align*} 
\delta(n_1,\ldots,n_k) \leq \frac{n^2(n+k-1-\sum_{j=1}^k n_j)}{2(n+k-\sum_{j=1}^k n_j)} H^2 + b(n_1,\ldots,n_k)c,
\end{align*} 
where $H^2$ is the squared mean curvature of $M$ at $p$ and $b(n_1,\ldots,n_k)$ is defined by 
\begin{align*} b(n_1,\ldots,n_k)=\frac{n(n-1)}{2}-\sum_{j=1}^k \frac{n_j(n_j-1)}{2}.
\end{align*}
\end{theorem}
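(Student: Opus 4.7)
My plan is to combine Gauss's equation with a Cauchy--Schwarz estimate on partitioned diagonal sums of the second fundamental form.

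Fix $p\in M$, set $N:=n+k-\sum_{j=1}^k n_j$, and choose an orthonormal basis $\{e_1,\ldots,e_n\}$ of $T_pM$ adapted to a decomposition $T_pM=L_1\oplus\cdots\oplus L_k\oplus L_0$ of mutually orthogonal subspaces with $\dim L_j=n_j$ and $L_0$ the orthogonal complement. Choose an orthonormal normal frame $\{e_{n+1},\ldots,e_{n+m}\}$ with $e_{n+1}$ parallel to the mean curvature vector, so that $\operatorname{tr} h^{n+1}=nH$ while $\operatorname{tr} h^r=0$ for $r\geq n+2$, where $h^r_{ij}:=\langle h(e_i,e_j),e_r\rangle$. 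Applying Gauss's equation to $T_pM$ and to each $L_j$ and subtracting, the constant-curvature contribution is exactly $b(n_1,\ldots,n_k)c$, while the surviving off-diagonal terms $-(h^r_{ij})^2$ carry the correct sign and may be discarded, yielding
\begin{align*}
2\tau(p)-2\sum_{j=1}^k\tau(L_j)-2b(n_1,\ldots,n_k)c\leq 2\sum_{r=n+1}^{n+m}\sum_{(\alpha,\beta)\in\mathcal A} h^r_{\alpha\alpha}h^r_{\beta\beta},
\end{align*}
where $\mathcal A$ collects pairs $\alpha<\beta$ that do not both lie in the same $L_j$.

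For each fixed $r$ I would form the $N$ numbers consisting of the $k$ block sums $S_j:=\sum_{\alpha\in L_j} h^r_{\alpha\alpha}$ together with the singletons $\{h^r_{\alpha\alpha}:\alpha\in L_0\}$; a direct expansion gives
\begin{align*}
2\sum_{(\alpha,\beta)\in\mathcal A} h^r_{\alpha\alpha}h^r_{\beta\beta}=(\operatorname{tr} h^r)^2-\sum_{j=1}^k S_j^2-\sum_{\alpha\in L_0}(h^r_{\alpha\alpha})^2.
\end{align*}
The quadratic--arithmetic mean inequality for these $N$ quantities (whose sum is $\operatorname{tr} h^r$) then bounds the right-hand side by $\tfrac{N-1}{N}(\operatorname{tr} h^r)^2$. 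Summing over $r$ and using $(\operatorname{tr} h^{n+1})^2=n^2H^2$ together with $\operatorname{tr} h^r=0$ for $r\geq n+2$ produces the $H^2$-coefficient
\begin{align*}
\frac{n^2(N-1)}{2N}=\frac{n^2(n+k-1-\sum_j n_j)}{2(n+k-\sum_j n_j)},
\end{align*}
as required. Since the resulting upper bound is independent of the choice of $(L_1,\ldots,L_k)$, passing to the infimum on the left converts $\tau(p)-\sum_j\tau(L_j)$ into $\delta(n_1,\ldots,n_k)$.

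The main obstacle is essentially bookkeeping: one has to identify the correct partition of cardinality $N$ -- the $k$ subspaces $L_j$ together with the singletons of $L_0$ -- so that Cauchy--Schwarz produces precisely the coefficient in the statement. Once this is in place, the equality case can be read off from the two steps above: each $S_j$ and each diagonal entry $h^{n+1}_{\alpha\alpha}$ with $\alpha\in L_0$ must equal $nH/N$; every $S_j$ and every such diagonal entry must vanish for $r\geq n+2$; and all off-block components of $h$ must vanish. This shape-of-$h$ characterization is the algebraic ``ideal'' condition that drives the classification problem addressed in the rest of the paper.
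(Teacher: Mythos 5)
Your argument is correct and is essentially the standard proof of this inequality: the paper itself does not reprove Theorem \ref{T:1.1} but cites \cite{c00a,book}, where the same ingredients appear — the Gauss equation applied to $T_pM$ and to the $L_j$, discarding the off-block terms $-(h^r_{\alpha\beta})^2$, and an estimate on the $N=n+k-\sum_j n_j$ grouped diagonal sums which is exactly your quadratic--arithmetic mean step, there packaged as Chen's algebraic lemma. (A small remark: the special choice $e_{n+1}\parallel H$ is not even needed, since $\sum_r(\operatorname{tr} h^r)^2=n^2H^2$ in any orthonormal normal frame.)
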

 
The same inequality holds for Lagrangian submanifolds of a complex space form $\tilde M^n(4c)$, but it is not optimal in that case. Recall that a submanifold of a K\"ahler manifold is called \emph{Lagrangian} if the almost complex structure $J$ induces an isomorphism between the tangent space and the normal space at every point or, equivalently, if the K\"ahler 2-form restricted to the submanifold vanishes. 

An optimal result for Lagrangian submanifolds was obtained in \cite{cdvv}, where a distinction needed to be made between the cases $n_1+\ldots+n_k < n$ and $n_1+\ldots+n_k = n$. In particular, we obtained the following results.

\begin{theorem} \label{T:1.2}
Let $M$ be a Lagrangian submanifold of a complex space form
$\tilde M^n(4c)$. Then for each $k$-tuple $(n_1,\ldots,n_k)\in\mathcal S(n)$ with $n_1+\ldots+n_k < n$, and at any point of $M^n$, the following inequality holds:
\begin{equation*}
\delta(n_1,\ldots,n_k)
\leq \frac{n^2 \left(n -\sum_{j=1}^k n_j + 3k - 1 - 6\sum_{j=1}^k \frac{1}{2+n_j} \right)}
{2 \left(n -\sum_{j=1}^k n_j + 3k + 2 - 6\sum_{j=1}^k \frac{1}{2+n_j} \right)} H^2
+ b(n_1,\ldots,n_k)c,
\end{equation*}
where $b(n_1,\ldots,n_k)$ is as in Theorem \ref{T:1.1}.
\end{theorem}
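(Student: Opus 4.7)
The plan is to adapt Chen's general strategy for $\delta$-invariant inequalities to the Lagrangian setting, where the total symmetry of the second fundamental form (viewed as a cubic form via $J$) yields an improved bound over Theorem~\ref{T:1.1}. Fix $p \in M$ and $k$ mutually orthogonal subspaces $L_1,\ldots,L_k \subseteq T_p M$ with $\dim L_j = n_j$, and pick an orthonormal basis $\{e_1,\ldots,e_n\}$ of $T_p M$ such that the first $n_1$ vectors span $L_1$, the next $n_2$ span $L_2$, and so on, with the remaining $n - \sum_j n_j$ vectors spanning the orthogonal complement $L_0$. Since $M$ is Lagrangian in $\tilde M^n(4c)$, the vectors $\{Je_1,\ldots,Je_n\}$ form an orthonormal basis of the normal space, and the components $h_{ij}^k := \langle h(e_i,e_j), Je_k \rangle$ are totally symmetric in $i,j,k$.

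Next, I would invoke the Gauss equation for a Lagrangian submanifold in $\tilde M^n(4c)$, namely $K(e_i \wedge e_j) = c + \sum_k \bigl( h_{ii}^k h_{jj}^k - (h_{ij}^k)^2 \bigr)$, to express $\tau(p)$ and each $\tau(L_j)$ explicitly as quadratic polynomials in the $h_{ij}^k$. Subtracting writes $\delta(n_1,\ldots,n_k) - b(n_1,\ldots,n_k)c$ as a quadratic form in $(h_{ij}^k)$, and the goal becomes showing that this quadratic form, after taking the infimum over all admissible choices of $L_1,\ldots,L_k$, is bounded above by the displayed multiple of $H^2 = \tfrac{1}{n^2} \sum_k \bigl( \sum_i h_{ii}^k \bigr)^2$.

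The heart of the argument is an algebraic lemma of the following shape: among totally symmetric tensors $(h_{ij}^k)$ on $\mathbb{R}^n$ with a prescribed trace vector, bound the quadratic form produced in the previous step. In the regime $\sum_j n_j < n$ the presence of a nontrivial complement $L_0$ allows mixed components $h_{ab}^c$ with $a,b$ inside a single $L_j$ and $c$ in $L_0$ to enter the optimization, and the symmetry $h_{ab}^c = h_{ac}^b = h_{bc}^a$ couples them in a way that sharpens the constant. Organizing the problem block-wise so that each $L_j$ yields its own local extremum subproblem, a Lagrange-multiplier computation produces weights of the form $1/(n_j+2)$; summing over $j$ and solving for the coefficient of $H^2$ then yields exactly the fraction appearing in the statement. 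Sharpness is verified by exhibiting critical $(h_{ij}^k)$ saturating the lemma, built as a direct sum of block-diagonal configurations on each $L_j$.

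The main obstacle is the refined algebraic lemma. In Chen's classical setting the analogous bound on $\sum_{i<j} a_i a_j$ under a single linear constraint is elementary; in the Lagrangian setting the totally symmetric tensor constraint couples many more components simultaneously, and the optimal weight $1/(n_j+2)$ (hence the factor $6 \sum_j 1/(n_j+2)$ in the statement) is not evident a priori. Once this lemma is in place, the translation between the geometric inequality and the algebraic optimization becomes bookkeeping through the Gauss equation.
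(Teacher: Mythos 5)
This theorem is not proved in the present paper at all: it is quoted from \cite{cdvv}, so the only fair comparison is with the strategy of that reference. Your outline does match that strategy in its general shape (adapted orthonormal basis subordinate to $L_1,\ldots,L_k$ and the complement, Gauss equation with the totally symmetric cubic form $h_{ij}^k$, reduction of $\delta(n_1,\ldots,n_k)-b(n_1,\ldots,n_k)c$ to a purely algebraic estimate against $H^2$). But the proposal stops precisely where the actual proof begins. The ``refined algebraic lemma'' is never stated precisely, let alone proved: you describe only its ``shape,'' assert that ``a Lagrange-multiplier computation produces weights of the form $1/(n_j+2)$,'' and then declare the rest bookkeeping. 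That lemma carries the entire content of the theorem --- it is what separates the improved Lagrangian constant from the general bound of Theorem \ref{T:1.1} --- and in \cite{cdvv} its proof requires a genuine case analysis over where the normal index lies (inside one of the $L_j$'s versus in the complementary block), different quadratic estimates in each case, and an optimization over the free coefficients to identify the sharp constant; none of that is carried out or even sketched at a level where one could check that the exact fraction $\frac{n^2\bigl(n-\sum_j n_j+3k-1-6\sum_j\frac{1}{2+n_j}\bigr)}{2\bigl(n-\sum_j n_j+3k+2-6\sum_j\frac{1}{2+n_j}\bigr)}$ emerges, rather than some other combination of the weights $1/(n_j+2)$.

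Two further points would need attention even granting the lemma's shape. First, since $\delta(n_1,\ldots,n_k)=\tau-\inf\{\tau(L_1)+\cdots+\tau(L_k)\}$, an upper bound on $\delta$ requires the algebraic estimate to hold for \emph{every} admissible choice of mutually orthogonal subspaces, uniformly in the cubic form with prescribed trace; your phrase about ``taking the infimum over all admissible choices'' blurs this, although your setup (fixing an arbitrary $k$-tuple) is the correct one. Second, the Lagrange-multiplier argument is an optimization over a non-compact set of symmetric tensors, so one must justify that the relevant supremum is attained or otherwise control degenerate limiting configurations; this is exactly the kind of issue the explicit inequalities in \cite{cdvv} are designed to avoid. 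As it stands, your text is a reasonable road map to the known proof, but not a proof: the decisive algebraic estimate is assumed rather than established.
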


\begin{theorem} \label{T:1.3}
Let $M$ be a Lagrangian submanifold of a complex space form
$\tilde M^n(4c)$. Then for each $k$-tuple $(n_1,\ldots,n_k)\in\mathcal S(n)$ with $n_1+\ldots+n_k = n$, and at any point of $M^n$, the following inequality holds:
$$\delta(n_1,\ldots,n_k)
\leq \frac{n^2\left(k-1-2\sum_{j=2}^k\frac{1}{n_j+2}\right)}{2\left(k-2\sum_{j=2}^k\frac{1}{n_j+2}\right)}H^2
+b(n_1,\ldots,n_k)c,
$$
where $b(n_1,\ldots,n_k)$ is as in Theorem \ref{T:1.1}.
\end{theorem}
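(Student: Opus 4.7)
The plan is to reduce the inequality to a pointwise algebraic estimate on the components of the second fundamental form, exploiting the total symmetry property that is special to Lagrangian submanifolds of Kähler manifolds. Fix a point $p \in M$, mutually orthogonal subspaces $L_1, \ldots, L_k \subset T_p M$ with $\dim L_j = n_j$, and an orthonormal basis $\{e_1, \ldots, e_n\}$ adapted to the decomposition $T_pM = L_1 \oplus \cdots \oplus L_k$ (the hypothesis $n_1 + \cdots + n_k = n$ ensures that this exhausts the tangent space). Writing $I_j$ for the index block corresponding to $L_j$ and $h^r_{ij} = \langle h(e_i, e_j), J e_r \rangle$, the key structural fact I would invoke is that $h^r_{ij}$ is completely symmetric in all three indices $i,j,r$, a consequence of $\tilde\nabla J = 0$ together with the Lagrangian condition.

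Next, apply the Gauss equation of $M$ in $\tilde M^n(4c)$ to expand
\begin{equation*}
2\tau(p) = n(n-1)c + n^2 H^2 - \|h\|^2
\end{equation*}
and the analogous expressions for each $\tau(L_j)$. The curvature-constant contributions combine to give precisely $b(n_1, \ldots, n_k)c$, so the problem reduces to the sharp algebraic inequality
\begin{equation*}
\tau(p) - \sum_{j=1}^k \tau(L_j) - b(n_1,\ldots,n_k)c \;\le\; \frac{n^2\bigl(k - 1 - 2\sum_{j=2}^k \tfrac{1}{n_j+2}\bigr)}{2\bigl(k - 2\sum_{j=2}^k \tfrac{1}{n_j+2}\bigr)}\, H^2
\end{equation*}
on the space of totally symmetric 3-tensors $(h^r_{ij})$. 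Since the target bound must hold for \emph{every} choice of decomposition before one passes to the infimum in \eqref{1.3}, this is a purely algebraic statement.

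The heart of the proof, and the main obstacle, is this algebraic inequality. I would split the components $h^r_{ij}$ into classes according to how the triple $(i,j,r)$ distributes among the blocks $I_1, \ldots, I_k$: all three in one block, two in one and one in another, or all three in distinct blocks. The total symmetry of $h$ forces nontrivial relations between these classes and allows one to rewrite the left-hand side as a sum of squares plus a residual quadratic form involving only the traces $\operatorname{tr}_{I_j} h^r = \sum_{i \in I_j} h^r_{ii}$. The peculiar weights $\tfrac{1}{n_j+2}$ arise as the solution of a Lagrange-multiplier optimization: within a block $I_j$ of dimension $n_j$, the symmetry $h^r_{ij} = h^i_{rj} = h^j_{ri}$ couples the single "diagonal" component $h^r_{rr}$ (with $r \in I_j$) to $n_j$ mean-curvature-type traces $h^r_{ii}$ (with $i \in I_j$) and one further trace from the mean curvature direction, giving effectively $n_j + 2$ coupled degrees of freedom per eigendirection of $H$. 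Carrying out this constrained optimization — equivalently, isolating the sharp Cauchy–Schwarz inequality whose equality case corresponds to the explicit form of $h$ that would appear in the subsequent classification results — produces the stated constant. Substituting back into the Gauss-equation identity and passing to the infimum over all decompositions completes the proof.
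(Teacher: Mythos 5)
Your outline gets the easy reductions right: the Gauss equation identity $2\tau = n(n-1)c + n^{2}H^{2} - \|h\|^{2}$, the analogous expansions of the $\tau(L_j)$, the cancellation of the constant-curvature terms into $b(n_1,\ldots,n_k)c$, and the observation that total symmetry of the cubic form $h^{r}_{ij}$ is the special Lagrangian input. But the entire content of the theorem is the sharp algebraic inequality for totally symmetric $3$-tensors that you defer to ``a Lagrange-multiplier optimization'' and ``the sharp Cauchy--Schwarz inequality''; this is asserted, not proved, and it is precisely the step that took the authors of \cite{cdvv} (to which the present paper delegates the proof of Theorem \ref{T:1.3}) a long and delicate sequence of estimates, involving an adapted choice of frame relative to $H$ and block-by-block completions of squares with non-obvious weights. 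A proposal that stops at ``carrying out this constrained optimization produces the stated constant'' has not proved the theorem.

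Moreover, your heuristic for the weights is not even consistent with the statement: you argue symmetrically, block by block, that each $L_j$ contributes ``$n_j+2$ coupled degrees of freedom,'' which would naturally produce a coefficient involving $\sum_{j=1}^{k}\frac{1}{n_j+2}$. The actual formula has $\sum_{j=2}^{k}\frac{1}{n_j+2}$: one block is excluded, and this asymmetry is exactly what distinguishes the case $n_1+\cdots+n_k=n$ (Theorem \ref{T:1.3}) from the case $n_1+\cdots+n_k<n$ (Theorem \ref{T:1.2}), where the sum does run over all $j$. When the blocks exhaust $T_pM$ there is no complementary subspace, one block must absorb that role, and the optimization must treat it differently; since $\frac{k-1-2S}{k-2S}$ is decreasing in $S$, dropping the term $\frac{1}{n_1+2}$ (the largest one, given the ordering $n_1\le\cdots\le n_k$) weakens the constant, and the symmetric version your heuristic suggests is in general false. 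So the gap is not merely one of detail: the mechanism you describe would, if carried out as stated, aim at the wrong inequality, and the genuine argument requires singling out a distinguished block and handling the interaction of its trace components with $H$ separately, as is done in \cite{cdvv}.
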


In both cases, a (different) full description of the second fundamental form of those submanifolds realizing equality in the inequality at any of their points is also given in \cite{cdvv}. We call such a Lagrangian submanifold \emph{$\delta(n_1,\ldots,n_k)$-ideal}. Since the mean curvature is a measure for the tension a submanifold experiences from its shape in the ambient space, the submanifolds are shaped ideally in the sense that they experience the least amount of tension, given their intrinsic geometry. The full descriptions of the second fundamental forms would require us to introduce a lot of new notation, so we will restrict to the case treated in this paper, which is a special case of Theorem \ref{T:1.3}.

\begin{theorem}\label{T:1.4} For a Lagrangian submanifold $M$ of a complex space form $\tilde M^{n}(4c)$ with $n\geq 5$,  we have
 \begin{equation} \label{L23} 
\delta(2,n \hskip-.01in-\hskip-.01in 2) \leq \text{$ \frac{n^{2}(n-2)}{4(n-1)} $} H^2 +2(n-2) c.
\end{equation}

If the equality sign in \e{L23} holds at a point $p$, then there exists an orthonormal basis $\{e_{1},\ldots,e_{n}\}$ of $T_{p}M$ such that the components of the second fundamental form, $h_{AB}^C = \langle h(e_A,e_B),Je_C\rangle$, satisfy
\begin{align} 
& \label{1.9}  h^{k}_{11} = h^{k}_{22} = h^{k}_{33} + \cdots + h^{k}_{nn} = 0 && \mbox{for } k \geq 3, \\
& \label{1.10} h^{i}_{11}+ h^{i}_{22}= n h^{i}_{33} = \cdots = nh^{i}_{nn} && \mbox{for } i \in \{1,2\}, \\
& \label{1.11} h^{1}_{k\ell} = h^{2}_{k\ell} = h^{k}_{12} = 0 && \mbox{for } k, \ell \geq 3, \ k \neq \ell.
\end{align}
\end{theorem}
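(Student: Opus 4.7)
The inequality \eqref{L23} is the case $k=2$, $n_1=2$, $n_2=n-2$ of Theorem \ref{T:1.3} (one checks that $b(2,n-2) = 2(n-2)$ and that the $H^2$-coefficient specializes to $\frac{n^2(n-2)}{4(n-1)}$), so the substantive content of Theorem \ref{T:1.4} is the explicit description \eqref{1.9}--\eqref{1.11} of the second fundamental form at a point of equality.

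My plan is as follows. At a point $p\in M$ where equality holds, I start by choosing an orthonormal basis $\{e_1,\ldots,e_n\}$ of $T_pM$ such that $L_1=\mathrm{span}\{e_1,e_2\}$ and $L_2=\mathrm{span}\{e_3,\ldots,e_n\}$ realize the infimum in \eqref{1.3} defining $\delta(2,n-2)(p)$. Using the Gauss equation for a Lagrangian submanifold of a complex space form,
\begin{equation*}
R(X,Y,Z,W) = c\bigl(\la X,W\ra\la Y,Z\ra - \la X,Z\ra\la Y,W\ra\bigr) + \la h(X,W), h(Y,Z)\ra - \la h(X,Z), h(Y,W)\ra,
\end{equation*}
I express $\tau(p)$ and $\tau(L_1)+\tau(L_2)$ in terms of $h_{ij}^k=\la h(e_i,e_j),Je_k\ra$. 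The key structural input is the total symmetry of $h_{ij}^k$ in the three indices $i,j,k$, which holds because $J$ is parallel and $M$ is Lagrangian.

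After this substitution, inequality \eqref{L23} rearranges to an algebraic inequality $Q(h_{ij}^k)\ge 0$ on the totally symmetric $3$-tensor $h_{ij}^k$. Following the strategy used in \cite{cdvv} for Theorem \ref{T:1.3}, I would rewrite $Q$ as a sum of squares by (a) completing the square separately in the diagonal components, distinguishing $k\in\{1,2\}$ from $k\ge 3$ according to whether the index $k$ labels $L_1$ or $L_2$; the coefficient $\frac{n^2(n-2)}{4(n-1)}$ is precisely what forces the traces $h^k_{11}+h^k_{22}$ and $h^k_{33}+\cdots+h^k_{nn}$, and the analogous combinations for $k\in\{1,2\}$, to appear as perfect-square expressions; and (b) collecting the remaining off-diagonal contributions, which arrange into squares such as $(h^i_{k\ell})^2$ with $i\in\{1,2\}$, $k,\ell\ge 3$, $k\ne \ell$, and $(h^k_{12})^2$ with $k\ge 3$. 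Equality in \eqref{L23} then forces each square to vanish, which is exactly what \eqref{1.9}, \eqref{1.10}, and \eqref{1.11} encode.

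The main obstacle will be the algebraic bookkeeping in step (a): one must identify the precise linear combinations of $h^k_{ii}$ to be completed into squares so that both the rational coefficient $\frac{n^2(n-2)}{4(n-1)}$ and the specific trace relations in \eqref{1.10} emerge. Because $n_1+n_2=n$, Theorem \ref{T:1.4} is the boundary case of Theorem \ref{T:1.3}; the additional slack available when $n_1+\cdots+n_k<n$ is absent, which is both why the equality description is so clean and why the derivation requires careful handling. A secondary point is that the choice of $L_1$ and $L_2$ as minimizers of $\tau(L_1)+\tau(L_2)$ provides first-order optimality conditions, and these are needed to justify the vanishing of certain ``mixed'' components and to align the splitting with the prescribed basis.
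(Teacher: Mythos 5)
Your reduction of \eqref{L23} to Theorem \ref{T:1.3} is fine (with $(n_1,n_2)=(2,n-2)$ one indeed gets $\sum_{j=2}^k\frac{1}{n_j+2}=\frac1n$, hence the coefficient $\frac{n^2(n-2)}{4(n-1)}$, and $b(2,n-2)=2(n-2)$), but be aware that the present paper does not prove Theorem \ref{T:1.4} at all: it is quoted from \cite{cdvv}, where both the inequality and the equality description \eqref{1.9}--\eqref{1.11} are established. So your proposal has to be measured against the argument of \cite{cdvv} (see also \cite{CD11}), and it stops exactly where that argument's real work begins.

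The genuine gap is in your steps (a)--(b). Completing squares separately in the diagonal components $h^k_{ii}$ and then absorbing the off-diagonal components into squares $(h^i_{k\ell})^2$, $(h^k_{12})^2$ is precisely the proof of the non-improved Theorem \ref{T:1.1}; for $(2,n-2)$ that scheme yields the constant $\frac{n^2}{4}$, not $\frac{n^2(n-2)}{4(n-1)}$. The improvement is not bookkeeping: it requires using the total symmetry of the cubic form to let the same quantities appear in both blocks (e.g.\ $h^k_{11}=h^1_{1k}$, $h^k_{22}=h^2_{2k}$ and $h^1_{kk}=h^k_{1k}$ for $k\ge 3$), and then optimizing the unequal weights with which these are grouped into squares; this optimization, carried out with case distinctions in \cite{cdvv,CD11}, is exactly what produces both the factor $\frac{n^2(n-2)}{4(n-1)}$ and the asymmetric trace relations $h^i_{11}+h^i_{22}=nh^i_{33}=\cdots=nh^i_{nn}$ of \eqref{1.10}, which a blockwise completion of squares cannot generate (it is also where the hypothesis $n\ge5$ versus the different $\delta(2,2)$ description enters). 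Until you exhibit the precise linear combinations and verify the resulting identity, the proposal is a plan rather than a proof. A secondary inaccuracy: since $\delta(2,n-2)=\sup\{\tau-\tau(L_1)-\tau(L_2)\}$, what one proves is the pointwise inequality for an arbitrary orthogonal pair $(L_1,L_2)$ (or at the extremal pair, which exists by compactness); the equality conditions \eqref{1.9}--\eqref{1.11} then come from the vanishing of every term in the sharp identity, together with a further rotation of the basis inside the chosen subspaces, not from first-order (Lagrange-multiplier type) optimality conditions as you suggest.
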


The purpose of this paper is to classify $\delta(2,n\hskip-.01in-\hskip-.01in 2)$-ideal Lagrangian submanifolds in complex space forms for $n \geq 5$. Remark that the latter condition is necessary: for $\delta(2,2)$-ideal Lagrangians in $\tilde M^4(4c)$, the description of the second fundamental form is different (cfr. \cite{cdvv}).

The paper is organised as follows. Section 2 contains some preliminaries on submanifold theory and in particular on Lagrangian submanifolds of complex space forms. In Section 3, the second fundamental form of $\delta(2,n-2)$-ideal Lagrangian submanifolds of complex space forms with complex dimension $n\geq 5$ is determined, along with some additional information. It turns out that, apart from the minimal case, case (I), there are two other cases to consider: case (II) is completely solved in Section 4, by reducing it to a special case of a family of Lagrangians studied in \cite{cvv}. Case (III) is more involved and is treated in Section 5. Section 6 contains the final conclusions of the paper.

\section{Preliminaries}

\subsection{Basic formulas}
If $\tilde M^n(4c)$ is a complete simply connected K\"ahler $n$-manifold  with constant holomorphic sectional curvature $4c$, then $\tilde M^n(4c)$ is holomorphically isometric to the complex Euclidean $n$-space ${\bf C}^n$, the complex projective $n$-space $CP^n(4c)$, or the complex hyperbolic $n$-space $CH^n(-4c)$ according to $c=0$, $c>0$ or $c<0$ respectively. These manifolds are known as \emph{complex space forms}.

Let $M$ be a Lagrangian submanifold of  $\tilde M^n(4c)$.
Denote the Levi-Civita connections of $M$ and $\tilde M^n(4c)$ by $\nabla$ and $\tilde \nabla$, respectively.
The formulas of Gauss and Weingarten are given respectively by (cf. \cite{book})
\begin{align}\label{2.1} 
&\tn_X Y = \nabla_X Y + h(X,Y), \quad \tn_X \xi = -A_\xi X + \nabla^{\perp}_X \xi
\end{align}
for tangent vector fields $X$ and $Y$ and normal vector fields  $\xi$, where $h$ is the second fundamental form, $A$ is the shape operator and $\nabla^{\perp}$ is the normal connection. The second fundamental form  and the shape operator are related by $\<h(X,Y),\xi\> = \<A_\xi X,Y\>$. The mean curvature vector field of $M$ is defined by $H=(\hbox{trace}\,h)/n$ and the \emph{squared mean curvature} is given by $H^2=\<H,H\>$.

For a Lagrangian submanifold, we have (cf. \cite{book,CO})
\begin{align}\label{2.3} &\nabla^{\perp}_X JY = J \nabla_X Y,
\\\label{2.4} &A_{JX} Y = -J h(X,Y)=A_{JY}X
\end{align}
for all tangent vector fields $X$ and $Y$. Formula \e{2.4} implies in particular that the so-called cubic form $(X,Y,Z) \mapsto \<h(X,Y),JZ\>$ is totally symmetric. 
For an orthonormal basis $\{e_1,\ldots,e_n\}$ of $T_pM$, we put 
\begin{align}\label{2.7} 
h^C_{AB}=\<h(e_A,e_B),Je_C\>.
\end{align}

The equations of Gauss and Codazzi are given respectively by
\begin{align} \label{Gauss} 
& \<R(X,Y)Z,W\> = c(\<X,W\>\<Y,Z\>-\<X,Z\>\<Y,W\>)\\&
\notag \hskip1.0in +  \<h(X,W),h(Y,Z)\> - \<h(X,Z), h(Y,W)\> ,
\\ &\label{Codazzi} (\nabla_{X} h)(Y,Z) = (\nabla_{Y} h)(X,Z),\end{align}
where $R$ is the curvature tensor of $M$ and $\nabla h$ is defined by 
\begin{align}\label{2.6}
(\nabla_{X} h)(Y,Z) = \nabla^{\perp}_X h(Y,Z) - h(\nabla_X Y,Z) - h(Y,\nabla_X Z).
\end{align}

\subsection{Horizontal lifts of Lagrangian submanifolds}

We recall the link between Legendre submanifolds and Lagrangian submanifolds (cf. \cite{book,rec}).
\vskip.04in

\noindent {\it Case} (i): $CP^n(4)$.  Consider the Hopf fibration $\pi :S^{2n+1}\to CP^n(4)$, where $S^{2n+1}$ is the unit sphere in $\mathbf C^{n+1}$. For a given point $u\in S^{2n+1}$, the horizontal space at $u$ is the orthogonal complement of $i u, \, i=\sqrt{-1},$ with respect to the metric  on $S^{2n+1}$ induced from the metric on ${\bf C}^{n+1}$.   
Let $L : M \to CP^n(4)$ be a Lagrangian isometric immersion. Then there is a covering map
$\tau: \hat M \to M$ and a horizontal  immersion $\tilde L :\hat M \to S^{2n+1}$ such that
$L\circ \tau=\pi \circ \tilde L$.  Thus each Lagrangian immersion can be lifted locally (or globally if $M$ is simply connected) to a Legendre immersion of the same Riemannian manifold. In particular, a minimal Lagrangian submanifold of $CP^{n}(4)$ is lifted to a minimal Legendre submanifold of the Sasakian manifold $S^{2n+1}$.

Conversely, suppose that $\tilde L: M \to S^{2n+1}$ is a Legendre isometric immersion. Then $L =\pi\circ \tilde L:  M\to  CP^n(4)$ is a Lagrangian isometric immersion.  Under this correspondence the second fundamental forms $h^{\tilde L}$ and $h^L$ of $\tilde L$ and $L$ satisfy $\pi_*h^{\tilde L}=h^L$. Moreover, $h^{\tilde L}$ is horizontal with respect to $\pi$.  
\vskip.04in

\noindent {\it Case} (ii): $CH^n(-4)$.  We consider  the complex number space  ${\bf C}^{n+1}_1$ equipped with the pseudo-Euclidean metric
$g_0=-dz_1d\bar z_1 + dz_2d\bar z_2 + \ldots + dz_{n+1}d\bar z_{n+1}$
and look at
$$H^{2n+1}_{1}=\{z\in {\bf C}^{n+1}_1 \ | \ \<z,z\>=-1\}$$ 
with the canonical Sasakian structure, where $\<\;\,,\;\>$ is the induced inner product from $g_0$. In particular, $H_1^1=\{\lambda\in {\bf  C} \ | \ \lambda\bar\lambda=1\}.$ Then there is an $H^1_1$-action on $H_1^{2n+1}$, given by $z\mapsto \lambda z$, and at each point $z\in H^{2n+1}_1$, the vector $\xi=i z$ is tangent to the flow of the action. Since the metric $g_0$ is Hermitian, we have $\<\xi,\xi\>=-1$.  The quotient space $H^{2n+1}_1/\sim$, under the identification induced from the action, is the complex
hyperbolic space $CH^n(-4)$ with constant holomorphic sectional curvature $-4$ whose complex structure $J$ is  induced from the complex structure on ${\bf C}^{n+1}_1$ via the Hopf fibration $\pi :H^{2n+1}_1\to  CH^n(-4).$

Just like in case (i), if $L: M \to CH^n(-4)$ is a Lagrangian immersion, then there is an isometric covering map $\tau: \hat M \to M$ and a Legendre immersion $\tilde L: \hat M \to H_1^{2n+1}$ such that $L \circ \tau=\pi\circ \tilde L$. Thus every Lagrangian immersion into $CH^n(-4)$ an be lifted locally (or globally if $M$ is simply connected) to a Legendre immersion into $H^{2n+1}_1$. In particular,   minimal Lagrangian submanifolds of $CH^{n}(-4)$ are lifted to minimal Legendre submanifolds of $H^{2n+1}_{1}$. Conversely, if $\tilde L:\hat M \to H_1^{2n+1}$ is a Legendre immersion, then $L =\pi\circ \tilde L:  M\to CH^n(-4)$ is a Lagrangian immersion.  Under this correspondence the second fundamental forms $h^f$ and $h^L$ are related by $\pi_*h^{\tilde L}=h^L$. Also, $h^{\tilde L}$ is horizontal with respect to $\pi$. 

Let $h$ be the second fundamental form of $M$ in $S^{2n+1}$, respectively $H^{2n+1}_1$. Since $S^{2n+1}$ and $H^{2n+1}_1$ are totally umbilical  with mean curvature $1$ in ${\bf C}^{n+1}$, respectively ${\bf C}^{n+1}_1$, we have  
\begin{align}\label{2.9} D_XY=\nabla_XY+h(X,Y)-\varepsilon \tilde L,\end{align} 
where $\varepsilon=1$ if the ambient space is ${\bf C}^{n+1}$ and $\varepsilon=-1$ if it is ${\bf C}^{n+1}_1$ and $D$ denotes the Levi-Civita connection of ${\bf C}^{n+1}$, respectively ${\bf C}^{n+1}_1$.

\section{The second fundamental form of $\delta(2,n-2)$-ideal Lagrangian submanifolds}

In this section, we prove two lemmas. The first one, Lemma \ref{L:3.2}, describes the second fundamental form of a $\delta(2,n-2)$-ideal Lagrangian submanifold of a complex space form pointwise and follows from Theorem \ref{T:1.4}. The second one, Lemma \ref{L:3.3}, describes the second fundamental form in terms of a local orthonormal frame. 

\begin{lemma}\label{L:3.2} Let $M$ be a Lagrangian submanifold of a complex space form $\tilde M^{n}(4c)$, $n\geq 5$,  satisfying the equality case of \e{L23} at a point $p \in M$. Then there exist an orthonormal basis $\{e_{1},\ldots,e_{n}\}$ of $T_pM$ and real numbers $\g,\lambda,\mu$ and $h^{k}_{ij}$ $(i,j,k \geq 3)$, such that 
\begin{equation}\begin{aligned}\label{3.4} 
& h(e_1,e_1)=\g Je_1, \ \ 
  h(e_1,e_2)=(n\lambda-\g)Je_2, \\ 
& h(e_2,e_2)=(n\lambda-\g)Je_1 +n \mu Je_2, \\
& h(e_{1},e_{i})=\lambda Je_{i}, \ \ h(e_{2},e_{i})=\mu Je_{i}, \\
& h(e_{i},e_{j})= \delta_{ij}(\lambda Je_{1}+\mu J e_{2})+\sum_{k=3}^{n} h^{k}_{ij}Je_{k}
\end{aligned}\end{equation} 
for $i,j \geq 3$. The numbers $h^{k}_{ij}$ are symmetric in the three indices and satisfy 
$h^{k}_{33} + \ldots + h^k_{nn}=0$ for any $k \geq 3$. Moreover,
\begin{align}
& \gamma \geq 0, \ \gamma \geq \frac {2n}{3} \lambda, \label{3.4i} \\
& \mbox{if } \g=0, \mbox{ then also } \lambda=\mu=0, \\
& \mbox{if } \g>0, \mbox{ then also } \gamma > \frac n2 \lambda.
\end{align}
\end{lemma}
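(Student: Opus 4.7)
The plan is to derive \e{3.4} and the accompanying inequalities directly from Theorem~\ref{T:1.4}, supplemented by an $SO(2)$-rotation in the plane $L_1:=\mathrm{span}(e_1,e_2)$ and a sign normalization.

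First, I observe that \e{1.9}--\e{1.11} are invariant under rotations of $e_1,e_2$: \e{1.9} together with the $h^k_{12}=0$ part of \e{1.11} says that the entire $2\times 2$ block $(h^k_{AB})_{A,B\in\{1,2\}}$ vanishes for every $k\geq 3$, so it stays zero under a rotation on the lower indices, and condition \e{1.10} is an $SO(2)$-invariant trace identity. Using this freedom I kill $h^2_{11}$: writing $h^2_{11}$ in the rotated frame $e_1(\theta)=\cos\theta\, e_1+\sin\theta\, e_2$, $e_2(\theta)=-\sin\theta\, e_1+\cos\theta\, e_2$ yields a homogeneous cubic in $(\cos\theta,\sin\theta)$, and every real binary cubic has a zero, so an admissible $\theta$ exists. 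By total symmetry of the cubic form, simultaneously $h^1_{12}=0$.

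Setting $\gamma:=h^1_{11}$, $\lambda:=h^1_{33}$ (the common value in \e{1.10}), and $\mu:=h^2_{33}$, the six identities in \e{3.4} are an immediate unpacking of \e{1.9}--\e{1.11} combined with the total symmetry $h^k_{ij}=h^i_{jk}=h^j_{ki}$. For instance, $h(e_2,e_2)=h^1_{22}Je_1+h^2_{22}Je_2=(n\lambda-\gamma)Je_1+n\mu Je_2$ from \e{1.10}; $h(e_1,e_i)=\lambda Je_i$ for $i\geq 3$ because $h^1_{1i}=h^i_{11}=0$, $h^2_{1i}=h^i_{12}=0$, and $h^k_{1i}=h^1_{ki}=\lambda\delta_{ki}$; and $h^k_{33}+\cdots+h^k_{nn}=0$ is the third clause of \e{1.9}. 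Replacing $e_1$ by $-e_1$ preserves the form of \e{3.4} but flips both $\gamma$ and $\lambda$ while fixing $\mu$, so I can enforce $\gamma\geq 0$.

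For the size inequalities I exploit that, under the equality assumption in \e{L23}, the splitting $\mathrm{span}(e_1,e_2)\oplus\mathrm{span}(e_3,\ldots,e_n)$ realizes the infimum in definition \e{1.3} of $\delta(2,n-2)(p)$, so $\tau(L_1)+\tau(L_2)\leq\tau(L_1')+\tau(L_2')$ for every orthogonal competitor pair of dimensions $2$ and $n-2$. Evaluating such a comparison via the Gauss equation on a competitor obtained by a finite rotation swapping $e_2$ with a direction in $\mathrm{span}(e_3,\ldots,e_n)$, and using \e{3.4}, should distill the bound $\gamma\geq\frac{2n}{3}\lambda$. The strict inequality in case $\gamma>0$ then follows automatically, since $\frac{2n}{3}>\frac{n}{2}$: if $\lambda\leq 0$ then $\gamma>0\geq\frac{n}{2}\lambda$, while if $\lambda>0$ then $\gamma\geq\frac{2n}{3}\lambda>\frac{n}{2}\lambda$. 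In the degenerate case $\gamma=0$ the bound yields $\lambda\leq 0$; the reverse inequality $\lambda\geq 0$ should come from the same comparison after exploiting the residual rotational freedom available when $\gamma=0$, forcing $\lambda=0$, and then $\mu=0$ by the analogous minimality argument with $e_1,e_2$ interchanged. The main obstacle is extracting $\gamma\geq\frac{2n}{3}\lambda$: naive infinitesimal variations of $(L_1,L_2)$ tend to collapse to automatically-satisfied sums of squares, so the competitor pair must be chosen at a finite (nonzero) angle in order for the minimality condition to produce a genuinely informative inequality.
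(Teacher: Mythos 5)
Your construction of the frame and the algebraic unpacking of \eqref{1.9}--\eqref{1.11} into \eqref{3.4} is fine, as is the sign normalization giving $\gamma\geq 0$, but the heart of the lemma --- the inequality $3\gamma\geq 2n\lambda$ in \eqref{3.4i} and the implication $\gamma=0\Rightarrow\lambda=\mu=0$ --- is not actually proved. In the paper these come from choosing $e_1$ to be a \emph{maximizer} of $\phi(u)=\langle h(u,u),Ju\rangle$ on the unit circle of $\mathrm{span}\{e_1,e_2\}$: the first-derivative condition gives $h^2_{11}=0$, the second-derivative condition gives exactly $h^1_{11}\geq 2h^1_{22}$, which by the trace relation \eqref{1.10} is $3\gamma\geq 2n\lambda$, oddness of $\phi$ gives $\gamma\geq 0$, and $\gamma=0$ forces $\phi\equiv 0$, hence all four components $h^1_{11},h^2_{11},h^1_{22},h^2_{22}$ vanish and $\lambda=\mu=0$ again by \eqref{1.10}. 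You instead take $\theta$ to be an \emph{arbitrary} root of the binary cubic $h^2_{11}(\theta)=0$; this is precisely the condition that $(\cos\theta)e_1+(\sin\theta)e_2$ be a critical point of $\phi$, but an arbitrary critical point carries no second-order information. The defect is real: there exist cubic forms on a plane (e.g.\ $F(\theta)=A\cos(\theta-\pi/3)+\cos 3\theta$ with $1<A<9$) having a local minimum with strictly positive value; at such a critical point your normalization $\gamma\geq 0$ is satisfied while the second-derivative test gives $\gamma\leq 2h^1_{22}$, so $3\gamma\geq 2n\lambda$ can fail for your frame. Hence your construction by itself does not produce a basis with the stated properties.

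The proposed rescue --- that minimality of $\tau(L_1)+\tau(L_2)$ in \eqref{1.3}, tested against finite-angle competitors via the Gauss equation, ``should distill'' the bound --- is only a sketch and carries two unaddressed burdens. First, that $\mathrm{span}\{e_1,e_2\}\oplus\mathrm{span}\{e_3,\ldots,e_n\}$ attains the infimum in the definition of $\delta(2,n\hskip-.01in-\hskip-.01in 2)$ at an equality point is not part of the statement of Theorem \ref{T:1.4}; it is plausible and verifiable from \eqref{3.4} by a Gauss-equation computation, but you do not do it. Second, and more seriously, the comparison inequalities one gets this way are quadratic expressions in the components of $h$ (the constant-curvature contributions cancel between competitor pairs of the same dimensions), whereas the target $3\gamma\geq 2n\lambda$ is a first-order (cubic-form) statement that in the paper comes from the second-derivative test at the maximum, not from minimality of the subspaces at all; no computation is offered showing the quadratic comparisons imply it, and the example above shows your frame choice leaves genuine room for failure that this step would have to close. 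The same criticism applies to the $\gamma=0$ case, which you also leave at ``should come''. The natural repair is simply to choose $e_1$ as the maximizer of $\phi$ rather than an arbitrary root of the cubic --- which is exactly the paper's proof.
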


\begin{proof}
Choose an orthonormal basis $\{e_1,\ldots,e_n\}$ of $T_pM$ such that \eqref{1.9}--\eqref{1.11} hold. This implies that
\begin{equation*}
\begin{aligned} 
& h(e_1,e_1) = h_{11}^1 Je_1 + h_{11}^2 Je_2, \\ 
& h(e_1,e_2) = h_{11}^2 Je_1 + h_{22}^1 Je_2, \\ 
& h(e_2,e_2) = h_{22}^1 Je_1 + h_{22}^2 Je_2, \\
& h(e_1,e_k) = h_{33}^1 Je_k, \quad h(e_2,e_k) = h_{33}^2 Je_k, \\
& h(e_k,e_{\ell} )= \delta_{k\ell}(h_{33}^1 Je_1 + h_{33}^2 Je_2) + \sum_{m=3}^{n} h_{k\ell}^m Je_m,
\end{aligned}
\end{equation*} 
with
\begin{equation*}
\begin{aligned} 
& h_{11}^1 + h_{22}^1 = nh_{33}^1 \ (= nh_{44}^1 = \ldots = nh_{nn}^1), \\ 
& h_{11}^2 + h_{22}^2 = nh_{33}^2 \ (= nh_{44}^2 = \ldots = nh_{nn}^2), \\
& h_{33}^k + \ldots + h_{nn}^k = 0 \mbox{ for } k \geq 3.
\end{aligned}
\end{equation*} 

Remark that the conditions \eqref{1.9}--\eqref{1.11} remain true for any choice of orthonormal basis in $\mathrm{span}\{e_1,e_2\}$. In particular, we can assume that the following function, defined on a compact set, attains its global maximum in $e_1$:
$$ \phi: \{ u \in \mathrm{span}\{e_1,e_2\} \ | \ \|u\|=1 \} \to \mathbb R : u \mapsto \langle h(u,u),Ju \rangle. $$
This implies that the function $F : \mathbb R \to \mathbb R: \theta \mapsto \phi((\cos\theta) e_1 + (\sin\theta)e_2)$ attains a maximum at $\theta=0$. Computing the first and second derivatives of $F$ gives respectively
$h_{11}^2 = 0$ and $h_{11}^1 \geq 2 h_{22}^1$. Since $\phi(-e_1)=-\phi(e_1)$ and $\phi$ attains its maximum at $e_1$, we have $\phi(e_1)=h_{11}^1 \geq 0$. Moreover, if $h_{11}^1 = 0$, then $\phi$ vanishes identically, which implies that also $h_{22}^1 = h_{22}^2 = 0$. Finally, if $h_{11}^1 > 0$, it is easy to see that $h_{11}^1 > h_{22}^1$.

We now obtain the result by putting $\gamma=h_{11}^1$, $\lambda=h_{33}^1$ and $\mu=h_{33}^2$.
\end{proof}

Remark that, under the assumptions of Lemma \ref{L:3.2}, the mean curvature vector at the point $p$ is given by
\begin{align} \label{expressionH}
& H(p) = \frac{2(n-1)}{n}(\lambda Je_1 + \mu Je_2). 
\end{align}

It is not clear whether the orthonormal bases given by Lemma \ref{L:3.2} at every point of a $\delta(2,n-2)$-ideal Lagrangian submanifold of a complex space form can be pasted together to form a differentiable orthonormal frame. However, we have the following local result.

\begin{lemma}\label{L:3.3}
Let $M$ be a $\delta(2,n-2)$-ideal Lagrangian submanifold of a complex space form $\tilde M^{n}(4c)$, $n\geq 5$. Then there exists an open and dense subset $V \subseteq  M$ such that every point of $V$ has a neighborhood in which one of the following holds.
\begin{itemize}
\item[(I)] $H = 0$.
\item[(II)] There exists a differentiable orthonormal frame $\{E_1,\ldots,E_n\}$ such that the second fundamental form satisfies
\begin{equation} \label{h_case(II)}
\begin{aligned}
& h(E_1,E_1)=(n-1)\lambda JE_1, \ h(E_1,E_i) = \lambda JE_i, \\
& h(E_i,E_j) = \delta_{ij} \lambda JE_1 + \sum_{k=2}^n h_{ij}^k JE_k
\end{aligned}
\end{equation}
for $i,j \geq 2$, where $\lambda$ and $h_{ij}^k$ are differentiable functions, the latter being symmetric in the three indices and satisfying $h_{22}^k + \ldots + h_{nn}^k = 0$ for all $k \geq 2$.
\item[(III)] There exists a differentiable orthonormal frame $\{E_1,\ldots,E_n\}$ such that the second fundamental form satisfies
\begin{equation} \label{h_case(III)}
\begin{aligned}
& h(E_1,E_1) = \g JE_1, \ \ 
  h(E_1,E_2) = (n\lambda-\g)JE_2, \\
& h(E_2,E_2) = (n\lambda-\g)JE_1 + n \mu JE_2, \\
& h(E_1,E_i) = \lambda JE_i, \ \ 
  h(E_2,E_i) = \mu JE_i, \\
& h(E_i,E_j) = \delta_{ij}(\lambda JE_1 + \mu J E_2)+\sum_{k=3}^{n} h^{k}_{ij}JE_k
\end{aligned}
\end{equation}
for $i,j \geq 3$, where $\gamma$, $\lambda$, $\mu$ and $h^{k}_{ij}$ are differentiable functions, the latter being symmetric in the three indices, satisfying $\gamma > 0$, $\gamma > 2n\lambda/3$ and $h^{k}_{33} + \ldots + h^k_{nn}=0$ for all $k \geq 3$. Moreover, at every point, $\lambda \neq 0$ or $\mu \neq 0$, and also $\mu \neq 0$ or $\gamma \neq (n-1)\lambda$.
\end{itemize}
\end{lemma}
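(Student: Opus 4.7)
The plan is to split $M$ into open regions and exhibit smooth orthonormal frames realising the three prescribed forms on each region. Let $\Omega_I := \mathrm{int}\{p \in M : H(p)=0\}$, on which case~(I) is immediate. On the open complement $U := \{p : H(p)\neq 0\}$, the field $\tilde e_1 := -JH/|H|$ is a smooth unit tangent vector (since $J$ maps normal to tangent on a Lagrangian), and $s := n|H|/(2(n-1)) = \sqrt{\lambda^2+\mu^2}$ is smooth and positive. Applying Lemma~\ref{L:3.2} pointwise, the self-adjoint operator $A := A_{J\tilde e_1}$, given by $AX = -Jh(X,\tilde e_1)$, preserves the pointwise decomposition $D_1\oplus D_2$ and acts on $D_2$ as $s\,\mathrm{Id}$, so $s$ is an eigenvalue of $A$ with multiplicity at least $n-2$ throughout $U$.

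Split $U$ using the closed vector equation $A\tilde e_1 = (n-1)s\,\tilde e_1$: put
\begin{equation*}
\Omega_{II} := \mathrm{int}\{p \in U : A\tilde e_1 = (n-1)s\,\tilde e_1\}, \qquad \Omega_{III} := \{p \in U : A\tilde e_1 \neq (n-1)s\,\tilde e_1\},
\end{equation*}
both open. Their union omits only the nowhere dense topological boundary of the closed locus, so $V := \Omega_I\cup\Omega_{II}\cup\Omega_{III}$ is open and dense in $M$. A pointwise check using Lemma~\ref{L:3.2} identifies $A\tilde e_1 = (n-1)s\,\tilde e_1$ with the case~(II) configuration $\mu=0$, $\gamma=(n-1)\lambda$; hence on $\Omega_{III}$ one automatically has $\mu\neq 0$ or $\gamma\neq(n-1)\lambda$.

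On $\Omega_{II}$, the trace identity $\mathrm{tr}\,A = n|H| = 2(n-1)s$ together with $A\tilde e_1 = (n-1)s\,\tilde e_1$ forces $A|_{\tilde e_1^\perp} = s\,\mathrm{Id}$. Setting $E_1 := \tilde e_1$ and picking any local smooth orthonormal frame $E_2,\ldots,E_n$ of $\tilde e_1^\perp$, the formula $AX = -Jh(X,\tilde e_1)$ yields $h(E_1,E_1) = (n-1)s\,JE_1$ and $h(E_1,E_i) = s\,JE_i$; symmetry of the cubic form $T(X,Y,Z) := \langle h(X,Y),JZ\rangle$ and the identity $nH = 2(n-1)\lambda\,JE_1$ (with $\lambda:=s$) then produce \eqref{h_case(II)}, including the trace condition $\sum_{i\geq 2}h^k_{ii}=0$. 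On $\Omega_{III}$, where generically $\mathrm{mult}_A(s)=n-2$, the $s$-eigenspace $D_2$ is a smooth rank-$(n-2)$ distribution and $D_1 := D_2^\perp$ is smooth of rank 2. In each fibre of $D_1$, choose $E_1$ as a unit vector $u$ maximising $\phi(u) := \langle h(u,u),Ju\rangle$; along the rotation $u(\theta)=\cos\theta\,e_1+\sin\theta\,e_2$, Lemma~\ref{L:3.2} gives
\begin{equation*}
F(\theta) = \gamma\cos^3\theta + 3(n\lambda-\gamma)\cos\theta\sin^2\theta + n\mu\sin^3\theta,
\end{equation*}
so $F''(0) = 3(2n\lambda-3\gamma) < 0$ by the strict case~(III) inequality $\gamma > 2n\lambda/3$. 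The maximum is non-degenerate, and the implicit function theorem delivers a smooth choice of $E_1$ (up to sign) on a neighbourhood of each point. Complete with $E_2 \in D_1\cap E_1^\perp$ and any local smooth orthonormal frame $E_3,\ldots,E_n$ of $D_2$; then \eqref{h_case(III)} follows by direct expansion. On the residual part of $\Omega_{III}$ where $\mathrm{mult}_A(s)=n-1$ (a subcase that requires $\mu\neq 0$), one replaces $D_2^\perp$ with $\mathrm{span}(\tilde e_1,u_{(n-1)s})$, using that the $(n-1)s$-eigenspace of $A$ is smooth one-dimensional and that $\tilde e_1$ is not its eigenvector on $\Omega_{III}$, after which the maximisation is identical.

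The main technical point is the smoothness of the maximiser $E_1$ on $\Omega_{III}$: on a nowhere dense subset the maximum of $\phi|_{D_1}$ may be attained in two distinct directions (modulo sign), so the global argmax can jump between them. However, each such local maximum is non-degenerate and therefore extends smoothly by the implicit function theorem, and any local maximum still satisfies the case~(III) conditions of Lemma~\ref{L:3.2}, so following one of them locally yields a valid smooth frame. A secondary point is the consistency of the $u_{(n-1)s}$-based construction of $D_1$ on the multiplicity-$(n-1)$ portion of $\Omega_{III}$, which rests on the fact that on this set $\tilde e_1$ lies in $D_1$ but avoids the $(n-1)s$-eigenspace.
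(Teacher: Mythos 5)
Your overall strategy mirrors the paper's: split off the interior of $\{H=0\}$, isolate the case-(II) locus by a pointwise condition equivalent to $\mu=0$, $\gamma=(n-1)\lambda$ (your single vector equation $A\tilde e_1=(n-1)s\,\tilde e_1$ plays the role of the paper's two conditions on $h(JH,JH)$ and on the operator $K$, and the equivalence does check out), and on the remaining open set build $\mathcal D_1,\mathcal D_2$ smoothly and pick $E_1$ by maximizing $\phi(u)=\langle h(u,u),Ju\rangle$ on $\mathcal D_1$. The genuine gap is in the last step. You justify nondegeneracy of the maximum by $F''(0)=3(2n\lambda-3\gamma)<0$ ``by the strict case~(III) inequality $\gamma>2n\lambda/3$'', but that strict inequality is part of the conclusion you are proving; Lemma \ref{L:3.2} only yields $\gamma\geq 2n\lambda/3$ (together with $\gamma>n\lambda/2$ when $\gamma>0$) at a global maximizer. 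The configuration $\mu=0$, $\gamma=\tfrac{2n}{3}\lambda>0$ is pointwise compatible with Lemma \ref{L:3.2} (one checks that $e_1$ is then still the global maximizer, with $F''(0)=F'''(0)=0$ and $F(\theta)\approx\gamma-\tfrac38\gamma\,\theta^4$), and such a point lies in your $\Omega_{III}$ since $\tfrac{2n}{3}\neq n-1$ for $n\geq5$. At such points your implicit-function-theorem selection of $E_1$ breaks down, the strictness $\gamma>2n\lambda/3$ in case (III) is never derived, and the subsidiary claim that ``any local maximum still satisfies the case~(III) conditions of Lemma \ref{L:3.2}'' is also unjustified, since the sign conditions $\gamma>0$ and $\gamma>n\lambda/2$ in that lemma are consequences of \emph{global} maximality.

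This matters for the statement itself, because your final set is declared to be $V=\Omega_I\cup\Omega_{II}\cup\Omega_{III}$: on the degenerate-maximum locus inside $\Omega_{III}$ (and likewise on the boundary of the stratum where $\operatorname{mult}_A(s)=n-1$, which is closed, not open, so your eigenvector construction only applies on its interior) no smooth frame is produced, and you have not shown these exceptional sets are nowhere dense, so openness and density of $V$ are not secured by your argument as written. The paper copes with exactly this difficulty differently: it does not argue via nondegeneracy at all, but shrinks once more to the subset $V_{111}$ of points at which the number of solutions $\theta\in[0,2\pi)$ of $\langle h(E_1,E_1),JE_2\rangle=0$ is locally constant, and constructs the frame only there. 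To repair your proof you would either have to adopt such a further restriction (and adjust the definition of $V$ accordingly), or give an independent argument excluding the locus $\mu=0$, $\gamma=\tfrac{2n}{3}\lambda$ (e.g.\ showing it has empty interior), neither of which is currently present.
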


\begin{proof}
Define $V_1 = \{ p \in M \ | \ H(p) \neq 0 \}$ and $V_2 = \{ p \in M \ | \ H(p)=0 \}^{\mbox{int}}$, where the superscript ``int'' denotes the interior. Clearly, all points in $V_2$ satisfy case (I). 

On $V_1$, we consider the $(1,1)$-tensor field 
\begin{equation} \label{defK}
K : \mathcal D \to \mathcal D : X \mapsto \pi_{\mathcal D} J h(JH,X), 
\end{equation}
where $\mathcal D$ is the orthogonal complement of $\mbox{span}\{JH\}$ in the tangent space to $M$ and $\pi_{\mathcal D}$ is the orthogonal projection onto $\mathcal D$ at every point. Define further
\begin{align*}
& V_{11} = \{ p \in V_1 \ | \ h(JH(p),JH(p)) \mbox{ is no multiple of } H(p) \mbox{ or } K_p \mbox{ is no multiple of } \mathrm{id}_{{\mathcal D}_p} \}, \\
& V_{12} = \{ p \in V_1 \ | \ h(JH(p),JH(p)) \mbox{ is a multiple of } H(p) \mbox{ and } K_p \mbox{ is a multiple of } \mathrm{id}_{{\mathcal D}_p} \}^{\mbox{int}}.
\end{align*}
If $p \in V_{12}$ and $\{e_1,\ldots,e_n\}$ is an orthonormal basis of $T_pM$ as in Lemma \ref{L:3.2}, it follows from the definition of $V_{12}$ and a straightforward computation using \eqref{3.4}, \eqref{expressionH} and \eqref{defK} that $\gamma=(n-1)\lambda$ and $\mu=0$. In particular, $e_1$ lies in the direction of $H(p)$. This means that we can extend $\{e_1,\ldots,e_n\}$ to an orthonormal frame $\{E_1,\ldots,E_n\}$ on $V_{12}$, where $E_1$ lies in the direction of $H$ at every point, and we are in case (II).

Finally, let $p \in V_{11}$ and consider an orthonormal basis $\{e_1,\ldots,e_n\}$ of $T_pM$ as in Lemma~\ref{L:3.2}. Putting $(\mathcal D_1)_p = \mathrm{span}\{e_1,e_2\}$ and $(\mathcal D_2)_p = \mathrm{span}\{e_3,\ldots,e_n\}$, we shall now prove that $\mathcal D_1$ and $\mathcal D_2$ are differentiable distributions on $V_{11}$. If $h(JH(p),JH(p))$ is not parallel with $H(p)$, then the same holds in a neighborhood of $p$ and it follows from \eqref{3.4} and \eqref{expressionH} that $\mathcal D_1 = \mathrm{span}\{JH,Jh(JH,JH)\}$ in this neighborhood. Hence, $\mathcal D_1$ is differentiable in this neighborhood. If, on the other hand, $h(JH(p),JH(p))$ and $H(p)$ are parallel, then, by the definition of $V_{11}$, we have that $K_p$ is not a multiple of $\mathrm{id}_{\mathcal D_p}$ and it follows from \eqref{3.4} and \eqref{defK} that the matrix of $K_p$ with respect to the orthonormal basis $\left\{ (\mu e_1 - \lambda e_2)/\sqrt{\lambda^2+\mu^2},e_3,\ldots,e_n \right\}$ of $\mathcal D_p$, is given by
$$ \frac{2(n-1)}{n} \left( \begin{array}{cccc}
\alpha & & & \\ & \lambda^2 + \mu^2 & & \\ & & \ddots & \\ & & & \lambda^2 + \mu^2
\end{array} \right)$$
for some real number $\alpha \neq \lambda^2 + \mu^2$. The same holds in a neighborhood of $p$ and hence there is a well-defined one-dimensional eigendistribution of the tensor field $K$, say $\mathrm{span}\{X_0\}$. Since $K$ is differentiable, the vector field $X_0$ can be chosen to be differentiable and hence $\mathcal D_1 = \mathrm{span}\{JH,X\}$ is differentiable in a neighborhood of $p$. In both cases, $\mathcal D_2$ is differentiable since it is the orthogonal complement of $\mathcal D_1$ in $TM$.

Let $\{X_1,X_2\}$ be differentiable orthonormal vector fields on $V_{11}$ spanning $\mathcal D_1$ at every point and $\{E_3,\ldots,E_n\}$ differentiable orthonormal vector fields on $V_{11}$ spanning $\mathcal D_2$ at every point. In order to obtain case (III) of the lemma, we have to find a differentiable function $\theta$ on $V_{11}$ such that $E_1 = (\cos\theta) X_1 + (\sin\theta) X_2$ maximizes
$ \phi: \{ X \in \mathcal D_1 \ | \ \|X\|=1 \} \to \mathbb R : X \mapsto \langle h(X,X),JX \rangle $
at every point. This implies that $E_2 = -(\sin\theta) X_1 + (\cos\theta) X_2$ satisfies 
\begin{equation}\label{3.5bis}
\langle h(E_1,E_1),JE_2 \rangle = 0.
\end{equation} 
The latter equation has in general several differentiable solutions for $\theta$. However, since we want $E_1 = (\cos\theta) X_1 + (\sin\theta) X_2$ to maximize $\phi$, we have to restrict to points for which the number of solutions, say in $[0,2\pi)$, does not change in a neighborhood to guarantee differentiability of $\theta$. If we define $V_{111}$ as the set of those points in $V_{11}$ for which the number of solutions for $\theta$ of \eqref{3.5bis} in $[0,2\pi)$ does not change in a neighborhood of the point, we can construct an orthonormal frame on $V_{111}$ satisfying \eqref{h_case(III)} as explained above. Remark that $\gamma > 0$ and $\gamma > 2n\lambda/3$ follow from the last sentence of Lemma~\ref{L:3.2} and the fact that $H$ is nowhere vanishing on $V_{111}$. Moreover, the fact that $\lambda \neq 0$ or $\mu \neq 0$ also follows from the non-vanishing of $H$ and the fact that $\mu \neq 0$ or $\gamma \neq (n-1)\lambda$ follows from the definition of $V_{11}$ and the computation which led to case (II) above.

As a conclusion, the subset $V \subseteq M$ we are looking for is the disjoint union
$$ V = V_{111} \cup V_{12} \cup V_2, $$
which is open and dense in $M$ by construction. 
\end{proof}

We will proceed with the classification as follows. In Section 4, we give a classification in case (II), based on results in \cite{cvv}. In Section 5, we give a classification in case (III) and, finally, Section 6 contains the overall conclusions. We will not elaborate on case (I) in general, however, we remark the following.

\begin{remark} \label{RemarkMinimal}
If $M$ is a minimal $\delta(2,n-2)$-ideal Lagrangian submanifold of a complex space form $\tilde M^{n}(4c)$, $n\geq 5$, for which the orthonormal bases given in Lemma \ref{L:3.2} can be pasted together to form a differentiable orthonormal frame $\{E_1,\ldots,E_n\}$, then the second fundemental form is given by 
\begin{equation}
\begin{aligned}
\notag & h(E_1,E_1)=\g JE_1,\; h(E_1,E_2)=-\g JE_2, \; h(E_2,E_2)=-\g JE_1,
\\& h(E_{1},E_{i})= h(E_{2},E_{i})=0,
\; h(E_{i},E_{j})=\sum_{k=3}^{n} h^{k}_{ij}JE_{k}
\end{aligned}
\end{equation} 
for $i,j,k\geq 3$ and some functions $\g$, $\lambda$, $\mu$ and $h^{k}_{ij}$, satisfying $h^{k}_{33}+\ldots h_{nn}^k=0$ for every $k \geq 3$. If $\gamma = 0$, the Lagrangian submanifold is minimal $\delta(n-2)$-ideal. If $\gamma > 0$, a long argument, very similar to the one we will give in Section \ref{sec5.1}, can be used to prove that there are three possibilities: the Lagrangian submanifold is either minimal $\delta(2)$-ideal, minimal $\delta(2,k)$-ideal for some $k$ satisfying $2\leq k< n-2$ or it is a direct product of a minimal $\delta(2)$-ideal Lagrangian surface in $\mathbf C^2$ and a minimal $\delta(n-2)$-ideal submanifold of $\mathbf C^{n-2}$. The latter case only occurs for $c=0$. The family of minimal $\delta(2)$-ideal Lagrangians is too large to classify. On the other hand, minimal $\delta(2,2)$-ideal Lagrangians in dimension $5$ were classified in \cite{cpw}.
\end{remark}

\section{Classification in case (II) of Lemma \ref{L:3.3}}

Let $M$ be a Lagrangian submanifold of a complex space form $\tilde M^{n}(4c)$, $n\geq 5$, satisfying case (II) of Lemma \ref{L:3.3}. It was proven in \cite{cvv} that such a submanifold is a warped product $I \times_f N$ of an open interval $I$ and an $(n-1)$-dimensional factor $N$. Moreover, $E_1$ is tangent to $I$ and the Lagrangian immersion is constructed from a curve depending on a parameter $t \in I$, determined by a system of ODEs and a Lagrangian immersion of the manifold $N$, for which the components of the second fundamental form are, up to a factor depending on $t$, equal to the corresponding components of $h$.

Combining this result with Lemma \ref{L:3.2} yields that there exists an orthonormal basis $\{e_2,\ldots,e_n\}$ for every tangent space to $N$ such that the components of the second fundamental form $\tilde h$ of the Lagrangian immersion of $N$ satisfy $\tilde h^2_{k \ell} = 0$ for all $k, \ell \geq 2$ and $\tilde h_{22}^k+\ldots+\tilde h_{nn}^k=0$ for all $k \geq 2$. This means exactly that the Lagrangian immersion is $\delta(n-2)$-ideal and minimal. Hence, we obtain the following results (remark the slight difference in notation compared to \cite{cvv}).

\begin{proposition} \label{P:4.1} 
Let $M$ be a $\delta(2,n-2)$-ideal Lagrangian submanifold of the complex Euclidean space ${\bf C}^{n}$ $(n\geq 5)$ whose second fundamental form is given by case $\mathrm{(II)}$ of Lemma \ref{L:3.3}. Then $M$ is locally congruent to the image of
\begin{equation}\label{4.26} 
L(t, u_{2},\ldots, u_{n})=\frac{e^{i\theta}}{\varphi+i\lambda}\Phi( u_{2},\ldots,u_{n}),  
\end{equation} 
where $\theta$, $\varphi$ and $\lambda$ are functions of $t$ only, satisfying
\begin{equation} \label{systemODE}
\lambda' = (n-3) \lambda \varphi, \quad \varphi' = - \varphi^2 - (n-2)\lambda^2, \quad \theta' = (n-1) \lambda
\end{equation}
and $\Phi$ is a Legendre immersion into $S^{2n-1}(1)\subset {\bf C}^n$ whose composition with the Hopf fibration is a minimal $\delta(n-2)$-ideal Lagrangian immersion into $CP^{n-1}(4)$.
\end{proposition}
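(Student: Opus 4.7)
My plan is to combine two inputs: the warped-product structure theorem from \cite{cvv}, which is tailor-made for Lagrangian submanifolds whose second fundamental form takes the form \eqref{h_case(II)}, and Lemma \ref{L:3.2} applied to the cross-section. The defining relations of case (II) say that $E_1$ is a distinguished direction in which the cubic form $\langle h(\cdot,\cdot),J\cdot\rangle$ has eigenvalue $(n-1)\lambda$, and that the orthogonal complement $\mathrm{span}\{E_2,\ldots,E_n\}$ is totally umbilical with mean curvature vector parallel to $JE_1$. From the Codazzi equation one verifies that $E_1$ is autoparallel and that this umbilical distribution is spherical, so the theorem of \cite{cvv} produces a warped-product decomposition $M = I \times_f N$ with $E_1 = \partial/\partial t$, and an explicit parametrization of the immersion of the form $L(t,u) = \alpha(t)\Phi(u)$, where $\Phi : N \to S^{2n-1}(1) \subset \mathbb{C}^n$ is a Legendre immersion and $\alpha : I \to \mathbb{C}^\times$ is a complex-valued function.

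\textbf{The ODE system.} Substituting the ansatz $L(t,u)=\alpha(t)\Phi(u)$ into the equations of Gauss and Codazzi and matching the resulting second fundamental form with \eqref{h_case(II)} produces three conditions on $\alpha$. The unit-speed condition $|\alpha'|=1$ (so that $\partial_t L$ has unit length), the Lagrangian condition at each fixed $t$ (forcing the imaginary part of $\alpha'/\alpha$ to be controlled by $\lambda$), and the requirement that $A_{JE_1}E_1 = -(n-1)\lambda E_1$ together imply $\alpha'/\alpha = -\varphi + i(n-1)\lambda$ for suitable real-valued functions $\varphi,\lambda$ of $t$. Integration then yields precisely $\alpha(t) = e^{i\theta(t)}/(\varphi(t)+i\lambda(t))$ with $\theta' = (n-1)\lambda$, which is \eqref{4.26}. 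The remaining relations $\varphi' = -\varphi^2 - (n-2)\lambda^2$ and $\lambda' = (n-3)\lambda\varphi$ come from the Codazzi identity applied to $(E_1,E_i,E_j)$ with $i,j \geq 2$ and to $(E_1,E_1,E_i)$ respectively, once the connection coefficients of the warped metric are read off using \eqref{h_case(II)}.

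\textbf{Identification of $\Phi$.} It remains to show that $\Phi$, composed with the Hopf fibration $\pi : S^{2n-1} \to CP^{n-1}(4)$, is a minimal $\delta(n-2)$-ideal Lagrangian immersion. The tangential part of the second fundamental form of $M$ in the directions $E_2,\ldots,E_n$ is, by \eqref{h_case(II)}, of the form $\sum_{k=2}^n h_{ij}^k J E_k$ with $h_{ij}^k$ totally symmetric and trace-free $\sum_{i=2}^n h_{ii}^k = 0$ for every $k$. Via the warped-product formulas of \cite{cvv}, this component equals (up to a positive $t$-dependent conformal factor) the second fundamental form of the Lagrangian immersion $\pi\circ\Phi : N \to CP^{n-1}(4)$. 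Comparing with Lemma \ref{L:3.2} in dimension $n-1 \geq 4$ identifies this as precisely the second fundamental form of a $\delta(n-2)$-ideal immersion with $\gamma=\lambda=\mu=0$, which by \eqref{expressionH} is minimal.

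\textbf{Main obstacle.} The overall structure is essentially dictated by \cite{cvv}, so the real work lies in the careful bookkeeping needed to extract the precise numerical coefficients $(n-3)$, $(n-2)$, $(n-1)$ appearing in \eqref{systemODE}, and in choosing an arclength parametrization of $t$ together with an initial phase of $\theta$ so that $\alpha'/\alpha$ has real part $-\varphi$ and imaginary part $(n-1)\lambda$. Once this normalization is fixed, the derivation is a direct comparison of the Codazzi equation with \eqref{h_case(II)} and the identification step is immediate.
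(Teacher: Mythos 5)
Your overall route coincides with the paper's: both reduce the statement to the warped-product/parametrization result of \cite{cvv} for second fundamental forms of type \eqref{h_case(II)} and then identify the fibre factor, so the strategy is sound. However, two points need repair. First, your ODE bookkeeping is internally inconsistent: if $L=\alpha(t)\Phi$ with $\alpha=e^{i\theta}/(\varphi+i\lambda)$ and \eqref{systemODE} holds, then $\alpha'/\alpha=\varphi+i\lambda$ (so that $|\alpha'|=1$) and $\alpha''/\alpha'=i(n-1)\lambda$, the latter being the relation that encodes $\nabla_{E_1}E_1=0$ and $h(E_1,E_1)=(n-1)\lambda JE_1$; your claim $\alpha'/\alpha=-\varphi+i(n-1)\lambda$ is incompatible with the unit-speed condition and does not integrate to \eqref{4.26}. (Also, with the paper's convention $A_{JX}Y=-Jh(X,Y)$ one gets $A_{JE_1}E_1=+(n-1)\lambda E_1$.) Since the paper simply quotes \cite{cvv} for the parametrization and for \eqref{systemODE}, this is only a slip in the part you chose to re-derive, but as written it would fail.

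Second, and more substantively, the identification of $\Phi$ cannot be done by ``comparing with Lemma \ref{L:3.2} in dimension $n-1$'': that lemma concerns the invariant $\delta(2,m-2)$ (two distinguished blocks) and only gives the implication from ideality to the shape of $h$. What you actually obtain for the fibre is an $(n-1)$-dimensional Lagrangian of $CP^{n-1}(4)$ admitting at each point a single direction $e_2$ with $\tilde h(e_2,\cdot)=0$, together with tracelessness of $\tilde h$; this is not ``the Lemma \ref{L:3.2} form with $\gamma=\lambda=\mu=0$'', which would require two directions annihilated by $h$. The correct tool is the equality characterization of the single invariant $\delta(n-2)$ from \cite{cdvv}, used in the converse direction (this shape of the second fundamental form forces equality in the $\delta(n-2)$ inequality and $H=0$), which is exactly how the paper concludes that $\pi\circ\Phi$ is a minimal $\delta(n-2)$-ideal Lagrangian immersion into $CP^{n-1}(4)$.
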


Remark that the system \eqref{systemODE} allows us to express all three unknown functions in terms of $\lambda$. Recall that $\lambda > 0$. It follows from the equations that $\lambda^{\frac{2}{n-3}}(\lambda^2 + \varphi^2)$ is a positive constant, say $r^2$ for some $r>0$. Then $\varphi = \pm \sqrt{r^2 \lambda^{-\frac{2}{n-3}}-\lambda^2}$. 
After replacing $E_1$ by $-E_1$ if necessary, we may assume that $\varphi > 0$ and thus
\begin{equation} \label{phi}
\varphi = \sqrt{\frac{1}{c^2 \, \lambda^{\frac{2}{n-3}}}-\lambda^2},
\end{equation}
where we have put $c = 1/r$. Since
$$ \frac{d\theta}{d\lambda} = \frac{(n-1) \lambda}{(n-3) \lambda \varphi} = \frac{n-1}{n-3} \, \frac{1}{\varphi}, $$
direct integration using \eqref{phi} yields
\begin{equation} \label{theta}
\theta = \frac{n-1}{n-2}\arcsin\left(c \, \lambda^{\frac{n-2}{n-3}} \right). 
\end{equation}
After a reparametrization $t \mapsto \lambda(t)$, the coefficient in front of $\Phi$ in \eqref{4.26} is completely determined by \eqref{phi} and \eqref{theta}.

\begin{proposition}\label{P:4.2} 
Let $M$ be a $\delta(2,n-2)$-ideal Lagrangian submanifold of the complex projective space  $CP^{n}(4)$ $(n\geq 5)$ whose second fundamental form is given by case $\mathrm{(II)}$ of Lemma \ref{L:3.3}. Then $M$ is locally congruent to the image of $\pi\circ L$, where $\pi:S^{2n+1}(1)\to CP^{n}(4)$ is the Hopf fibration and
\begin{equation}\label{4.43}  
L(t, u_{2},\ldots,u_{n})=\(\frac{e^{i \theta}\Phi(u_{2},\ldots,u_{n})}{\sqrt{1+\lambda^2+\varphi^2}}, \frac{ e^{i(n-2)\theta}(i\lambda-\varphi)}{\sqrt{1+\lambda^2+\varphi^2}}\),  
\end{equation}
where $\theta$, $\varphi$ and $\lambda$ are functions of $t$ only, satisfying
\begin{equation} \label{4.44} 
\lambda'=(n-3)\lambda\varphi, \quad \varphi'=-1-\varphi^2-(n-2)\lambda^2, \quad \theta'=\lambda
\end{equation}
and $\Phi$ is a Legendre immersion into $S^{2n-1}(1) \subset {\bf C}^n$ whose composition with the Hopf fibration is a minimal $\delta(n-2)$-ideal Lagrangian immersion into $CP^{n-1}(4)$.
\end{proposition}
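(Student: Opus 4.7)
My plan mirrors the strategy behind Proposition \ref{P:4.1}, but carried out on a horizontal Legendre lift to $S^{2n+1}(1)\subset{\bf C}^{n+1}$ via the Hopf fibration, the result then being projected back to $CP^n(4)$.

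First, I would apply the warped-product structure theorem from \cite{cvv} already invoked at the start of Section 4. This gives $M = I\times_f N$ with $E_1$ tangent to the factor $I$, and the restriction of $L$ to each slice $\{t\}\times N$ differing from a single fixed Lagrangian immersion $\Psi\colon N\to CP^{n-1}(4)$ only by a time-dependent conformal change. Combining this with the conditions in case (II) of Lemma \ref{L:3.3} --- namely $h^1_{k\ell}=0$ for $k,\ell\ge 2$ together with the trace relations $h^k_{22}+\cdots+h^k_{nn}=0$ --- one sees that $\Psi$ is minimal and $\delta(n-2)$-ideal. Choose a horizontal Legendre lift $\Phi\colon N\to S^{2n-1}(1)\subset{\bf C}^n$ of $\Psi$ and a local horizontal Legendre lift $\tilde L\colon M\to S^{2n+1}(1)\subset{\bf C}^{n+1}$ of $L$.

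Second, I would use the warped-product rigidity together with the splitting ${\bf C}^{n+1}={\bf C}^n\oplus{\bf C}$ to show that $\tilde L$ takes the form
\begin{equation*}
\tilde L(t,u_2,\ldots,u_n) = A(t)\,\Phi(u_2,\ldots,u_n) + B(t)\,v_0,
\end{equation*}
where $v_0$ is a fixed unit vector spanning the auxiliary ${\bf C}$-factor and $A,B\colon I\to{\bf C}$ are smooth. The normalization $\|\tilde L\|^2=1$ forces $|A|^2+|B|^2=1$, while horizontality $\langle \tilde L_*E_1, i\tilde L\rangle=0$ provides a further real constraint, so that $(A,B)$ is governed by three real functions, which I will call $\theta,\varphi,\lambda$.

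Third, differentiating $\tilde L$ twice along the $t$-direction, using the formula \eqref{2.9} with $\varepsilon=1$, and substituting the prescribed coefficients $h(E_1,E_1)=(n-1)\lambda JE_1$ and $h(E_1,E_i)=\lambda JE_i$ from Lemma \ref{L:3.3}, I obtain a coupled ODE system for $A$ and $B$. The ansatz
\begin{equation*}
A(t) = \frac{e^{i\theta(t)}}{\sqrt{1+\lambda(t)^2+\varphi(t)^2}}, \qquad B(t) = \frac{e^{i(n-2)\theta(t)}(i\lambda(t)-\varphi(t))}{\sqrt{1+\lambda(t)^2+\varphi(t)^2}}
\end{equation*}
reduces this system precisely to \eqref{4.44}. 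The converse direction is a direct verification: any $\tilde L$ of the above form with $\theta,\varphi,\lambda$ satisfying \eqref{4.44} produces, after projection by $\pi$, a Lagrangian immersion whose second fundamental form is of case (II) type.

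The main obstacle I foresee is the separation-of-variables step in the second paragraph --- justifying that $\tilde L$ genuinely lies (at every $t$) in the fixed affine complex hyperplane spanned by $\Phi(N)$ and $v_0$, and that the $t$-transverse component is one-dimensional in ${\bf C}^{n+1}$. This uses the precise coefficient structure $h(E_1,E_i)=\lambda JE_i$ together with horizontality of the lift to argue that the motion of the slice is proportional to a single fixed vector independent of $u$. Once that separation is in place, the remaining algebra and ODE manipulations parallel those used in the proof of Proposition \ref{P:4.1}.
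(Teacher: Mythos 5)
Your plan is viable, but note that it is not the route the paper takes: the paper's proof of Proposition \ref{P:4.2} is essentially a citation. It invokes the classification of \cite{cvv} of Lagrangian submanifolds whose second fundamental form has the shape \eqref{h_case(II)}, which already yields the warped-product structure \emph{and} the explicit lift \eqref{4.43} with the ODE system \eqref{4.44}; the only new ingredient added in Section 4 is that the trace conditions coming from Lemma \ref{L:3.2} force the fibre immersion to be minimal and $\delta(n-2)$-ideal. What you propose is, in effect, to reprove the relevant case of \cite{cvv} from scratch by lifting to $S^{2n+1}(1)\subset{\bf C}^{n+1}$ and separating variables; this is exactly the technique the paper itself uses for case (III) (compare the maps \eqref{7.5} and the computation \eqref{7.6} in the proof of Proposition \ref{P:7.1}), so the method is sound and gives a self-contained argument, at the cost of redoing the computations that the paper simply quotes.

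The part of your proposal that is not yet a proof is precisely the step you flag as an obstacle: the splitting ${\bf C}^{n+1}={\bf C}^n\oplus{\bf C}$ and the form $\tilde L=A(t)\Phi(u)+B(t)v_0$ cannot be taken as given, since the fixed hyperplane and the fixed vector $v_0$ must be \emph{constructed} from the immersion. Concretely, with $\varphi:=E_1(\ln f)$ and $\theta$ an antiderivative of $\lambda$ along $t$, the natural candidates (mirroring \eqref{7.5}) are
$\Phi:=e^{-i\theta}\bigl(\tilde L+(\varphi-i\lambda)\,d\tilde L(E_1)\bigr)/\sqrt{1+\lambda^2+\varphi^2}$
and
$\Theta:=e^{-i(n-2)\theta}\bigl(d\tilde L(E_1)-(\varphi+i\lambda)\tilde L\bigr)/\sqrt{1+\lambda^2+\varphi^2}$;
one must then verify, using \eqref{2.9} with $\varepsilon=1$, the case (II) coefficients $h(E_1,E_1)=(n-1)\lambda JE_1$, $h(E_1,E_i)=\lambda JE_i$, the warped-product connection ($\nabla_{E_1}E_1=0$, $\nabla_{E_i}E_1=\varphi E_i$), and the Codazzi-derived relations (which also give that $\lambda$, $\varphi$ depend on $t$ only and satisfy \eqref{4.44}), that $D\Theta=0$ in \emph{all} tangent directions, so $\Theta\equiv v_0$ is a constant unit vector, and that $D_{E_1}\Phi=0$, so $\Phi$ depends only on the fibre variables, takes values in the unit sphere of the fixed complex hyperplane orthogonal to $v_0$, and is a Legendre immersion whose projection is the minimal $\delta(n-2)$-ideal Lagrangian. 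Inverting these two relations for $\tilde L$ and $d\tilde L(E_1)$ is what produces \eqref{4.43}, including the phase $e^{i(n-2)\theta}$ of the last component, which is an output of this computation rather than an ansatz. (Your parameter count is harmless: $|A|^2+|B|^2=1$ already reduces $(A,B)$ to three real functions, so writing $A=e^{i\theta}/\sqrt{1+\lambda^2+\varphi^2}$, $B=e^{i(n-2)\theta}(i\lambda-\varphi)/\sqrt{1+\lambda^2+\varphi^2}$ is merely a re-coordinatization; the real content, that this $\lambda$ is the second-fundamental-form function and that \eqref{4.44} holds, comes from the differentiation step.) Once this is supplied, your outline matches the argument behind the paper's statement.
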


\begin{proposition}\label{P:4.3} 
Let $M$ be a $\delta(2,n-2)$-ideal Lagrangian submanifold of the complex hyperbolic space $CH^{n}(-4)$ $(n\geq 5)$ whose second fundamental form is given by case $\mathrm{(II)}$ of Lemma \ref{L:3.3}. Then $M$ is locally congruent to the image of $\pi\circ L$, where $\pi:H_{1}^{2n+1}(-1)\to CH^{n}(-4)$ is the Hopf fibration and $L$ is one of the following.

\vskip.05in
{\rm (a)} $ L(t, u_{2},\ldots,u_{n})=\( \dfrac{e^{i \theta}\Phi(u_{2},\ldots,u_{n}) }{\sqrt{1-\lambda^2-\varphi^2}}, \dfrac{  e^{i(n-2) \theta}(i\lambda-\varphi)}{\sqrt{1-\lambda^2-\varphi^2}}\), \quad \lambda^2+\varphi^2<1, $
\vskip.05in
\noindent where $\lambda$, $\varphi$ and $\theta$ are functions of $t$ only, satisfying
$$\lam'=(n-3)\lambda\varphi, \quad \varphi'=1-\varphi^2-(n-2)\lambda^2, \quad \theta'=\lambda $$
and $\Phi$ is a Legendre immersion into $H_{1}^{2n-1}(-1)$ whose composition with the Hopf fibration is a minimal $\delta(n-2)$-ideal Lagrangian immersion into $CH^{n-1}(-4)$;

\vskip.05in
{\rm (b)} $L(t, u_{2},\ldots,u_{n})=\( \dfrac{ e^{i(n-2) \theta}(i\lambda-\varphi)}{\sqrt{\lambda^2+\varphi^2-1}},\dfrac{e^{i \theta}\Phi(u_{2},\ldots,u_{n})}{\sqrt{\lambda^2+\varphi^2-1}}\),\;\; \lambda^2+\varphi^2>1, $
\vskip.05in
\noindent where $\lambda$, $\varphi$ and $\theta$ are functions of $t$ only, satisfying
$$ \lambda'=(n-3)\lambda\varphi, \quad \varphi'=1-\varphi^2-(n-2)\lambda^2, \quad \theta'=\lambda, $$
and $\Phi$ is a Legendre immersion into $S^{2n-1}(1)$ whose composition with the Hopf fibration is a minimal $\delta(n-2)$-ideal Lagrangian immersion into $CP^{n-1}(4)$;

\vskip.05in 
{\rm (c)} 
\begin{multline*}
L(t,u_2,\ldots,u_n) = \frac{\cosh^{\frac{2}{n-3}}\left(\frac{n-3}{2}t\right)}{e^{\frac{2i}{n-3} \arctan\left(\tanh(\frac{n-3}{2}t)\right)}} \left[ \left(w + \frac i2 \langle \Phi,\Phi \rangle + i, \Phi, w + \frac i2 \langle \Phi,\Phi \rangle \right) \right. \\
+ \left. \int_0^t \frac{e^{2i \arctan\left(\tanh(\frac{n-3}{2}t)\right)}}{\cosh^{\frac{2}{n-3}} \left(\frac{n-3}{2}t \right)} dt \ (1,0,\ldots,0,1)\right],
\end{multline*}
where $\Phi = \Phi(u_2,\ldots,u_n)$ parametrizes a minimal $\delta(n-2)$-ideal Lagrangian immersion into ${\bf C}^{n-1}$ and $w=w(u_2,\ldots,u_n)$ is the unique solution of the PDE system $ w_{u_{k}}=\< \Phi, i\Phi_{u_{k}}\>$ for $k=2,\ldots,n$. 
\end{proposition}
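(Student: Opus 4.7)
The plan is to imitate the approach of Propositions~\ref{P:4.1} and~\ref{P:4.2}, working this time through the horizontal lift to the anti-de Sitter sphere $H^{2n+1}_1(-1) \subset {\bf C}^{n+1}_1$ recalled in Section~2.2. The essentially new feature is that the Hermitian form on ${\bf C}^{n+1}_1$ is indefinite: the ODE system governing the profile of the immersion will admit a first integral whose \emph{sign} is preserved along every trajectory, and this is what produces the trichotomy (a), (b), (c).

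Concretely, I would first lift the given Lagrangian immersion to a Legendre immersion $\tilde L : M \to H^{2n+1}_1(-1)$, and use the warped product structure $M = I \times_f N$ recalled at the beginning of Section~4 to parametrize it as $\tilde L(t, u_2, \ldots, u_n)$ with $\partial_t = E_1$. Substituting \eqref{h_case(II)} into the Gauss formula \eqref{2.9} with $\varepsilon = -1$ and using \eqref{Codazzi} forces, by the same calculation that underlies Propositions~\ref{P:4.1} and~\ref{P:4.2}, the ODE system
\begin{equation*}
\lambda' = (n-3)\lambda\varphi, \qquad \varphi' = 1 - \varphi^2 - (n-2)\lambda^2, \qquad \theta' = \lambda
\end{equation*}
on the profile of the $t$-curve (the $+1$ here rather than the $-1$ of \eqref{4.44} being precisely the reflection of $c = -1$). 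A direct computation shows that
\begin{equation*}
Q \;:=\; \lambda^{2/(n-3)}(1 - \lambda^2 - \varphi^2)
\end{equation*}
is a first integral, so that the sign of $1 - \lambda^2 - \varphi^2$ is preserved along every integral curve.

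Second, I would interpret the sign of $Q$ geometrically: it records whether the ambient direction into which the fiber $N$ is rotated is timelike, spacelike, or null in ${\bf C}^{n+1}_1$, which in turn fixes the ambient space of the transverse Legendre immersion $\Phi$ ($H^{2n-1}_1(-1)$ in case~(a), $S^{2n-1}(1)$ in case~(b), and an affine null hyperplane in case~(c)). In each case the induced data on $N$ is forced, by applying Lemma~\ref{L:3.2} pointwise along $N$, to be that of a minimal $\delta(n-2)$-ideal Lagrangian immersion, exactly as in the reduction to \cite{cvv} at the beginning of Section~4. With the ansatz $\tilde L(t,u) = A(t)\Phi(u) + B(t) v$, for an appropriately chosen base vector $v$ (taken $0$ in cases (a) and (b), and equal to the lightlike vector $(1,0,\ldots,0,1)$ in case~(c)), the structure equations reduce to algebraic expressions for $A$, $B$ in terms of $\lambda, \varphi, \theta$. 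Integrating the scalar ODEs then yields the explicit parametrizations (a) and (b) in direct analogy with Proposition~\ref{P:4.2}; in case (c), the identity $\varphi^2 = 1 - \lambda^2$ lets one integrate in closed form, producing the $\cosh$ and $\arctan\tanh$ factors displayed in the formula.

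The main obstacle is case (c). The horospherical ansatz is built around a null rather than a unit direction of ${\bf C}^{n+1}_1$, so the condition that $\tilde L$ actually land in $H^{2n+1}_1(-1)$ is an additional constraint and it is precisely this constraint that produces the integral term in the formula (without it, the image would lie on a null hyperplane tangent to $H^{2n+1}_1(-1)$ rather than on the manifold itself). Moreover, the Legendre condition $\langle i\tilde L, \tilde L_{u_k}\rangle = 0$ translates, after substitution of the ansatz, exactly into the PDE system $w_{u_k} = \langle \Phi, i\Phi_{u_k}\rangle$, whose solvability requires the Lagrangian property of $\Phi$ in ${\bf C}^{n-1}$ and not merely its Legendre property. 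Once the formula is written down, checking that it does realize case~(II) of Lemma~\ref{L:3.3} with the prescribed second fundamental form is then a direct but lengthy computation.
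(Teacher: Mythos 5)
Your plan is essentially a self-contained re-derivation of what the paper obtains by citation: the paper's proof of Proposition \ref{P:4.3} consists of invoking \cite{cvv}, where Lagrangian submanifolds whose second fundamental form has the shape of case (II) of Lemma \ref{L:3.3} are already shown to be warped products $I\times_f N$ whose horizontal lifts have exactly the forms (a), (b), (c) (including the ODE systems and the trichotomy), and then observing via Lemma \ref{L:3.2} that the immersion of the fibre $N$ is minimal and $\delta(n-2)$-ideal. Your reduction of the fibre is the same as the paper's, and the structural part you propose to redo is sound: $\lambda^{2/(n-3)}(1-\lambda^2-\varphi^2)$ is indeed a first integral of $\lambda'=(n-3)\lambda\varphi$, $\varphi'=1-\varphi^2-(n-2)\lambda^2$, so the sign of $1-\lambda^2-\varphi^2$ is constant along each trajectory and produces the three cases; and the integrability condition of $w_{u_k}=\langle\Phi,i\Phi_{u_k}\rangle$ is precisely the Lagrangian condition on $\Phi$ in ${\bf C}^{n-1}$, as you say. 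What your route buys is self-containedness; what the paper's buys is brevity, since all the analytic work sits in \cite{cvv}.

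Two points need repair before this becomes a proof. First, the parenthetical ``$v$ taken $0$ in cases (a) and (b)'' is wrong: with $v=0$ the ansatz $\tilde L=A(t)\Phi(u)$ takes values in the complex subspace ${\bf C}^n$ (resp.\ ${\bf C}^n_1$) containing $\Phi$, which in case (b) is spacelike and hence disjoint from $H^{2n+1}_1(-1)$, and in case (a) forces $|A|\equiv 1$, so the $t$-curves would run along the Hopf fibres; in both cases one needs $B(t)\,v$ with $v$ a fixed unit vector orthogonal to that subspace (spacelike in (a), timelike in (b)), which is exactly the second block in the displayed formulas. Second, the two computational cores --- deriving the ODE system from the Gauss and Codazzi equations of the lift, and the closed-form integration in the null case (c) with its additional integral term --- are asserted rather than carried out; these are precisely the computations contained in \cite{cvv}, so either perform them explicitly or cite that paper, as the authors do.
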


\section{Classification in case (III) of Lemma \ref{L:3.3}}

In this section we assume that $M$ is a $\delta(2,n-2)$-ideal Lagrangian submanifold of a complex space form $\tilde M^n(4c)$ ($n \geq 5$), whose second fundamental form is given by case (III) of Lemma \ref{L:3.3}. 

\subsection{Proof that $M$ is a warped product} \label{sec5.1}

We define the following two orthogonal distributions on $M$ in terms of the orthonormal frame $\{ E_1, \ldots, E_n \}$:
\begin{equation}\label{3.7i}
\mathcal D_1 = \mathrm{span}\{E_1,E_2\}, \qquad \mathcal D_2 = \mathrm{span}\{E_3,\ldots,E_n\}.
\end{equation}

\begin{lemma}\label{L:5.1} 
Let $M$ be a $\delta(2,n-2)$-ideal Lagrangian submanifold of a complex space form $\tilde M^{n}(4c)$, $n\geq 5$, whose second fundamental form is given by case $\mathrm{(III)}$ of Lemma \ref{L:3.3}. Then
$\mathcal D_2$ is integrable.
\end{lemma}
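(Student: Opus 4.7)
The plan is to derive the integrability of $\mathcal D_2$ from the Codazzi equation. Since $M$ is Lagrangian, the cubic form $T(X,Y,Z)=\langle h(X,Y),JZ\rangle$ is totally symmetric, and the identity $\nabla^\perp_X JY = J\nabla_X Y$ turns Codazzi into the statement that $\nabla T$, viewed as a $(0,4)$-tensor via $(X,Y,Z,W)\mapsto (\nabla_X T)(Y,Z,W)$, is totally symmetric. Setting $\omega^A_B(X)=\langle\nabla_X E_B, E_A\rangle$ (so $\omega^A_B=-\omega^B_A$), the integrability of $\mathcal D_2$ is equivalent to $\omega^1_j(E_i)=\omega^1_i(E_j)$ and $\omega^2_j(E_i)=\omega^2_i(E_j)$ for all distinct $i,j\geq 3$. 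I will obtain each of these by expanding a well-chosen component of $\nabla T$ using case (III) and then symmetrizing in $(i,j)$.

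For the $E_1$-direction, consider $(\nabla_{E_i} T)(E_1,E_1,E_j)$ with $i,j\geq 3$ distinct. The only $T$-components that contribute are $T_{111}=\gamma$ and $T_{1ll}=T_{l1l}=T_{ll1}=\lambda$ for $l\geq 3$; expanding the covariant derivative and using $\omega^A_B=-\omega^B_A$ shows that the expression collapses to $(\gamma-2\lambda)\,\omega^j_1(E_i)$. Total symmetry of $\nabla T$ interchanges the first and fourth slots, so $(\gamma-2\lambda)(\omega^j_1(E_i)-\omega^i_1(E_j))=0$. Case (III) provides $\gamma>0$ and $\gamma>2n\lambda/3$, and with $n\geq 5$ this forces $\gamma>2\lambda$ (treat $\lambda\leq 0$ and $\lambda>0$ separately). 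Hence $\omega^1_j(E_i)=\omega^1_i(E_j)$.

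The same recipe applied to $(\nabla_{E_i} T)(E_2,E_2,E_j)$ yields $(n-2)\mu\,\omega^j_2(E_i)+(n\lambda-\gamma)\,\omega^j_1(E_i)$. Symmetrizing in $(i,j)$ and substituting the $E_1$-equality just proved reduces this to $(n-2)\mu\,(\omega^j_2(E_i)-\omega^i_2(E_j))=0$, which closes the $E_2$-direction whenever $\mu\neq 0$. If instead $\mu=0$, case (III) forces $\gamma\neq(n-1)\lambda$, and the analogous expansion of $(\nabla_{E_i} T)(E_1,E_2,E_j)$ equals $-\mu\,\omega^j_1(E_i)+((n-1)\lambda-\gamma)\,\omega^j_2(E_i)$; symmetrization together with the $E_1$-equality produces $((n-1)\lambda-\gamma)(\omega^j_2(E_i)-\omega^i_2(E_j))=0$, again closing the argument.

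The main obstacle is the case split in the $E_2$-direction: every Codazzi identity gives the desired symmetry only after cancelling a coefficient built from $\gamma$, $\lambda$, $\mu$, and the disjunction ``$\mu\neq 0$ or $\gamma\neq(n-1)\lambda$'' from case (III) is precisely what guarantees that one of these coefficients is nonzero. Everything else is routine bookkeeping of which $T$-components appear in each expansion and how skew-symmetry of $\omega$ eliminates redundant terms.
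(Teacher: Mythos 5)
Your proposal is correct and follows essentially the same route as the paper: the three components you expand, namely $(\nabla_{E_i}T)(E_1,E_1,E_j)$, $(\nabla_{E_i}T)(E_2,E_2,E_j)$ and $(\nabla_{E_i}T)(E_1,E_2,E_j)$, are exactly the Codazzi identities $\langle(\nabla_{E_i}h)(E_j,E_1),JE_1\rangle$, $\langle(\nabla_{E_i}h)(E_j,E_2),JE_2\rangle$ and $\langle(\nabla_{E_i}h)(E_j,E_1),JE_2\rangle$ used in the paper, with the same nonvanishing conditions $\gamma\neq 2\lambda$ and ``$\mu\neq 0$ or $\gamma\neq(n-1)\lambda$'' closing the argument. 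Your coefficients check out, so the proof is sound.
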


\begin{proof}
It follows from \e{h_case(III)} that $\<(\nabla_{E_{i}} h)(E_{j},E_1),JE_{1}\>=(2\lambda-\g)\<\nabla_{E_{i}}E_{j},E_{1}\>$ for all $i,j \geq 3$, which, in combination with Codazzi's equation, yields $(2\lambda-\gamma) \<[E_i,E_j],E_1\>=0$. The conditions $\gamma > 0$ and $\gamma > 2n\lambda/3$ imply $2\lambda - \gamma \neq 0$ and hence we obtain
\begin{equation} \label{5.1i}
\<[E_i,E_j],E_1\>=0. 
\end{equation}

It also follows from \e{h_case(III)} that $\<(\nabla_{E_{i}} h)(E_{j},E_1),JE_{2}\>=(\g-(n\!-\!1)\lambda)\<\nabla_{E_{i}}E_{j},E_{2}\>+\mu \<\nabla_{E_{i}}E_{j},E_{1}\>$ for all $i,j \geq 3$, which, in combination with Codazzi's equation and \e{5.1i} gives
\begin{equation} \label{5.1ii}
(\gamma-(n-1)\lambda)\<[E_i,E_j],E_2\>=0. 
\end{equation}

Finally, \e{h_case(III)} implies $\<(\nabla_{E_{i}} h)(E_{j},E_2),JE_{2}\>=(n-2)\mu \<\nabla_{E_{i}}E_{j},E_{2}\>-(n\lambda-\g) \<\nabla_{E_{i}}E_{j},E_{1}\>$ for all $i,j \geq 3$, which, using Codazzi's equation and \e{5.1i}, gives
\begin{equation} \label{5.1iii}
\mu\<[E_i,E_j],E_2\>=0. 
\end{equation}

Combining \e{5.1ii} and \e{5.1iii} with the fact that $\mu \neq 0$ or $\gamma \neq (n-1)\lambda$ implies
\begin{equation} \label{5.1iv}
\<[E_i,E_j],E_2\>=0. 
\end{equation}
Equations \e{5.1i} and \e{5.1iv} together imply that $[E_i,E_j] \in \mathcal D_2$ for all $i,j \geq 3$, which, by Frobenius' theorem, implies that $\mathcal D_2$ is integrable.
\end{proof}

In order to write down the information obtained from the other Codazzi equations, we use the following notations: the one-forms $\omega_j^k$ describing the Levi-Civita connection of $M$ are defined as usual by
\begin{equation}\label{3.7ii}
\omega_j^k(E_i) = \<\nabla_{E_i}E_j,E_k\>
\end{equation}
 
Comparing the $JE_{1}$-, $JE_{2}$- and $JE_{j}$-components ($j = 3,\ldots,n$) of the Codazzi equation $(\nabla_{E_{i}} h)(E_{1},E_1)=(\nabla_{E_{1}} h)(E_{1},E_i)$ ($i = 3,\ldots,n$) gives respectively
\begin{align}\label{5.3}  & E_{i}\g=(\g-2\lambda) \o^{i}_{1}(E_{1}),
\\ &\label{5.4}   (3\g-2n\lambda)\o_{1}^{2} (E_{i})=(\g-(n-1)\lambda) \o^{2}_{i}(E_{1})-\mu \o_{1}^{i}(E_{1}),
\\& \label{5.5} (\g-2\lambda) \o_{1}^{j}(E_{i})=\delta_{ij}(E_{1}\lambda-\mu\o_{1}^{2}(E_{1}))-\sum_{{k=3}}^{n}h_{ij}^{k}\o_{1}^{k}(E_{1})
\end{align} 
for all $i,j \geq 3$. Analogously, $(\nabla_{E_{i}} h)(E_{1},E_2)= (\nabla_{E_{1}} h)(E_{i},E_2)= (\nabla_{E_{2}} h)(E_{1},E_i)$ gives
\begin{align}\label{5.7} 
& (3\g - 2 n \lambda)\o_{1}^{2}(E_{i})= (\g \!-\! 2 \lambda)\o_{1}^{i}(E_{2}) = (\g-(n\!-\!1)\lambda) \o_{i}^{2}(E_{1})-\mu \o_{1}^{i}(E_{1}),
\\& \notag n E_{i}\lambda -(\g-2\lambda)\o_{1}^{i}(E_{1})-n\mu \o_{1}^{2}(E_{i})
=(n-2)\mu \o_{2}^{i}(E_{1})+(n\lambda-\g)\o_{1}^{i}(E_{1})
\\& \quad \label{5.8}  =((n-1)\lambda-\g) \o_{2}^{i}(E_{2})-\mu\o_{1}^{i}(E_{2}),
\\& \notag ((n\!-\!1)\lambda\!-\!\g)\o_{2}^{j}(E_{i})-\mu \o_{1}^{j}(E_{i})=(E_{1}\mu\! +\!\lambda \o_{1}^{2}(E_{1}))\delta_{ij} \! - \sum_{k=3}^{n} h^{k}_{ij}\o_{2}^{k}(E_{1})
\\& \quad \label{5.9} =(E_{2}\lambda -\mu \o_{1}^{2}(E_{2}))\delta_{ij}-\sum_{k=3}^{n} h_{ij}^{k} \o_{1}^{k}(E_{2})
\end{align} 
for $i,j \geq 3$. Finally, it follows from $(\nabla_{E_{i}} h)(E_{2},E_2)= (\nabla_{E_{2}} h)(E_{i},E_2)$ that
\begin{align}\label{5.10} & n E_i\mu+3(n\lambda- \g )\o_{1}^{2}(E_{i})
=(n-2)\mu \o_{2}^{i}(E_{2})-(\g-n\lambda)\o_{1}^{i}(E_{2}),
\\&\label{5.11} (n\lambda-\g)\o_{1}^{j}(E_{i})+(n-2)\mu \o_{2}^{j}(E_{i})
=\delta_{ij}(E_{2}\mu -\lambda \o_{2}^{1}(E_{2}))+\sum_{k=3}^{n} h_{ij}^{k} \o_{2}^{k}(E_{2})\end{align} 
for $i,j \geq 3$.

By changing the orthonormal frame $\{E_3,\ldots,E_n\}$ in $\mathcal{D}_2$ if necessary, we may assume 
that $\o_1^4(E_1)=\cdots=\o_1^n(E_1)=0$ or, equivalently, that the orthogonal projection of $\nabla_{E_1}E_1$ onto $\mathcal D_2$ lies in the direction of $E_3$:
\begin{align}\label{5.12}
\nabla_{E_1}E_1=\o_1^2(E_1)E_2+\omega_1^3(E_1) E_3.
\end{align} 
Thus, we find from $\sum_{i=3}^n \<(\nabla_{E_3}h)(E_i,E_1),JE_i\>=\sum_{i=3}^n \<(\nabla_{E_i}h)(E_3,E_1),JE_i\>$ that
\begin{align}\label{5.14} 
(n-3)(\gamma-2\lambda)(E_3 \lambda-\mu \o_{1}^{2}(E_{3}))=\o_1^3(E_1)\sum_{i,j=3}^n (h^3_{ij})^2.
\end{align}

On the other hand, it follows from \e{5.8} that
\begin{align} \label{5.14bis}
n (E_{3}\lambda -\mu \o_{1}^{2}(E_{3}))
 =(n-2)(\mu \o_{2}^{3}(E_{1})+\lambda\o_{1}^{3}(E_{1})).
\end{align} 
By combining \e{5.14} and \e{5.14bis}, we find
\begin{align}\label{5.15} 
(n^2-5n+6)(\gamma-2\lambda)(\mu \o_{2}^{3}(E_{1})+\lambda\o_{1}^{3}(E_{1}))= n\o_1^3(E_1)\sum_{i,j=3}^n (h^3_{ij})^2.
\end{align}
Similarly, 
$\sum_{i=3}^n \<(\nabla_{E_1}h)(E_i,E_i),JE_3\>=\sum_{i=3}^n \<(\nabla_{E_i}h)(E_1,E_i),JE_3\>$
yields
\begin{align}\label{5.16} 
(n^2-n+2)(\gamma-2\lambda)(\mu \o_{2}^{3}(E_{1})+\lambda\o_{1}^{3}(E_{1}))=
n\o_1^3(E_1)\sum_{i,j=3}^n (h^3_{ij})^2
\end{align}
and $\sum_{i=3}^n \<(\nabla_{E_3}h)(E_i,E_2),JE_i\>=\sum_{i=3}^n \<(\nabla_{E_i}h)(E_2,E_3),JE_i\>$ gives
\begin{align}\label{5.17}  E_3\mu=\lam  \o_{2}^{1}(E_{3})+\frac{1}{n-3}\sum_{i,j=3}^n h^3_{ij}\o_i^2(E_j).\end{align}
From \e{5.15}, \e{5.16} and the properties of $\gamma$, $\lambda$ and $\mu$ in case (III) of Lemma \ref{L:3.3}, we obtain
\begin{align}\label{5.18}  \lambda\o_{1}^{3}(E_{1})+\mu\o_2^3(E_1)=0
\end{align}
and we have either 
(a) $ h^3_{ij}=0$ for all $i,j\geq 3$ and $\o_{1}^{3}(E_{1})\ne 0$  or
(b) $\o_1^3(E_1)=0$.

\vskip.05in
 {\it Case} (a): $ h^3_{ij}=0$ for all $i,j\geq 3$ and $\o_{1}^{3}(E_{1})\ne 0$. Since $\lambda$ and $\mu$ cannot both be zero, \e{5.18} implies that $\mu\ne 0$ and 
\begin{align} \label{5.18bis}
\o_2^3(E_1)=-\frac{\lambda}{\mu}\o_1^3(E_1). 
\end{align}
Equation \e{5.17} and  $h^3_{ij}=0$ imply 
\begin{align}\label{5.19} 
E_3\mu=\lambda\o_2^1(E_3).
\end{align}
Also, it follows from \e{5.4} and \e{5.18bis} that
\begin{align}\label{5.20} &\o_1^2(E_3)=\frac{\lambda\gamma-(n-1)\lambda^2-\mu^2}{(3\gamma-2n\lambda)\mu} \omega^3_1(E_1). \end{align}
By combining \e{5.19} and \e{5.20} we obtain
\begin{align}\label{5.21} E_3\mu=\frac{\lam(\lambda\gamma-(n-1)\lambda^2-\mu^2)}{(2n\lambda-3\gamma)\mu}\omega^3_1(E_1).\end{align}

On the other hand, we find from \e{5.10} that
\begin{align}\label{5.22} & E_3\mu=\frac{3(\g-n\lambda )}{n}\o_{1}^{2}(E_{3})
+\frac{(n-2)\mu}{n} \o_{2}^{3}(E_{2})+\frac{n\lambda-\g}{n}\o_{1}^{3}(E_{2})
.\end{align} 
From  \e{5.20} and the first equality in \e{5.7} we find
\begin{equation}\begin{aligned}\label{5.23} 
\o_1^3(E_2)&\,=\frac{(n-1)\lam^2+\mu^2-\lambda\gamma}{(2\lambda-\g)\mu}\omega^3_1(E_1).\end{aligned}
\end{equation}
Now, \e{5.8}, \e{5.23} and \e{5.18bis} yield
\begin{equation}\begin{aligned}\label{5.25} \o_2^3(E_2)&\,=\frac{(n+3)\lam^2-5\lam\g+\g^2+\mu^2}{(2\lam-\g)((n-1)\lambda-\g)}\omega^3_1(E_1).\end{aligned}\end{equation}
By substituting \e{5.20}, \e{5.23} and \e{5.25} into \e{5.22} we find
\begin{equation}\begin{aligned}\label{5.26} E_3\mu=&\,\frac{3(\g-n\lambda )(\lambda\gamma-(n-1)\lambda^2-\mu^2)}{n(3\gamma-2n\lambda)\mu}\omega^3_1(E_1)
\\&+ \frac{(n-2)\mu((n+3)\lam^2-5\lam\g+\g^2+\mu^2)}{n(2\lam-\g)((n-1)\lambda-\g)}\omega^3_1(E_1)\\&+\frac{(n\lambda-\g)((n-1)\lam^2+\mu^2-\lambda\gamma)}{n(2\lambda-\g)\mu}\omega^3_1(E_1).\end{aligned}\end{equation}
Now, by comparing \e{5.21} and \e{5.26} we find
\begin{align}\notag 
\lam^2((n-1)\lam-\g)^2+\mu^4+\mu^2((\g-3\lam)^2+(2n-7)\lam^2)=0.
\end{align} 
Thus $\mu=0$, which is a contradiction. Hence, case (a) cannot occur.

\vskip.05in

{\it Case} (b): $\o_{1}^{3}(E_{1})=0$. In this case, the choice of $E_3$ we made before becomes arbitrary and thus equation \e{5.18} gives
$\o_{1}^{3}(E_{1})=\mu\o_2^3(E_1)=0$ for arbitrary $E_3$.
Thus, we  have
\begin{align}\label{5.27} 
\o_{1}^{i}(E_{1})=\mu\o_2^i(E_1)=0
\end{align}
for all $i \geq 3$. We can now choose $\{E_3,\ldots,E_n\}$ such that
\begin{align}\label{5.28} 
\nabla_{E_1}E_2=\o_2^1(E_1)E_1+\omega_2^3(E_1) E_3,
\end{align}
i.e., such that $\o_2^4(E_1)=\cdots=\o_2^n(E_1)=0$.
From \e{5.27}  and \e{5.28} we find $\mu\omega_2^3(E_1)=0$. Hence, either
(b.1) $\omega_2^3(E_1)\ne 0$ and $\mu=0$ or 
(b.2) $\omega_2^3(E_1)=0$.

\vskip.05in

{\it Case} (b.1): $\mu=0$ {\it and} $\omega_2^3(E_1)\ne 0$. We find from \e{5.4} and \e{5.7} that
\begin{align}\label{5.30} &0\ne\o_2^3(E_1)=\frac{2n\lam-3\g}{\g-(n-1)\lam}\o_1^2(E_3),
\\ &\label{5.31} \o_1^3(E_2)=\frac{3\g-2n\lam}{\g-2\lam}\o_1^2(E_3).
\end{align}
In particular, \e{5.30} gives
\begin{align}\label{5.32} 
3\g\ne 2n\lam,\;\; \o_1^2(E_3)\ne 0.
\end{align}
Also, from \e{5.4} and \e{5.7}:
\begin{align} \label{5.32.2} 
\o_1^2(E_k) = 0, \ \o_1^k(E_2) = 0
\end{align}
for $k \geq 4$. Since $\mu=0$, \e{5.10} becomes
\begin{align}\label{5.33} & 0=3(n\lambda- \g )\o_{1}^{2}(E_{3})
+(\g-n\lambda)\o_{1}^{3}(E_{2}).\end{align} 
Now, by substituting \e{5.31} into \e{5.33}, we find 
$(\g-n\lam)\lam \o_1^2(E_3)=0$. Since $\o_1^2(E_3) \neq 0$ by \e{5.32} and $\lambda$ and $\mu$ cannot both be zero, this shows that $\lambda\ne 0$ and $\g=n\lam$.

The second fundemental form \e{h_case(III)} reduces to
\begin{align}\label{5.34} 
& h(E_1,E_1) = n\lam JE_1, \ \ h(E_1,E_2) = h(E_2,E_2) = 0, \nonumber \\
& h(E_{1},E_{i})=\lambda JE_{i},\ \ h(E_2,E_i)= 0,\\
&\notag h(E_{i},E_{j})=\delta_{ij}\lambda JE_{1}+\sum_{k=3}^{n} h^{k}_{ij}JE_{k}
\end{align}
for $i, j \geq 3$, where $h_{33}^k+\ldots+h_{nn}^k=0$ for all $k \geq 3$. From \e{5.30} and  \e{5.31} we find
\begin{align}\label{5.36} & \o_{2}^{1}(E_{3})=\frac{\omega_2^3(E_1)}{n},\;\; \o_1^3(E_2)=-\frac{\omega_2^3(E_1)}{n-2}.\end{align} 
Thus \e{5.9}, \e{5.32.2} and \e{5.36} give
\begin{align}\label{5.37} \o_{j}^2(E_{i}) =\frac{E_{2}\lambda}{\lambda}\delta_{ij}+\frac{\omega_2^3(E_1)}{(n-2)\lambda} h_{ij}^{3}. \end{align} 
Now, by using \e{5.17}, \e{5.36} and \e{5.37}, we find 
$$\frac{\omega_2^3(E_1)}{\lambda n} \left( \lambda^2 + \frac{n}{(n-2)(3-n)}\sum_{i,j=3}^n (h^3_{ij})^2 \right) = 0,$$
which is a contradiction. Therefore, this case is again impossible.

\vskip.05in

{\it Case} (b.2): $\omega_2^3(E_1)=0$. From this assumption, we have
\begin{align}\label{5.40} 
\nabla_{E_1}E_1, \nabla_{E_1}E_2\in {\mathcal D}_1.
\end{align} 
It follows  from \e{5.7} that $\o_1^i(E_2)=0$ for $i \geq 3$. Thus  we also have
\begin{align}\label{5.41} \nabla_{E_2}E_1\in {\mathcal D}_1.\end{align}
From the last equation in \e{5.8} we find $((n-1)\lam-\g)\o_2^i(E_2)=0$.  Hence,  either 
(b.2.1) $\g=(n-1)\lambda$ and $\o_2^i(E_2)\ne 0$ for some $i\geq 3$ or 
(b.2.2) $\nabla_{E_2}E_2\in \mathcal D_1$. 

\vskip.05in

 {\it Case} (b.2.i): {\it  $\g=(n-1)\lambda$  and $\o_2^i(E_2)\ne 0$ for some} $i \geq 3$.  In this case, \e{h_case(III)} reduces to
\begin{equation}\begin{aligned}\label{5.42} 
& h(E_1,E_1)=(n-1)\lambda JE_1,
\; h(E_1,E_2)=\lambda JE_2,\; 
\\& h(E_2,E_2)=\lam JE_1 +n \mu JE_2,
\\&h(E_{1},E_{i})=\lambda JE_{i},\; h(E_{2},E_{i})=\mu JE_{i},
\\& h(E_{i},E_{j})=\delta_{ij}(\lambda JE_{1}+\mu J E_{2})+\sum_{k=3}^{n} h^{k}_{ij}JE_{k}
\end{aligned}\end{equation} 
for $i,j \geq 3$ and $h_{33}^k + \ldots h_{nn}^k = 0$ for all $k \geq 3$. We may  assume $\mu\ne 0$, otherwise this case reduces to case (II) of Lemma \ref{L:3.3}.
Without loss of generality, we may assume 
\begin{align}\label{5.43} 
\nabla_{E_2}E_2=\o_2^1(E_2)E_1+\o_2^3(E_2) E_3,
\end{align}
or, equivalently, $\o_2^4(E_2)=\cdots=\o_2^n(E_2)=0$. 
From \e{5.3}--\e{5.5}, \e{5.10} and \e{5.27}, we find
\begin{align}\label{5.45} 
&E_i\g=E_i\lam=\o_1^2(E_i)=E_4\mu=\cdots=E_n\mu=0,
\\ \label{5.46} &\o_i^1(E_j)=-\frac{\delta_{ij}}{(n-3)\lam}(E_1\lam-\mu \o_1^2(E_1)),
\\ \label{5.47} &E_3\mu=\frac{(n-2)\mu \o_2^3(E_2)}{n}, 
\end{align} 
for $i,j \geq 3$.
By applying \e{5.42}--\e{5.47}, we find
\begin{align}\label{5.48} \sum_{i=3}^n \<(\nabla_{E_3}h)(E_i,E_2),JE_i\>=(n-2)E_3\mu = \frac{(n-2)^2 \mu \o_2^3(E_2)}{n}.\end{align}
After long computation we also have
$\sum_{i=3}^n \<(\nabla_{E_2}h)(E_i,E_3),JE_i\>=n\mu \o_2^3(E_2).$
By Codazzi's equation and \e{5.48}, this would imply $\o_2^3(E_2)=0$ or $n=1$, which are both contradictions. Therefore, this case is impossible.

\vskip.05in

{\it Case} (b.2.2): $\nabla_{E_2}E_2\in \mathcal D_1$. 
In this case,  we have
\begin{align}\label{5.50} \o_\alpha^i(E_\beta)=0
\end{align}
for any $\alpha, \beta=1,2$ and $i \geq 3$, i.e., $\mathcal D_1$ is a totally geodesic distribution. From \e{5.4} and \e{5.50} we get \begin{align}\label{5.51} 
(3\g-2n\lam)\o_1^2(E_i)=0
\end{align}
for $i\geq 3$. Consequently,  either 
(b.2.2.1) $3\g=2n\lam$ and $\o_1^2(E_i)\ne 0$ for some $i \geq 3$ or 
(b.2.2.2) $\o_1^2(E_i)=0$ for all $i \geq 3$.

\vskip.05in

{\it Case} (b.2.2.1): $3\g=2n\lam$ and $\o_1^2(E_i)\ne 0$ {\it for some $i \geq 3$}. From \e{5.3}, we obtain $E_i\g=E_i\lam=0$. Hence, we have $\mu=0$ from  \e{5.8}. Also, we find  $(n\lam-\g)\o_1^2(E_i)=0$ from \e{5.10}. Combining this with \e{5.51} yields $\o_1^2(E_i)=0$, which is a contradiction. Consequently, this case cannot occur.

\vskip.05in

{\it Case} (b.2.2.2): $\o_1^2(E_i)=0$ for all $i \geq 3$. 
From \e{5.3}, \e{5.8} and \e{5.10}, we get
\begin{align}\label{5.52} 
E_i\g=E_i\lam=E_i\mu=0
\end{align}
for $i \geq 3$. Recall that $\g\ne 2\lam$, which follows from $\gamma >0$ and $\gamma > 2n\lambda/3$.
We find from \e{5.5}, \e{5.9} and \e{5.11} that
\begin{align}\label{5.53} &\o^1_{i}(E_j)=\frac{\delta_{ij}}{\g-2\lam}(\mu\o_1^2(E_1)-E_1\lam),
\\&\label{5.54} (n-2)\mu\o^2_i(E_j)=(\g-n\lam)\o^1_i(E_j)-
\delta_{ij}(E_2\mu+\lam \o_1^2(E_2)),
\\& \label{5.55} ((n-1)\lam-\g)\o^2_i(E_j)=\mu \o^1_i(E_j)-\delta_{ij}(E_1\mu+\lambda\o_1^2(E_1)),
\\&\label{5.56} ((n-1)\lam-\g)\o^2_i(E_j) =\mu \o^1_i(E_j)-\delta_{ij}(E_2\lam-\mu\o_1^2(E_2))
\end{align}
for $i,j \geq 3$.
Recall that $\mathcal D_1$ is totally geodesic and $\mathcal D_2$ is integrable. It now follows from \e{5.53}--\e{5.56} that leaves of $\mathcal D_2$ are totally umbilical submanifolds of the Lagrangian submanifold $M$. In particular, we may put
\begin{align}\label{5.57} 
\o_i^1(E_j)=p\delta_{ij},\;\; \o_i^2(E_j)=q\delta_{ij}
\end{align}
for some functions $p,q$ and all $i \geq 3$. 
Since $\o_1^2(E_i)=0$ for all $ i \geq 3$, \e{5.57} implies
\begin{align}\label{5.58} 
\nabla_{V}E_1=-pV,\;\; \nabla_V E_2=-qV
\end{align}
for all $V \in \mathcal D_2$. From \e{5.53}--\e{5.57} we get
\begin{equation}\begin{aligned}\label{5.59} & E_1\lam=\mu \o_1^2(E_1)+(2\lam-\g)p,
\\& E_2\lam=\mu\o^2_1(E_2)+\mu p-((n-1)\lam-\g)q,
\\&  E_1\mu = -\lam \o^2_1(E_1)+\mu p-((n-1)\lam-\g) q,
\\& E_2\mu=-\lam \o^2_1(E_2)+(\g-n\lam)p-(n-2)\mu q.
\end{aligned}\end{equation}

By applying \e{5.58}, we find
\begin{equation}\begin{aligned}\label{5.60} 
\<R(E_i,E_j)E_1,E_k\>=(E_jp)\delta_{ik}-(E_ip)\delta_{jk}
\end{aligned}\end{equation}
for all $i,j,k \geq 3$.
On the other hand, it follows from  equation \e{Gauss} of Gauss and \e{h_case(III)} that
\begin{equation}\begin{aligned}\label{5.61} & \<R(E_i,E_j)E_1,E_k\>=0.\end{aligned}\end{equation}
By combining \e{5.60} and \e{5.61} we get $E_jp=0$ for $j \geq 3$. Similarly, we find by computing $\<R(E_i,E_j)E_2,E_k\>$ and using \e{h_case(III)}, \e{5.58} and \e{5.59} that $E_jq=0$. Thus
\begin{align}\label{5.62} 
E_jp=E_jq=0
\end{align}
for all $j \geq 3$.

Now, by applying \e{h_case(III)}, \e{5.50}, \e{5.58}, and the equation of Gauss,
\begin{equation}
\<R(E_\alpha,E_3)E_\beta,E_3\>=-c\delta_{\alpha\beta}+\<h(E_\alpha,E_3),h(E_\beta,E_3)\>
-\<h(E_3,E_3),h(E_\alpha,E_\beta\>,
\end{equation}
where $\alpha,\beta=1,2$, we find 
\begin{equation}\begin{aligned}\label{5.63} 
& E_1p =q \o_1^2(E_1)+p^2-\lam^2+\lam\g+c,
\\& E_2p =q\o^2_1(E_2)+pq+(n\lam-\g)\mu-\lam\mu,
\\&  E_1q = -p \o^2_1(E_1)+pq+(n\lambda-\g)\mu-\lam \mu,
\\& E_2q=-p \o^2_1(E_2)+q^2+n\lam^2+(n-1)\mu^2-\lam\g +c.
\end{aligned}\end{equation}
Also, by applying \e{h_case(III)}, \e{5.50} and \e{5.59}, we find from the equation of Codazzi,
$(\nabla_{E_2}h)(E_1,E_1)= (\nabla_{E_1}h)(E_1,E_2)$,
that 
 \begin{equation}\begin{aligned}\label{5.64} & E_1\g=n(2\lam-\g)p+(2n\lam-3\g) \o_1^2(E_2),
\\& E_2\g=(3\g-2n\lam)\o^2_1(E_1).\end{aligned}\end{equation}
By applying \e{5.59}, \e{5.63} and \e{5.64}, we obtain
 \begin{equation}\begin{aligned}\label{5.65} 
 &E_1(c+\lambda^2+\mu^2+p^2+q^2)=2p(c+\lambda^2+\mu^2+p^2+q^2),
 \\&E_2(c+\lambda^2+\mu^2+p^2+q^2)=2q(c+\lambda^2+\mu^2+p^2+q^2).
 \end{aligned}\end{equation}

If we put $\mathring{H}=p E_1+q E_2$, then  \e{5.58} and \e{5.62} yield $\nabla_{V}\mathring{H}=-(p^2+q^2)V$ for all $V\in \mathcal D_2$, which shows that the mean curvature vector of each leaf of $\mathcal D_2$ is parallel in the normal bundle of this leaf in $M$.
Therefore, $\mathcal D_2$ is a spherical distribution. Consequently, the Lagrangian submanifold $M$ is locally the warped product $M^2\times_f M^{n-2}$ of a leaf $M^2$ of $\mathcal D_1$ and a leaf $M^{n-2}$ of $\mathcal D_2$. Moreover, all the leaves of $\mathcal D_1$ are totally geodesic surfaces in $M$ and all the leaves of $\mathcal D_2$ are spherical submanifolds of $M$.

It is well-known (see, for instance, \cite[page 79]{book}) that the warping function $f$ of the warped product $M^2\times_f M^{n-2}$ satisfies 
\begin{align}\label{5.68} (\nabla_{V}V)^{\mathcal D_1}=-\frac{\mathrm{grad}(f)}{f},\end{align}
for any unit vector field $V \in \mathcal D_2$, where the superscript $\mathcal D_1$ denotes the $\mathcal D_1$-component. Together with \e{5.58}, this implies that
\begin{align} \label{derivatives_of_f} 
E_1(f) = -pf, \ \ E_2(f)=-qf. 
\end{align}
It follows from \e{5.58}, \e{5.59} and \e{derivatives_of_f} that the following two vector fields commute and hence determine coordinates $(x,y)$ on $M^2$:
\begin{equation} \label{coordinates}
\begin{aligned}
& \frac{\partial}{\partial x} = \frac{1}{\lambda^2 + \mu^2} (\lambda E_1 + \mu E_2), \\
& \frac{\partial}{\partial y} = \frac{f^{n-2}}{\lambda^2 + \mu^2} (-\mu E_1 + \lambda E_2).
\end{aligned}
\end{equation}
From \e{derivatives_of_f} and \e{coordinates}, we find that the derivatives of $f$ are
\begin{align} \label{derivatives_of_f_2}
f_x = -\frac{f}{\lambda^2 + \mu^2} (\lambda p + \mu q), \ \
f_y = \frac{f^{n-1}}{\lambda^2 + \mu^2} (\mu p - \lambda q).
\end{align}

Remark that all functions appearing can now be explicitly expressed in terms of $f$. However, since the expressions are complicated and we will not need them to state our final result, we omit them here.

We can summarize this subsection as follows. 

\begin{proposition} \label{P_section_5}
Let $M$ be a $\delta(2,n-2)$-ideal Lagrangian submanifold of a complex space form $\tilde M^{n}(4c)$, $n\geq 5$, whose second fundamental form is given by case $\mathrm{(III)}$ of Lemma \ref{L:3.3}. Then $M$ is locally a warped product $M^2 \times_f M^{n-2}$, where $M^2$ is an integral surface of the distribution $\mathcal D_1 = \mathrm{span}\{E_1,E_2\}$ and $M^{n-2}$ is an integral submanifold of the distribution $\mathcal D_2=\{E_3,\ldots,E_n\}$. Moreover, $M^2$ is totally geodesic in $M$ and $M^{n-2}$ is spherical in $M$, in particular, there exist functions $p$ and $q$ such that $\nabla_VE_1=-pV$ and $\nabla_VE_2=-qV$ for all $V \in \mathcal D_2$. The derivatives of $\gamma$, $\lambda$, $\mu$, $p$, $q$ and $f$ are given by \e{5.52}, \e{5.59}, \e{5.62}, \e{5.63}, \e{5.64} and \e{derivatives_of_f}. Finally, the vector fields \e{coordinates} are coordinate vector fields on $M^2$.
\end{proposition}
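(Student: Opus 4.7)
The plan is to pass from the pointwise data of Lemma \ref{L:3.2} to a global warped product decomposition via a Hiepko-type theorem: I aim to establish that $\mathcal{D}_1$ is totally geodesic and $\mathcal{D}_2$ is spherical in $M$, and then read off the warping function and the coordinates on the two factors. Lemma \ref{L:5.1} already supplies integrability of $\mathcal{D}_2$, so the remaining task is to control $\nabla_{E_\alpha}E_\beta$ for $\alpha,\beta\in\{1,2\}$ and $\nabla_V E_\alpha$ for $V\in\mathcal{D}_2$. The basic tool is the Codazzi equation $(\nabla_X h)(Y,Z)=(\nabla_Y h)(X,Z)$ expanded via \e{h_case(III)}: applied to each triple of frame vectors, it yields a linear relation among the connection one-forms $\omega_j^k(E_i)=\langle \nabla_{E_i}E_j,E_k\rangle$ and the scalars $\gamma,\lambda,\mu,h^k_{ij}$.

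Next, I would exploit the rotational freedom in $\mathcal{D}_2$. Normalizing so that $\omega_1^k(E_1)=0$ for $k\geq 4$, the relevant Codazzi identities yield $\lambda\omega_1^3(E_1)+\mu\omega_2^3(E_1)=0$, which forces a dichotomy: either (a) $h^3_{ij}=0$ for all $i,j\geq 3$ with $\omega_1^3(E_1)\neq 0$, or (b) $\omega_1^3(E_1)=0$. Case (a) is expected to collapse by deriving two expressions for $E_3\mu$ — one from $(\nabla_{E_3}h)(E_i,E_2)=(\nabla_{E_i}h)(E_2,E_3)$ and one from $(\nabla_{E_i}h)(E_2,E_2)=(\nabla_{E_2}h)(E_i,E_2)$ — whose comparison forces $\mu=0$, a contradiction. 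Within case (b), a further rotation normalizes $\omega_2^k(E_1)=0$ for $k\geq 4$; the residual subcases are then eliminated using the fact that $\gamma=(n-1)\lambda$ with $\omega_2^i(E_2)\neq 0$ reduces to case (II) of Lemma \ref{L:3.3} (excluded by hypothesis), while $3\gamma=2n\lambda$ with $\omega_1^2(E_i)\neq 0$ leads to a direct Codazzi contradiction. The only surviving configuration forces $\nabla_{E_\alpha}E_\beta\in\mathcal{D}_1$ and $\omega_1^2(E_i)=0$ for $i\geq 3$, i.e.\ $\mathcal{D}_1$ is totally geodesic. I expect this case analysis to be the main obstacle, since it requires repeated cross-use of the identities \e{5.3}--\e{5.11} and careful appeal to the nondegeneracy conditions $\gamma>0$, $\gamma>2n\lambda/3$, and $(\mu,\gamma-(n-1)\lambda)\neq(0,0)$ coming from case (III) of Lemma \ref{L:3.3}.

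In the surviving configuration, Codazzi's equations \e{5.5}, \e{5.9} and \e{5.11} become diagonal in the indices $i,j\geq 3$, so there exist functions $p,q$ with $\omega_i^1(E_j)=p\,\delta_{ij}$ and $\omega_i^2(E_j)=q\,\delta_{ij}$; hence every leaf of $\mathcal{D}_2$ is totally umbilical in $M$ with mean curvature vector $\mathring H=pE_1+qE_2$. The non-diagonal information from the same equations gives the derivatives \e{5.59} and \e{5.64} of $\lambda,\mu,\gamma$ along $\mathcal{D}_1$. Writing the intrinsic expression for $\langle R(E_i,E_j)E_\alpha,E_k\rangle$ via \e{5.58} and comparing it with the Gauss equation yields $E_j p=E_j q=0$ for $j\geq 3$ (equation \e{5.62}) together with the first-order system \e{5.63} for the derivatives of $p,q$ along $E_1,E_2$. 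A short computation then gives $\nabla_V\mathring H=-(p^2+q^2)V$ for $V\in\mathcal{D}_2$, so $\mathring H$ is parallel in the normal bundle of each leaf of $\mathcal{D}_2$ inside $M$; thus $\mathcal{D}_2$ is spherical.

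Finally, with $\mathcal{D}_1$ totally geodesic and $\mathcal{D}_2$ spherical, a Hiepko-type decomposition theorem produces the local warped product $M=M^2\times_f M^{n-2}$, where $M^2$ and $M^{n-2}$ are leaves of $\mathcal{D}_1$ and $\mathcal{D}_2$ respectively. Formula \e{5.68}, combined with $\nabla_V E_1=-pV$ and $\nabla_V E_2=-qV$, gives $E_1 f=-pf$ and $E_2 f=-qf$, which is \e{derivatives_of_f}. The two vector fields displayed in \e{coordinates} then commute by a direct Lie-bracket computation using \e{5.59} and \e{derivatives_of_f}, so they serve as coordinate vector fields on $M^2$, completing the proof.
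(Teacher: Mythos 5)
Your overall route is the same as the paper's: Codazzi relations among the connection forms $\omega_j^k(E_i)$, the normalization $\omega_1^k(E_1)=0$ for $k\ge 4$, the dichotomy coming from $\lambda\,\omega_1^3(E_1)+\mu\,\omega_2^3(E_1)=0$, the elimination of the branch $h^3_{ij}=0$, $\omega_1^3(E_1)\ne 0$ by computing $E_3\mu$ in two ways, and, in the surviving configuration, the umbilicity functions $p,q$, the Gauss-equation argument giving \e{5.62}, the sphericity of $\mathcal D_2$, the warped product decomposition, \e{derivatives_of_f} and the commuting fields \e{coordinates}. However, your case analysis inside the branch $\omega_1^3(E_1)=0$ has two genuine gaps. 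First, after the second normalization $\omega_2^k(E_1)=0$ for $k\ge 4$, the relation $\mu\,\omega_2^3(E_1)=0$ leaves the subcase $\mu=0$, $\omega_2^3(E_1)\ne 0$, which you never address. This subcase is compatible with the hypotheses of case (III) (only the simultaneous conditions $\mu=0$ and $\gamma=(n-1)\lambda$ are excluded there), and ruling it out is not automatic: in the paper it forces $\gamma=n\lambda$ and is only eliminated by a further Codazzi computation combining \e{5.17}, \e{5.36} and \e{5.37}. Without this step your ``only surviving configuration'' does not follow.

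Second, your disposal of the subcase $\gamma=(n-1)\lambda$ with $\omega_2^i(E_2)\ne 0$ is incorrect as stated. The hypothesis of case (III) of Lemma \ref{L:3.3} forces $\mu\ne 0$ in this subcase, and then the second fundamental form \e{5.42} is \emph{not} of the form \e{h_case(II)}; so the subcase is neither ``excluded by hypothesis'' nor reducible to case (II). It must be eliminated by an explicit argument, as in the paper: choose $E_3$ so that $\nabla_{E_2}E_2=\omega_2^1(E_2)E_1+\omega_2^3(E_2)E_3$, derive $E_3\mu=(n-2)\mu\,\omega_2^3(E_2)/n$ from \e{5.10}, and compare $\sum_{i\ge 3}\langle(\nabla_{E_3}h)(E_i,E_2),JE_i\rangle$ with $\sum_{i\ge 3}\langle(\nabla_{E_2}h)(E_i,E_3),JE_i\rangle$; Codazzi then gives $(n-2)^2\mu\,\omega_2^3(E_2)/n=n\mu\,\omega_2^3(E_2)$, hence $\omega_2^3(E_2)=0$ or $n=1$, a contradiction. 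Once you supply these two eliminations, the rest of your argument (the remaining subcases, equations \e{5.52}--\e{5.65}, the sphericity of $\mathcal D_2$ via $\mathring H=pE_1+qE_2$, and the coordinates on $M^2$) agrees with the paper's proof.
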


In the next three subsections we will classify the $\delta(2,n-2)$-ideal Lagrangian submanifolds whose second fundamental form satisfies case (III) of Lemma \ref{L:3.3} in the ambient spaces $\mathbf C^n$, $CP^n(4)$ and $CH^n(-4)$ respectively.

\subsection{Classification in $\mathbf C^n$}

\begin{proposition}\label{P:6.1} 
Let $L:M \to \mathbf C^n$ ($n \geq 5$) be a $\delta(2,n-2)$-ideal Lagrangian immersion whose second fundamental form is given by case $\mathrm{(III)}$ of Lemma \ref{L:3.3}. 
Then $L$ is locally congruent to
\begin{align}\label{6.2}
L(x,y,u_1,\ldots,u_{n-2})=\big(f(x,y)e^{i x}\Phi(u_1,\ldots,u_{n-2}), z(x,y)\big),\end{align}
where $\Phi$ defines a minimal Legendre immersion in $S^{2n-3}(1)\subset {\bf C}^{n-1}$ and $(fe^{i x},z)$ is a Lagrangian surface in ${\bf C}^2$, where $f$ is determined by 
\begin{align} \label{PDE_for_f_Cn}
\frac{f_{yy}}{f^{n-2}} - (n-2) \frac{f_y^2}{f^{n-1}} + (n-1)f^{n-1} + (n-2) f^{n-3} f_x^2 + f^{n-2} f_{xx} = 0
\end{align}
and $z$ by
\begin{equation} \label{system_for_z_Cn}
\begin{aligned}
& z_x = e^{i(n-1)x} \frac{f_y}{f^{n-2}}, \\
& z_y = e^{i(n-1)x} f^{n-1} \left(i-\frac{f_x}{f}\right).
\end{aligned}
\end{equation}
\end{proposition}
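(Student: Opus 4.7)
The starting point is Proposition~\ref{P_section_5}: $M$ is locally a warped product $M^{2}\times_{f}M^{n-2}$ with $M^{2}$ totally geodesic tangent to $\mathcal D_{1}$, $M^{n-2}$ spherical tangent to $\mathcal D_{2}$, and with the first-order derivatives of $\g,\lam,\mu,p,q,f$ along the frame $\{E_{1},\ldots,E_{n}\}$ explicitly given by \e{5.52}--\e{derivatives_of_f}. My strategy is to integrate these equations in $\mathbf C^{n}$ by isolating one distinguished complex direction along which $M$ sweeps out only a Lagrangian surface, and by identifying the complementary $\mathbf C^{n-1}$-factor as the rescaled image of a Legendre map of the fiber depending on $(u_{1},\ldots,u_{n-2})$ alone.

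\emph{Step 1: each fiber lies on a Euclidean sphere.} Set $\zeta=pE_{1}+qE_{2}+\lam JE_{1}+\mu JE_{2}$ and $R^{2}=p^{2}+q^{2}+\lam^{2}+\mu^{2}$. For $V\in\mathcal D_{2}$, the Gauss--Weingarten formulas combined with $\nabla_{V}E_{1}=-pV$, $\nabla_{V}E_{2}=-qV$, the relations $h(V,E_{1})=\lam JV$ and $h(V,E_{2})=\mu JV$ from \e{h_case(III)}, and the vanishings $V(\lam)=V(\mu)=V(p)=V(q)=0$ coming from \e{5.52} and \e{5.62}, yield the clean identity $D_{V}\zeta=-R^{2}V$ together with $V(R)=0$. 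Hence $C(x,y):=L+\zeta/R^{2}$ is constant along each fiber of $\mathcal D_{2}$, and every fiber lies on a Euclidean sphere of radius $1/R$ centered at $C(x,y)$; one identifies $1/R$ with the warping function $f$ by comparing with \e{derivatives_of_f_2}.

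\emph{Step 2: the centers $C(x,y)$ sweep out a complex line, and $\Psi=L'/f$ factorizes.} Using $D_{\partial_{x}}L=\partial_{x}$, $D_{\partial_{y}}L=\partial_{y}$ together with the structure equations \e{5.59}, \e{5.63}, \e{5.64} and \e{derivatives_of_f_2}, I compute $C_{x}$ and $C_{y}$ and verify that both are complex-scalar multiples of one and the same unit vector $e\in\mathbf C^{n}$. After a rigid motion of $\mathbf C^{n}$ we may take $e=(0,\ldots,0,1)$ and the image of $C$ inside $\{0\}\times\mathbf C$. Splitting $L=(L',z)$ with $L'\in\mathbf C^{n-1}$ and $z\in\mathbf C$, the constancy $V(C)=0$ forces $V(z)=0$, so $z=z(x,y)$, and $|L'|^{2}=|L-C|^{2}=f^{2}$ shows that $\Psi:=L'/f$ takes values in $S^{2n-3}(1)$. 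A further calculation, using the totally geodesic character of $M^{2}$ and the Lagrangian condition, gives $\partial_{y}\Psi\equiv 0$ and $\partial_{x}\Psi=i\theta'(x)\Psi$ for some phase $\theta=\theta(x)$; after the reparametrization $x\mapsto\theta(x)$, this yields $\Psi(x,y,u)=e^{ix}\Phi(u)$ for some $\Phi:M^{n-2}\to S^{2n-3}(1)$. The Legendre condition $\la\Phi,i\Phi_{u_{k}}\ra=\la\Phi_{u_{j}},i\Phi_{u_{k}}\ra=0$ reduces to the Lagrangian condition for the fiber, while the minimality of $\Phi$ (equivalently, minimality of $\pi\circ\Phi$ in $CP^{n-1}(4)$) follows from the trace-free condition $h^{k}_{33}+\cdots+h^{k}_{nn}=0$ of Lemma~\ref{L:3.2}.

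\emph{Step 3: deriving the PDE for $f$ and the system for $z$.} Substituting the ansatz $L=(fe^{ix}\Phi,z)$ into the structure equations of $M$ and imposing the integrability condition $(L_{x})_{y}=(L_{y})_{x}$ (equivalently, the compatibility of the equations for $p$ and $q$ in \e{5.63}) produces the second-order PDE \e{PDE_for_f_Cn} for $f$. Reading off the last-complex-coordinate of $L_{x}$ and $L_{y}$ gives the first-order system \e{system_for_z_Cn}, whose Lagrangian consistency $\mathrm{Im}\,\la L_{x},iL_{y}\ra=-ff_{y}$ can be verified directly from these formulas. The main obstacle is Step~2: showing that the two a priori unrelated vectors $C_{x}$ and $C_{y}$ are complex-proportional to a common direction requires a somewhat lengthy manipulation of the first-order system of Subsection~\ref{sec5.1}. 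Once this proportionality is secured, the splitting of $\mathbf C^{n}$ and the factorization $\Psi=e^{ix}\Phi(u)$ are forced by the totally geodesic property of $M^{2}$, and the rest of the proof is a direct verification.
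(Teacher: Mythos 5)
Your proposal is correct and takes essentially the same route as the paper: your center map $C=L+\zeta/R^{2}$ and constant complex direction $e$ are precisely the paper's $L+\tfrac{e^{ix}}{\sqrt{\lambda^{2}+\mu^{2}+p^{2}+q^{2}}}\Phi$ and parallel vector $\Psi$ of \eqref{6.10}--\eqref{6.11}, and your Step 3 reproduces the paper's derivation of \eqref{system_for_z_Cn} with \eqref{PDE_for_f_Cn} as its compatibility condition. The only differences are cosmetic (you fix the constant direction before factorizing the fiber map, the paper does the reverse), and in fact with the coordinates \eqref{coordinates} the phase is automatically $e^{ix}$, so your reparametrization $x\mapsto\theta(x)$ is unnecessary.
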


\begin{proof} Let $L:M\to {\bf C}^{n}$ be a $\delta(2,n-2)$-ideal Lagrangian immersion whose second fundamental form is given by case $\mathrm{(III)}$ of Lemma \ref{L:3.3}. It follows from Proposition \ref{P_section_5} that $M$ is locally a warped product  $M^2\times_{f} M^{n-2}$ of a surface $M^2$ and an $(n-2)$-dimensional Riemannian manifold $M^{n-2}$ with warping function $f$ satisfying \e{derivatives_of_f}. Using \e{5.65} and also \e{5.52} and \e{5.62}, which imply that $p$, $q$, $\lambda$ and $\mu$ are constant along $\mathcal D_2$, gives
\begin{align}\label{6.5} 
f=\frac{C}{\sqrt{\lambda^2+\mu^2+p^2+q^2}}
\end{align}
for some positive constant $C$. Now choose coordinates $(x,y)$ as in \e{coordinates} and consider the map $\Phi$ defined by
\begin{equation}
\begin{aligned}\label{6.6} 
\Phi=\frac{e^{-i x} ((p+i \lambda)E_{1}+(q+i \mu)E_{2})}{\sqrt{\lambda^{2}+\mu^{2}+p^{2}+q^{2}}}.
\end{aligned}
\end{equation} 
Remark that $\langle \Phi,\Phi \rangle = 1$. Denote by $D$ the Euclidean connection on $\mathbf C^n$. From \e{3.4}, \e{5.59}, \e{5.63} and \e{5.65} we obtain
\begin{align}\label{6.7}  
D_{E_1}\Phi=D_{E_2}\Phi=0.
\end{align}
Also, by applying \e{3.4}, \e{5.52}, \e{5.58} and \e{5.62}, we find
\begin{align} \label{6.8} 
D_{E_i}\Phi=-e^{-i x}\sqrt{\lam^2 + \mu^2 + p^2 + q^2} E_i
\end{align}
for all $i \geq 3$. This implies that $\Phi$ is an immersion from $M^{n-2}
$ into $S^{2n-1}(1) \subseteq \mathbf C^n$. We will show that the image of $\Phi$ is contained in a linear subspace $\mathbf C^{n-1} \subseteq \mathbf C^n$ and hence in a unit sphere $S^{2n-3}(1) \subseteq \mathbf C^{n-1}$. Therefore, consider for any point $p \in M^{n-2}$ the complex linear subspace $$\mathrm{span}\{\Phi(p), (d\Phi)_p(E_3), \ldots, (d\Phi)_p(E_n)\} \subseteq T_{\Phi(p)}\mathbf C^n.$$
To see that all these subspaces are in fact the same subspace of $\mathbf C^n$, we remark that from \e{3.4}, \e{5.52}, \e{5.57}, \e{5.62} and \e{6.8} 
\begin{equation}
\begin{aligned}\label{6.9} 
D_{E_j}(d\Phi)(E_i) &= D_{E_j} \left( -e^{-i x}\sqrt{\lam^2 + \mu^2 + p^2 + q^2} E_i \right) \\
& =-e^{-i x}\sqrt{\lam^2 + \mu^2 + p^2 + q^2} \, D_{E_j}E_i\\
& =\sum_{k=3}^{n} \(\o_i^k(E_j)+i h_{ij}^k\) (d\Phi)(E_k) -\delta_{ij}(\lam^2 + \mu^2 + p^2 + q^2) \Phi,
\end{aligned}
\end{equation}  
which belongs again to $\mathrm{span}\{\Phi, (d\Phi)(E_3), \ldots, (d\Phi)(E_n)\}$ for any $i,j \geq 3$. We conclude that $\Phi$ is an immersion of $M^{n-2}$ into $S^{2n-3}(1) \subseteq \mathbf C^{n-1} \subseteq \mathbf C^{n}$. Moreover, it follows from the computation above that the second fundamental form of the immersion $\Phi$ coincides with the second fundamental form of $L$ restricted to $M^{n-2}$, which implies that $\Phi$ is a minimal Legendre immersion of $M^{n-2}$ into $S^{2n-3}(1)$. 

Let us put
\begin{align}&  \label{6.10}
\Psi = \frac{(-q+i \mu)E_1 +(p-i \lam)E_2}{e^{i(n-1)x} \sqrt{\lambda^2 + \mu^2 + p^2 + q^2}}.
\end{align}
Then $\Psi$ is orthogonal to $\Phi$ and 
\begin{equation}\begin{aligned}\label{6.11} 
& D_{E_j}\! \(L+\frac{e^{i x}}{\sqrt{\lam^2\!+\!\mu^2\!+\!p^2\!+\!q^2}}\Phi\)=0 \mbox{ if } j \geq 3, \\
& D_{E_1}\! \(L+\frac{e^{i x}}{\sqrt{\lam^2\!+\!\mu^2\!+\!p^2\!+\!q^2}}\Phi\)=\frac{-e^{i(n-1)x}(q+i \mu)}{\sqrt{\lam^2\!+\!\mu^2\!+\!p^2\!+\!q^2}}\Psi, \\
&D_{E_2}\! \(L+\frac{e^{i x}}{\sqrt{\lam^2\!+\!\mu^2\!+\!p^2\!+\!q^2}}\Phi\)=\frac{e^{i(n-1)x}(p+i \lam)}{\sqrt{\lam^2\!+\!\mu^2\!+\!p^2\!+\!q^2}}\Psi.
\end{aligned}\end{equation}
Moreover, since $D_{E_A}\Psi = 0$ for all $A = 1,\ldots,n$, we can assume that, after a suitable isometry of the ambient space, $\Psi = (0,\ldots,0,1)$. Consequently, $L$ takes the form
\begin{align}\label{6.12}L(x,y,u_1,\ldots,u_{n-2})=\(-\frac 1C f(x,y)e^{i x}\Phi(u_1,\ldots,u_{n-2}), z(x,y)\),
\end{align}
where $z$ is a complex valued function whose derivatives are essentially computed in \e{6.11}. By using \e{coordinates}, we obtain that $z$ satisfies 
\begin{align*}
& z_x = \frac 1C e^{i(n-1)x} \frac{f_y}{f^{n-2}}, \\
& z_y = \frac 1C e^{i(n-1)x} f^{n-1} \left(i-\frac{f_x}{f}\right).
\end{align*} 
The compatibility condition for this system is precisely \e{PDE_for_f_Cn}.

Note that everything is invariant under a rescaling of $f$, if, at the same time, we rescale the $y$-coordinate corresponding to \e{coordinates}. Hence, we may assume $C=1$. Moreover, after an isometry of $\mathbf C^n$ we may omit the minus signs in the first $n-1$ components of $L$, obtaining \eqref{6.2}. 

Since $L:M_1^2\times_f M_2^{n-2}\to {\bf C}^{n}$ is Lagrangian, it follows from \e{6.12} that $\Phi$ is a Legendre minimal immersion in $S^{2n-3}(1)\subset {\bf C}^{n-1}$. Note that $(fe^{ix},z)$ is a Lagrangian surface in ${\bf C}^2$.

The converse can be verified by direct long computation.
\end{proof}

\begin{example}
Let us construct an explicit example in dimension $n=5$ by assuming that $f_y=0$. In that case, the general solution of \e{PDE_for_f_Cn} is given by
\begin{equation}
f(x) = c_1 (\cos(4x-c_2))^{1/4}.
\end{equation}
It then follows from the system \e{system_for_z_Cn} that $z$ is independent of $x$ and
\begin{equation}
z(y) = ic_1^4 e^{ic_2} y.
\end{equation}
Note that $c_1$ cannot be zero since $(f(x)e^{ix},z(y))$ has to be an immersion. It is a product of two plane curves and thus a Lagrangian surface in ${\bf C}^2$.
\end{example}

\subsection{Classification in $CP^{n}$}

\begin{proposition}\label{P:7.1} 
Let $L:M\to CP^{n}(4)$ ($n \geq 5$) be a $\delta(2,n-2)$-ideal Lagrangian immersion whose second fundamental form is given by case $\mathrm{(III)}$ of Lemma \ref{L:3.3}. Then the horizontal lift $\tilde L: M\to S^{2n+1}(1)\subseteq {\bf C}^{n+1}$ of $L$ is given by
\begin{equation} \label{7.1}
\tilde L(x,y,u_1,\ldots,u_{n-2})= e^{ix} f(x,y) \Phi(u_1,\ldots,u_{n-2}) + e^{i(n-1)x} \sqrt{1-f(x,y)^2} \, \Theta_2(x,y) ,
\end{equation}
where, with respect to a suitable orthogonal decomposition $\mathbf C^{n+1} = \mathbf C^{n-1} \oplus \mathbf C^2$, the map $\Phi: M^{n-2} \to S^{2n-3}(1) \subseteq \mathbf C^{n-1}$ is a minimal Legendre immersion, the warping function $f$ is determined by
\begin{multline}
(1-f^2) f^{2n-3} f_{xx} + (1-f^2) f f_{yy} + ( (n-2)(1-f^2)+2f^2 ) f^{2n-4} f_x^2 \\ - ( (n-2)(1-f^2)-2f^2 ) f_y^2 + ( (n-1)(1-f^2)+2f^2 ) f^{2n-2} = 0
\end{multline}
and $\Theta_2: M^2 \to S^3(1) \subseteq \mathbf C^2$ is a solution of the system
\begin{equation}
\begin{aligned}
& (\Theta_1)_x = \frac{1}{1-f^2} \( i f^2 \Theta_1 - \frac{f_y}{f^{n-2}} \Theta_2 \), \\
& (\Theta_1)_y = \frac{1}{1-f^2} f^{n-2} (f_x + if) \Theta_2, \\
& (\Theta_2)_x = \frac{1}{1-f^2} \( \frac{f_y}{f^{n-2}} \Theta_1 + i((1-n)(1-f^2)-f^2) \Theta_2 \), \\
& (\Theta_2)_y = \frac{1}{1-f^2} f^{n-2} (-f_x + if) \Theta_1.
\end{aligned}
\end{equation}
\end{proposition}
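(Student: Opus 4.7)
The plan is to parallel Proposition~\ref{P:6.1} in the horizontal-lift setting. By Proposition~\ref{P_section_5}, $M$ is locally a warped product $M^2 \times_f M^{n-2}$ with $M^2$ totally geodesic and $M^{n-2}$ spherical in $M$, and I equip $M^2$ with the coordinates $(x,y)$ from \eqref{coordinates}. Passing to the horizontal lift $\tilde L:M \to S^{2n+1}(1)\subseteq \mathbf C^{n+1}$ of $L$, I invoke \eqref{2.9} with $\varepsilon=1$ to express the flat Euclidean connection $D$ on $\mathbf C^{n+1}$ in terms of $\nabla$, $h$ and an additional $-\langle X,Y\rangle\tilde L$-correction.

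In analogy with \eqref{6.6}, the natural candidate is
\[ \Psi_0 = \frac{e^{-ix}\bigl((p+i\lambda)E_1+(q+i\mu)E_2\bigr)}{\sqrt{\lambda^2+\mu^2+p^2+q^2}}. \]
Repeating the $\mathbf C^n$-calculation that led to \eqref{6.7} but keeping the extra $-\langle X,Y\rangle\tilde L$-term from \eqref{2.9}, one finds that $D_{E_1}\Psi_0$ and $D_{E_2}\Psi_0$ no longer vanish but instead lie in $\mathrm{span}_{\mathbf R}\{\tilde L\}$. Motivated by the target formula \eqref{7.1}, I therefore set $\Phi := (e^{-ix}/f)\bigl(\tilde L - e^{i(n-1)x}\sqrt{1-f^2}\,\Theta_2\bigr)$ for an as-yet-undetermined $\mathbf C^2$-valued $\Theta_2$, and choose $\Theta_2$ so that $D_{E_1}\Phi = D_{E_2}\Phi = 0$ (this is possible because the obstruction is a single $\tilde L$-valued one-form on $\mathcal D_1$, whose primitive will produce the $\Theta_2$-ansatz). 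For $i\geq 3$, using \eqref{5.52}, \eqref{5.58} and \eqref{5.62}, $D_{E_i}\Phi$ is a scalar multiple of $E_i$ and $D_{E_j}(d\Phi)(E_i)$ is a $\mathbf C$-linear combination of $\Phi$ and the $(d\Phi)(E_k)$. Hence $\mathrm{span}_{\mathbf C}\{\Phi,d\Phi(E_3),\ldots,d\Phi(E_n)\}$ is a fixed $\mathbf C^{n-1}\subseteq\mathbf C^{n+1}$, and horizontality together with the tracefree relations of Lemma~\ref{L:3.2} promotes $\Phi$ to a minimal Legendre immersion $M^{n-2}\to S^{2n-3}(1)\subseteq\mathbf C^{n-1}$.

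With the orthogonal decomposition $\mathbf C^{n+1}=\mathbf C^{n-1}\oplus\mathbf C^2$ now determined, the residue $\tilde L - e^{ix}f\Phi$ lies in the $\mathbf C^2$-summand, and $|\tilde L|=|\Phi|=1$ forces it to have modulus $\sqrt{1-f^2}$; the phase $e^{i(n-1)x}$ is pinned down by projecting $D_{E_1}\tilde L$ onto $\mathbf C^2$ and invoking \eqref{5.59}--\eqref{5.64}. This produces the decomposition \eqref{7.1}. Writing $\partial_x\tilde L$ and $\partial_y\tilde L$ in the coordinates \eqref{coordinates} via \eqref{derivatives_of_f_2} and projecting onto the $\mathbf C^2$-factor then yields the stated first-order system for $(\Theta_1,\Theta_2)$; the auxiliary function $\Theta_1$ encodes the $i\tilde L$-direction killed by the Hopf projection, and $|\Theta_1|=|\Theta_2|=1$ follows from horizontality of $\tilde L$. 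The PDE for $f$ appears finally as the integrability condition $[\partial_x,\partial_y]\Theta_2=0$, equivalently as the mixed-partials identity $f_{xy}=f_{yx}$ after substituting \eqref{5.59}--\eqref{5.65} (with $c=1$).

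The principal obstacle, compared with Proposition~\ref{P:6.1}, is the position vector $\tilde L$ appearing in \eqref{2.9}: the naive analogue \eqref{6.6} is no longer $D$-flat along $\mathcal D_1$, so $\Phi$ has to be enlarged by a carefully tuned contribution supported in the orthogonal $\mathbf C^2$, which is exactly what produces the $\Theta_2$-summand of \eqref{7.1}. Simultaneously reconciling the phases $e^{ix}$ and $e^{i(n-1)x}$, the radial factor $\sqrt{1-f^2}$, the unit-length constraint $|\tilde L|=1$, and horizontality with respect to the Hopf fibration is where the bookkeeping is most delicate. The converse — that \eqref{7.1} genuinely defines a $\delta(2,n-2)$-ideal Lagrangian immersion of the stated type into $CP^n(4)$ — follows by a direct, if somewhat long, computation, exactly as in the concluding line of the proof of Proposition~\ref{P:6.1}.
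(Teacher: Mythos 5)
Your proposal follows the same overall route as the paper: reduce to the warped product of Proposition \ref{P_section_5}, work in the coordinates \eqref{coordinates}, build a map $\Phi$ that is $D$-parallel along $\mathcal D_1$ and an immersion along $\mathcal D_2$ into a fixed $S^{2n-3}(1)\subseteq\mathbf C^{n-1}$, identify it as minimal Legendre, and recover the first-order system for $(\Theta_1,\Theta_2)$ together with the PDE for $f$ as its integrability condition. The genuine gap is at the central construction step: you never actually produce $\Phi$ and $\Theta_2$. Your existence argument rests on the claim that, for the naive candidate $\Psi_0$, the derivatives $D_{E_1}\Psi_0$ and $D_{E_2}\Psi_0$ lie in $\mathrm{span}_{\mathbf R}\{\tilde L\}$, so that a primitive of an $\tilde L$-valued one-form produces $\Theta_2$. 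This is false: besides the $-\langle X,Y\rangle\tilde L$-term of \eqref{2.9}, the spherical case also changes the derivative formulas \eqref{5.63} and \eqref{5.65} (the $c$-terms, now with $c=1$), so the obstruction has nonzero components along $E_1$ and $E_2$ with complex coefficients, and moreover the normalization must change from $\sqrt{\lambda^2+\mu^2+p^2+q^2}$ to $\sqrt{1+\lambda^2+\mu^2+p^2+q^2}$ (here $f=C/\sqrt{1+\lambda^2+\mu^2+p^2+q^2}$, not $C/\sqrt{\lambda^2+\mu^2+p^2+q^2}$). Consequently, setting $\Phi=(e^{-ix}/f)\,(\tilde L-e^{i(n-1)x}\sqrt{1-f^2}\,\Theta_2)$ with $\Theta_2$ ``chosen so that $D_{E_1}\Phi=D_{E_2}\Phi=0$'' is unjustified as it stands: those two conditions form a first-order PDE system for $\Theta_2$ along $M^2$, whose local solvability (Frobenius) is precisely what would still have to be verified, together with the facts that $\Theta_2$ is unit, takes values in a fixed $\mathbf C^2$ orthogonal to the $\mathbf C^{n-1}$ containing $\Phi$, and is constant along $\mathcal D_2$.

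The paper removes all of this by writing the three maps down explicitly, cf.\ \eqref{7.5}: $\Phi=e^{-ix}\bigl(\tilde L-(p+i\lambda)E_1-(q+i\mu)E_2\bigr)/\sqrt{1+\lambda^2+\mu^2+p^2+q^2}$, and similarly for $\Theta_1,\Theta_2$ with the phase $e^{-i(n-1)x}$; note that the position vector $\tilde L$ itself enters the numerator of $\Phi$, which is exactly the correction your $\Psi_0$-discussion misses. With these formulas, $D_{E_1}\Phi=D_{E_2}\Phi=0$, the behaviour along $\mathcal D_2$, the orthogonality statements and the first-order system are direct computations from \eqref{5.59}--\eqref{5.65}, and \eqref{7.1} is then an algebraic identity rather than an ansatz whose consistency is assumed. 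Your side remark that $\Theta_1$ ``encodes the $i\tilde L$-direction killed by the Hopf projection'' is also inaccurate: $\Theta_1$ is built from $E_1,E_2$ and simply completes $\Theta_2$ to a frame of the $\mathbf C^2$-factor along $M^2$, which is what closes the stated system. If you replace your implicit choice of $\Theta_2$ by these explicit expressions (or, equivalently, first correct the candidate by the $\tilde L$-term and the new normalization, and only then define $\Theta_2$ as the normalized residual $\tilde L-e^{ix}f\Phi$), the rest of your outline goes through as in the paper.
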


\begin{proof} 

From \eqref{5.65} and \eqref{derivatives_of_f}, we obtain that
\begin{equation}
f = \frac{C}{\sqrt{1+\lambda^2+\mu^2+p^2+q^2}}.
\end{equation}
Now define the following three maps:
\begin{equation} \label{7.5}
\begin{aligned}
& \Phi = e^{-ix} \frac{\tilde L - (p+i\lambda) E_1 - (q+i\mu) E_2}{\sqrt{1+\lambda^2+\mu^2+p^2+q^2}}, \\
& \Theta_1 = e^{-i(n-1)x} \frac{(-q+i\mu) E_1 + (p-i\lambda)E_2}{\sqrt{1+\lambda^2+\mu^2+p^2+q^2}}, \\
& \Theta_2 = e^{-i(n-1)x} \frac{(\lambda^2+\mu^2+p^2+q^2)\tilde L + (p+i\lambda) E_1 + (q+i\mu) E_2}{\sqrt{\lambda^2+\mu^2+p^2+q^2}\sqrt{1+\lambda^2+\mu^2+p^2+q^2}}, \\
\end{aligned}
\end{equation}
where $x$ is the coordinate on $M^2$ defined in \eqref{coordinates}. Then $\langle \Phi,\Phi \rangle = 1$ and, denoting by $D$ the Euclidean connection on $\mathbf C^{n+1}$, one also has $D_{E_1}\Phi = D_{E_2}\Phi = 0$ and
\begin{equation} \label{7.6}
D_{E_j}\Phi = e^{-ix} \sqrt{1+\lambda^2+\mu^2+p^2+q^2} E_j
\end{equation}
for $j \geq 3$. This implies that $\Phi$ is an immersion from $M^{n-2}$ into $S^{2n+1}(1) \subseteq \mathbf C^{n+1}$. 

We will now show that the image of $\Phi$ is actually contained in a linear subspace $\mathbf C^{n-1}$ of $\mathbf C^{n+1}$ and, since it has length one, in $S^{2n-3}(1) \subseteq \mathbf C^{n-1}$. Therefore, consider for any point $p \in M^{n-2}$ the complex linear subspace $$\mathrm{span}\{\Phi(p), (d\Phi)_p(E_3), \ldots, (d\Phi)_p(E_n)\} \subseteq T_{\Phi(p)}\mathbf C^{n+1}.$$
To see that all these subspaces are in fact the same subspace of $\mathbf C^{n+1}$, we use \eqref{7.6} and the fact that
\begin{equation*}
\begin{aligned}
D_{E_j}(d\Phi)(E_k) = & \ D_{E_j} \( e^{-ix} \sqrt{1+\lambda^2+\mu^2+p^2+q^2} E_k \) \\
= & \ e^{-ix} \sqrt{1+\lambda^2+\mu^2+p^2+q^2} \Big( \delta_{jk} ((p+i\lambda)E_1 + (q+i\mu)E_2 - \tilde L) \\
& + \sum_{\ell=3}^n (\omega_k^{\ell}(E_i)+ih_{jk}^{\ell})E_{\ell} \Big) \\
= & \ - \delta_{jk} (1+\lambda^2+\mu^2+p^2+q^2) \Phi + \sum_{\ell=3}^n (\omega_k^{\ell}(E_i)+ih_{jk}^{\ell}) (d\Phi)(E_{\ell})
\end{aligned}
\end{equation*}
belongs again to $\mathrm{span}\{\Phi, (d\Phi)(E_3), \ldots, (d\Phi)(E_n)\}$ for any $j,k \geq 3$. We conclude that $\Phi$ is an immersion of $M^{n-2}$ into $S^{2n-3}(1) \subseteq \mathbf C^{n-1} \subseteq \mathbf C^{n+1}$. Moreover, it follows from the computation above that the second fundamental form of the immersion $\Phi$ coincides with the second fundamental form of $\tilde L$ restricted to $M^{n-2}$, which implies that $\Phi$ is a minimal Legendre immersion of $M^{n-2}$ into $S^{2n-3}(1)$.

It is clear that $\Theta_1$ and $\Theta_2$ take values in the orthogonal complement $\mathbf C^2$ of $\mathbf C^{n-1}$ in $\mathbf C^{n+1}$. Moreover, $D_{E_j}\Theta_1 = D_{E_j}\Theta_2 = 0$ for $j \geq 3$ and the derivatives of $\Theta_1$ and $\Theta_2$ in the directions of $E_1$ and $E_2$ are linear combinations of $\Theta_1$ and $\Theta_2$. With respect to the coordinates $(x,y)$ introduced in \eqref{coordinates}, we have
\begin{equation}
\begin{aligned}
& (\Theta_1)_x = \frac{1}{C^2-f^2} \( i f^2 \Theta_1 - \frac{C f_y}{f^{n-2}} \Theta_2 \), \\
& (\Theta_1)_y = \frac{C}{C^2-f^2} f^{n-2} (f_x + if) \Theta_2, \\
& (\Theta_2)_x = \frac{1}{C^2-f^2} \( \frac{C f_y}{f^{n-2}} \Theta_1 + i((1-n)(C^2-f^2)-f^2) \Theta_2 \), \\
& (\Theta_2)_y = \frac{C}{C^2-f^2} f^{n-2} (-f_x + if) \Theta_1.
\end{aligned}
\end{equation}
The integrability condition for this system is 
\begin{multline*}
(C^2-f^2) f^{2n-3} f_{xx} + (C^2-f^2) f f_{yy} + ( (n-2)(C^2-f^2)+2f^2 ) f^{2n-4} f_x^2 \\ - ( (n-2)(C^2-f^2)-2f^2 ) f_y^2 + ( (n-1)(C^2-f^2)+2f^2 ) f^{2n-2} = 0.
\end{multline*}
Since everything is invariant under a rescaling of $f$, if we rescale the $y$-coordinate accordingly, cfr. \eqref{coordinates}, we may assume $C=1$. This yields the equations for $f$, $\Theta_1$ and $\Theta_2$ given in the proposition. The expression for $\tilde L$ follows directly from \eqref{7.5}.

The converse can be verified by a long but straightforward computation.
\end{proof}

\subsection{Classification in $CH^{n}$}

\begin{proposition}\label{P:8.1} Let $L:M\to CH^{n}(-4)$ ($n \geq 5$) be a $\delta(2,n-2)$-ideal Lagrangian immersion whose second fundamental form is given by case $\mathrm{(III)}$ of Lemma \ref{L:3.3}. Then the horizontal lift $\tilde L: M\to H^{2n+1}_1(-1)\subseteq {\bf C}^{n+1}_1$ of $L$ is given by one of the following.

\vskip.05in 
{\rm (a)} With respect to a suitable orthogonal decomposition $\mathbf C^{n+1}_1 = \mathbf C^{n-1} \oplus \mathbf C^2_1$, 
\begin{equation}\label{8.1}
\tilde L(x,y,u_1,\ldots,u_{n-2}) =  -e^{ix} f(x,y) \Phi(u_1,\ldots,u_{n-2}) + e^{i(n-1)x} \sqrt{1+f(x,y)^2} \, \Theta_2(x,y) ,
\end{equation}
where $\Phi: M^{n-2} \to S^{2n-3}(1) \subseteq \mathbf C^{n-1}$ is a minimal Legendre immersion, the warping function $f$ is determined by
\begin{multline} \label{8.2}
(1+f^2) f^{2n-3} f_{xx} + (1+f^2) f f_{yy} + ((n-2)(1+f^2)-2f^2) f^{2n-4} f_x^2 \\ - ((n-2)(1+f^2)+2f^2) f_y^2 + ((n-1)(1+f^2)-2f^2) f^{2n-2} = 0
\end{multline}
and $\Theta_2: M^2 \to H^3_1(-1) \subseteq \mathbf C^2_1$ is a solution of the system 
\begin{equation} \label{8.3}
\begin{aligned}
& (\Theta_1)_x = \frac{1}{1+f^2} \(-i f^2 \Theta_1 - \frac{f_y}{f^{n-2}} \Theta_2 \), \\
& (\Theta_1)_y = \frac{1}{1+f^2} f^{n-2} (f_x + if) \Theta_2, \\
& (\Theta_2)_x = \frac{1}{1+f^2} \( -\frac{f_y}{f^{n-2}} \Theta_1 - i((n-1)(1+f^2)-f^2) \Theta_2 \), \\
& (\Theta_2)_y = \frac{1}{1+f^2} f^{n-2} (f_x - if) \Theta_1.
\end{aligned}
\end{equation}
 
\vskip.05in 
{\rm (b)} With respect to a suitable orthogonal decomposition $\mathbf C^{n+1}_1 = \mathbf C^{n-1}_1 \oplus \mathbf C^2$, 
\begin{equation}\label{8.4}
\tilde L(x,y,u_1,\ldots,u_{n-2}) =  e^{ix} f(x,y) \Phi(u_1,\ldots,u_{n-2}) - e^{i(n-1)x} \sqrt{f(x,y)^2-1} \, \Theta_2(x,y) ,
\end{equation}
where $\Phi: M^{n-2} \to H^{2n-3}_1(-1) \subseteq \mathbf C^{n-1}_1$ is a minimal Legendre immersion, the warping function $f$ is determined by
\begin{multline} \label{8.5}
(f^2-1) f^{2n-3} f_{xx} + (f^2-1) f f_{yy} + ((n-2)(f^2-1)-2f^2) f^{2n-4} f_x^2 \\ - ((n-2)(f^2-1)+2f^2) f_y^2 + ((n-1)(f^2-1)-2f^2) f^{2n-2} = 0
\end{multline}
and $\Theta_2: M^2 \to S^3(1) \subseteq \mathbf C^2_1$ is a solution of the system 
\begin{equation} \label{8.6}
\begin{aligned}
& (\Theta_1)_x = \frac{1}{f^2-1} \(-i f^2 \Theta_1 - \frac{f_y}{f^{n-2}} \Theta_2 \), \\
& (\Theta_1)_y = \frac{1}{f^2-1} f^{n-2} (f_x + if) \Theta_2, \\
& (\Theta_2)_x = \frac{1}{f^2-1} \( \frac{f_y}{f^{n-2}} \Theta_1 + i((n-1)(f^2-1)-f^2) \Theta_2 \), \\
& (\Theta_2)_y = \frac{1}{f^2-1} f^{n-2} (-f_x + if) \Theta_1.
\end{aligned}
\end{equation}

\vskip.05in 
{\rm (c)} With respect to the local coordinates $(x,y)$ on $M^2$ introduced above and local coordinates $(u_3,\ldots,u_n)$ on $M^{n-2}$,
\begin{equation} \label{8.6o}
\tilde L = f e^{ix}(u+iv+1,u+iv,G,\bar F), 
\end{equation}
where the warping function $f:M^2 \to \mathbf R$ is a solution of
\begin{equation} \label{8.6a} 
f^{2n-3}f_{xx} + ff_{yy} + (n-4)f^{2n-4}f_x^2 - nf_y^2 +(n-3)f^{2n-2} = 0, 
\end{equation}
$F: M^2 \to \mathbf C$ is determined by
\begin{equation} \label{8.6b} 
F_x = -e^{-i(n-3)x}\frac{f_y}{f^n}, \qquad F_y = e^{-i(n-3)x} f^{n-4} (f_x+if), 
\end{equation}
$G: M^{n-2} \to \mathbf C^{n-2}$ is a minimal Lagrangian immersion,
$u: M \to \mathbf R$ is given by
\begin{equation} \label{8.6c} 
u = \frac 12 (\langle G,G \rangle + |F|^2 - 1) + \frac{1}{2f^2}
\end{equation}
and $v: M \to \mathbf R$ is determined by
\begin{equation} \label{8.6d} 
\begin{aligned}
& v_x = -\frac{1}{f^2} - \frac{f_y}{f^n} \Im(e^{i(n-3)x}F), \\
& v_y = -f^{n-3} \Re(e^{i(n-3)x}F) + f^{n-4}f_x \Im(e^{i(n-3)x}F), \\
& v_{u_k} = \langle D_{\frac{\partial}{\partial u_k}} G,iG \rangle.
\end{aligned}
\end{equation}
\end{proposition}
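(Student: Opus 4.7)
The plan is to follow the template of Propositions \ref{P:6.1} and \ref{P:7.1}, working with the horizontal lift $\tilde L: M\to H^{2n+1}_1(-1)\subseteq \mathbf C^{n+1}_1$ and the lift formula \e{2.9} with $\varepsilon=-1$. By Proposition \ref{P_section_5}, $M$ is locally a warped product $M^2\times_f M^{n-2}$ whose frame data satisfy \e{5.59}, \e{5.63}, \e{5.64}, \e{5.65}. Setting $c=-1$ in \e{5.65} shows that $\lambda^2+\mu^2+p^2+q^2-1$ has locally constant sign on any connected $\mathcal D_1$-leaf, and this trichotomy $<1$, $>1$, $=1$ produces cases (a), (b), (c) respectively. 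In the first two subcases the warping function is $f=C/\sqrt{|1-\lambda^2-\mu^2-p^2-q^2|}$ for some positive constant $C$, which I will normalise to $1$ by rescaling $f$ together with the $y$-coordinate of \e{coordinates}.

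For cases (a) and (b) the strategy is the one of Proposition \ref{P:7.1}, suitably signed for the pseudo-Hermitian metric. I would define, in analogy with \e{7.5},
\begin{align*}
\Phi &=\frac{\epsilon_1 e^{-ix}\bigl(\tilde L+\epsilon_2((p+i\lambda)E_1+(q+i\mu)E_2)\bigr)}{\sqrt{|1-\lambda^2-\mu^2-p^2-q^2|}},\\
\Theta_2 &=\frac{e^{-i(n-1)x}\bigl((\lambda^2+\mu^2+p^2+q^2)\tilde L+\epsilon_3((p+i\lambda)E_1+(q+i\mu)E_2)\bigr)}{\sqrt{(\lambda^2+\mu^2+p^2+q^2)\,|1-\lambda^2-\mu^2-p^2-q^2|}},
\end{align*}
where the signs $\epsilon_i\in\{\pm 1\}$ depend on the subcase. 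Using \e{3.4}, \e{5.52}, \e{5.58}, \e{5.59}, \e{5.62}, \e{5.63} and \e{2.9} with $\varepsilon=-1$, a direct computation gives $D_{E_1}\Phi=D_{E_2}\Phi=0$ and $D_{E_j}\Phi\parallel E_j$ for $j\geq 3$, so that the argument of Proposition \ref{P:7.1} identifies $\Phi$ as a minimal Legendre immersion of $M^{n-2}$ into a fixed complex $(n-1)$-plane of $\mathbf C^{n+1}_1$. In case (a), $\lambda^2+\mu^2+p^2+q^2<1$, the induced Hermitian form on this plane is positive definite, giving $S^{2n-3}(1)\subseteq \mathbf C^{n-1}$ and forcing the remaining direction of $\tilde L$ into the Lorentzian block $H^3_1(-1)\subseteq \mathbf C^2_1$; in case (b) the signatures are interchanged. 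The evolution of $\Theta_2$ (paired with the auxiliary $\Theta_1$ as in Proposition \ref{P:7.1}) in the coordinates \e{coordinates} is exactly the system \e{8.3} or \e{8.6}, whose integrability condition, after the rescaling to $C=1$, collapses to the scalar PDE \e{8.2} or \e{8.5}. Inverting the defining relations for $\Phi$ and $\Theta_2$ yields the lift formulae \e{8.1} and \e{8.4}.

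The main obstacle is case (c), where $\lambda^2+\mu^2+p^2+q^2\equiv 1$ forces the 2-plane transverse to the ``$\Phi$-block'' in $\mathbf C^{n+1}_1$ to be a null plane, so the normalisations of cases (a) and (b) degenerate. The remedy is to replace the pseudo-orthonormal basis $(\Theta_1,\Theta_2)$ by a null basis of $\mathbf C^2_1$, which produces the ansatz \e{8.6o}: the first two components $(u+iv+1,u+iv)$ encode the two null directions, the block $G$ plays the role of $\Phi$, and $\bar F$ is the remaining complex coefficient. I would verify, using \e{3.4}, \e{5.52}, \e{5.58} and \e{2.9}, that $G=G(u_3,\ldots,u_n)$ is a minimal Lagrangian immersion into $\mathbf C^{n-2}$. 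The constraint $\langle\tilde L,\tilde L\rangle=-1$ gives the explicit expression \e{8.6c} for $u$, while the Lagrangian condition $\langle\tilde L,J\tilde L\rangle=0$ together with the horizontality of $\tilde L$ and $D_{E_\alpha}\tilde L=E_\alpha$ produce the definition \e{8.6b} of $F$ and the system \e{8.6d} for $v$. Finally, requiring the compatibility of \e{8.6b} reduces, after a computation using \e{5.59}, \e{5.63} and \e{5.64}, to the single PDE \e{8.6a}. The converse in all three cases is a long but direct verification.
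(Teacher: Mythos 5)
Your overall strategy coincides with the paper's: split according to the sign of $\lambda^2+\mu^2+p^2+q^2-1$ (which is indeed locally constant by \eqref{5.52}, \eqref{5.62} and \eqref{5.65} with $c=-1$), treat the two nondegenerate regimes with the analogues of the maps \eqref{7.5} from Proposition \ref{P:7.1}, and handle the degenerate regime through a constant null direction, with $u$ coming from $\langle \tilde L,\tilde L\rangle=-1$, $v$ from horizontality, and the scalar PDEs as integrability conditions. However, your assignment of the two nondegenerate regimes to cases (a) and (b) is backwards, and the justification you give is false. Writing $W=(p+i\lambda)E_1+(q+i\mu)E_2$, tangency and horizontality give $\tilde L\perp E_\alpha, iE_\alpha$, so $\langle \tilde L+W,\tilde L+W\rangle=\lambda^2+\mu^2+p^2+q^2-1$. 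Hence when $\lambda^2+\mu^2+p^2+q^2<1$ the normalized map $\Phi$ satisfies $\langle\Phi,\Phi\rangle=-1$: the complex $(n-1)$-plane spanned by $\Phi$ and the $d\Phi(E_j)$ is Lorentzian, $\Phi$ lands in $H^{2n-3}_1(-1)\subseteq\mathbf C^{n-1}_1$, and the complementary block is positive definite --- this is case (b), not case (a), and your claim that "the induced Hermitian form on this plane is positive definite" in this regime is exactly wrong. Conversely, $\lambda^2+\mu^2+p^2+q^2>1$ makes $\Phi$ spacelike, giving $S^{2n-3}(1)\subseteq\mathbf C^{n-1}$ and a timelike $\Theta_2$ in $H^3_1(-1)$, i.e.\ case (a); this is the paper's Case (1). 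As written, your argument would collapse at the signature step (you would compute $\langle\Phi,\Phi\rangle=-1$ while trying to place $\Phi$ in a round sphere); the repair is simply to interchange the two regimes.

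Two smaller points. In case (c) the paper does not posit the ansatz \eqref{8.6o}: it proves that $\Phi=e^{-ix}f(\tilde L+W)$ is a \emph{constant} lightlike vector, that $\Theta=e^{-i(n-2)x}((q-i\mu)E_1-(p-i\lambda)E_2)$ satisfies $\Theta=c_0+F\Phi$ with $F$ solving \eqref{8.6b}, and then reconstructs $\tilde L$ from \eqref{8.25}--\eqref{8.30}; your sketch should be fleshed out along these lines rather than assumed. Also, the condition $\langle\tilde L, J\tilde L\rangle=0$ you invoke is automatic for the real part of a Hermitian form and carries no information; the actual inputs for \eqref{8.6b} and \eqref{8.6d} are the frame equations for $\Theta$ and the horizontality relations $\langle\tilde L_x,i\tilde L\rangle=\langle\tilde L_y,i\tilde L\rangle=\langle\tilde L_{u_k},i\tilde L\rangle=0$, as in the paper.
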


\begin{proof}
We divide the proof into three cases.

\vskip.05in
{\it Case} (1): $\lam^2+\mu^2+p^2+q^2>1$.
It follows from \eqref{5.65} and \eqref{derivatives_of_f} that
\begin{equation}
f = \frac{C}{\sqrt{\lambda^2+\mu^2+p^2+q^2-1}}
\end{equation}
for some real constant $C > 0$. Now consider the maps
\begin{equation}
\begin{aligned}\label{8.7} 
& \Phi  =e^{-i x}\frac{\tilde L+(p+i \lambda)E_{1}+(q+i \mu)E_{2}}{\sqrt{\lambda^{2}+\mu^{2}+p^{2}+q^{2}-1}}, \\
& \Theta_{1} = e^{-i(n-1)x} \frac{(q - i \mu)E_1 -(p-i\lambda)E_2}{\sqrt{\lambda^{2}+\mu^2+p^{2}+q^2}},\; \\
& \Theta_{2} = e^{-i(n-1)x} \frac{(\lambda^{2}+\mu^2+p^{2}+q^2)\tilde L +(p+i \lam)E_{1}+(q+i \mu)E_2}{\sqrt{\lam^2+\mu^{2}+p^2+q^{2}}\sqrt{\lam^2+\mu^{2}+p^2+q^{2}-1}}.
\end{aligned}
\end{equation}
Then $\langle \Phi,\Phi \rangle = \langle \Theta_1,\Theta_1 \rangle = 1$ and $\langle \Theta_2,\Theta_2 \rangle = -1$. Continuing in the same way as in the proof of Proposition \ref{P:7.1}, we obtain case (a).

\vskip.05in
{\it Case} (2): $\lam^2+\mu^2+p^2+q^2<1$. 
It follows from \eqref{5.65} and \eqref{derivatives_of_f} that
\begin{equation}
f = \frac{C}{\sqrt{1-\lambda^2-\mu^2-p^2-q^2}}
\end{equation}
for some real constant $C > 0$. Now consider the maps
\begin{equation}
\begin{aligned}\label{8.7bis} 
& \Phi  =e^{-i x}\frac{\tilde L+(p+i \lambda)E_{1}+(q+i \mu)E_{2}}{\sqrt{1-\lambda^2-\mu^2-p^2-q^2}}, \\
& \Theta_{1} = e^{-i(n-1)x} \frac{(q - i \mu)E_1 -(p-i\lambda)E_2}{\sqrt{\lambda^{2}+\mu^2+p^{2}+q^2}},\; \\
& \Theta_{2} = e^{-i(n-1)x} \frac{(\lambda^{2}+\mu^2+p^{2}+q^2)\tilde L +(p+i \lam)E_{1}+(q+i \mu)E_2}{\sqrt{\lam^2+\mu^{2}+p^2+q^{2}}\sqrt{1-\lambda^2-\mu^2-p^2-q^2}}.
\end{aligned}
\end{equation}
Then $\langle \Phi,\Phi \rangle = -1$ and $\langle \Theta_1,\Theta_1 \rangle = \langle \Theta_2,\Theta_2 \rangle = 1$. Continuing in the same way as in the proof of Proposition \ref{P:7.1}, we obtain case (b).

\vskip.05in
{\it Case} (3): $\lam^2+\mu^2+p^2+q^2=1$. Let us put
\begin{align}\label{8.16}  &\Phi = e^{-ix}f (\tilde L+(p+i \lam)E_1+(q+i \mu)E_2 ),
\\ & \label{8.17} \Theta=e^{-i(n-2)x}\left((q-i \mu)E_1-(p-i \lam)E_2 \right),\end{align}
where $x$ is the coordinate on $M^2$ defined by \eqref{coordinates}. Then we have
\begin{align}&\label{8.18} \! \<\Phi,\Phi\>=\<\Phi,\Theta\>=0,\;\; \<\Theta,\Theta\>=1, 
\\&\label{8.19} D_{E_1}\Phi=D_{E_{2}}\Phi=D_{E_{i}}\Phi=D_{E_i}\Theta=0,\;\; i=3,\ldots,n,
\\&\label{8.20} D_{E_1}\Theta=e^{-i(n-3)x}\frac{q-i \mu}{f}\Phi,
\\&\label{8.21} D_{E_2}\Theta=-e^{-i(n-3)x}\frac{p-i \lam}{f}\Phi.
\end{align}
It follows from \e{8.18} and \e{8.19} that $\Phi$ is a constant light-like vector. Moreover, from \e{8.19}, \e{8.20} and \e{8.21}, together with \eqref{coordinates} and \eqref{derivatives_of_f_2}, we obtain that $\Theta$ can be seen as a map from $M^2$ satisfying
\begin{equation}\begin{aligned}\label{8.22} 
& \Theta_x = -e^{-i(n-3)x} \frac{f_y}{f^n} \Phi, \\
& \Theta_y = e^{-i(n-3)x} f^{n-4} (f_x+if) \Phi.
\end{aligned}\end{equation}
This implies that 
\begin{align}\label{8.23}
\Theta = c_0 + F \Phi,
\end{align} for some space-like unit vector $c_0$ perpendicular to $\Phi$ and a function $F: M^2 \to \mathbf C$ satisfying \eqref{8.6b}. Remark that \e{8.6a} is the integrability condition for the system \e{8.6b}.

From \e{8.16}  we obtain
\begin{equation} \label{8.25}  
\tilde  L = \frac{e^{ix}}{f}\Phi + \Psi,
\end{equation}
where 
\begin{equation} \label{8.25a}
\Psi=-(p+i \lam)E_1-(q+i \mu)E_2.
\end{equation}
Since $\<\right.\!\tilde L,E_1\! \left.\>=\<\right.\!\tilde L,i E_1\! \left.\>=\<\right.\!\tilde L,E_2\! \left.\>=\<\right.\! \tilde L,i E_2\! \left.\>=0$, we find from \e{8.25} and \e{8.25a}
\begin{equation}\begin{aligned} \label{8.26}  
& \<e^{ix}\Phi,E_1\>=fp, & & \<e^{ix}\Phi,E_2\>=fq, \\ 
& \<e^{ix}\Phi,i E_1\>=f\lam, & & \<e^{ix}\Phi,i E_1\>=f\mu,
\end{aligned}\end{equation}
or, equivalently,
\begin{equation}\begin{aligned} \label{8.27} 
& \< \Phi,E_1 \> = f (p\cos x + \lam\sin x),
&& \< \Phi,iE_1 \> = f (\lam\cos x - p\sin x), \\
& \< \Phi,E_2 \> = f (q\cos x + \mu\sin x),
&& \< \Phi,iE_2 \> = f (\mu\cos x - q\sin x).
\end{aligned}\end{equation}
It then follows from \e{8.25a} and \e{8.27} that
\begin{equation} \label{8.28}
\< \Psi,\Phi \> = -f \cos x, \qquad \< \Psi,i\Phi \> = -f \sin x.
\end{equation}
We obtain from \e{8.17} and \e{8.25a} that $\langle\Psi,\Theta\rangle = 0$ and together with \e{8.23} and \e{8.28} this implies
\begin{equation} \label{8.29} 
\<\Psi,c_0\> = f \, \Re (\bar F e^{ix}), \qquad \<\Psi,ic_0\> = f \, \Im (\bar F e^{ix}).\end{equation}
Without loss of generality, we may choose 
\begin{equation}\label{8.30} 
\Phi=(1,1,0,\ldots,0), \qquad  c_0=(0,0,\ldots,0,1).
\end{equation}  
It then follows from \e{8.28}--\e{8.30} that $\Psi=(\Psi_2+fe^{ix},\Psi_2,\Psi_3,\ldots,\Psi_n,f\bar Fe^{ix})$ for some functions $\Psi_2,\ldots,\Psi_n:M\to\mathbf C$. Now define real valued functions $\alpha$, $\beta$ and complex valued functions $G_3, \ldots, G_n$ by 
\begin{equation*} \label{8.31}
\Psi_2 = fe^{ix}(\alpha+i\beta), \qquad (\Psi_3,\ldots,\Psi_n)=fe^{ix}(G_3,\ldots,G_n).
\end{equation*}
Then, from \e{8.25},
\begin{equation} \label{8.32}
\tilde L = fe^{ix} \left( \frac{1}{f^2} + \alpha + i\beta + 1, \frac{1}{f^2} + \alpha + i\beta, G_3, \ldots, G_n, \bar F \right)
\end{equation}
and the conditon $\langle \tilde L, \tilde L \rangle = -1$ yields
\begin{equation} \label{8.33}
\alpha = \frac 12 \left( \langle G,G \rangle + |F|^2 - 1 \right) - \frac{1}{2f^2},
\end{equation}
where $\langle G,G \rangle$ denotes the square of the length of $G=(G_3,\ldots,G_n)$ in $\mathbf C^{n-2}$. By putting $u = \alpha+1/f^2$ and $v = \beta$, we obtain the desired expression for $\tilde L$ and \e{8.6c}. 

Let us now check that $G$ does not depend on $x$ and $y$ and is actually a Lagrangian immersion of $M^{n-2}$ into $\mathbf C^{n-2}$. Using \e{coordinates}, \e{8.16} and \e{8.17}, we can express $\tilde L_x$ and $\tilde L_y$ as linear combinations of $\tilde L$, $\Phi$ and $\Theta$. Since $\Phi=(1,1,,0,\ldots,0,0)$ and $\Theta=(F,F,0,\ldots,0,1)$, it follows from these expressions that $\tilde L_3,\ldots,\tilde L_n$ satisfy
\begin{equation} \label{8.34}
\begin{aligned}
& (\tilde L_j)_x = -\frac{1}{\lambda^2+\mu^2}(\lambda p + \mu q - i(\lambda^2+\mu^2)) \tilde L_j, \\
& (\tilde L_j)_y = \frac{f^{n-2}}{\lambda^2+\mu^2}(\mu p - \lambda q) \tilde L_j.
\end{aligned}
\end{equation}
By using that $\tilde L_j = f e^{ix} G_j$ for $j=3,\ldots,n$ and \eqref{derivatives_of_f_2}, we obtain from \e{8.34} that $(G_j)_x=(G_j)_y=0$. To show that $G:M^{n-2} \to \mathbf{C}^{n-2}$ is Lagrangian, it suffices to check that $\langle G_{u_j},iG_{u_k}\rangle = 0$ for all $j,k=1,\ldots,n-2$. A straightforward computation shows that $\langle \tilde L_{u_j},i \tilde L_{u_k} \rangle = f^2 \langle G_{u_j}, i G_{u_k}\rangle$ and since $\tilde L$ is Legendrian, we have $\langle \tilde L_{u_j},i \tilde L_{u_k} \rangle = 0$ so that we obtain the result.

Finally, we check that $v$ satisfies the system \e{8.6d}. Since $\tilde L$ is horizontal, we have $\langle \tilde L_x, i\tilde L \rangle = \langle \tilde L_y, i\tilde L \rangle = \langle \tilde L_{u_j}, i\tilde L \rangle = 0$ for all $j=1,\ldots,n-2$. A straightforward computation, using \e{8.6o}, \e{8.6b} and \e{8.6c} then gives the result.

The converse can be verified by long but straightforward computation.
\end{proof}

\section{Main theorems}

Finally, we summarize our results from above as the three main theorems.

\begin{theorem} \label{T:9.1} 
Let $M$ be a Lagrangian submanifold of the complex Euclidean space ${\bf C}^{n}$ with $n\geq 5$. Then we have the inequality
 \begin{equation*} 
\delta(2,n\hskip-.01in-\hskip-.01in 2) \leq \text{$ \frac{n^2(n-2)}{4(n-1)} $} H^2 \end{equation*}
at every point. Assume that $M$ is non-minimal. Then the equality sign in the above inequality holds identically, i.e., $M$ is $\delta(2,n-2)$-ideal, if and only if $M$ is locally congruent to the image of one of the following two immersions:

\vskip.05in 
{\rm (a)} 
\begin{equation*}
L(x, u_{2},\ldots,u_{n})=\frac{e^{i\theta(x)}}{\varphi(x)+ix}\Phi( u_{2},\ldots,u_{n}),  
\end{equation*}
with 
\begin{equation*} 
\theta(x) = \frac{n-1}{2-n} \arcsin \( cx^{\frac{n-2}{n-3}} \), \qquad\varphi(x) = \sqrt{\frac{1}{c^2 \, x^{\frac{2}{n-3}}}-x^2}, 
\end{equation*}
where $c$ is a positive constant and $\Phi$ is a minimal Legendre submani\-fold of $S^{2n-1}(1) \subseteq \mathbf C^n$ which is mapped to a $\delta(n-2)$-ideal minimal Lagrangian submanifold of $CP^{n-1}(4)$ by the Hopf fibration;
 
\vskip.05in   
{\rm (b)}
\begin{align*} L(x,y,u_1,\ldots,u_{n-2})=\big(f(x,y)e^{i x}\Phi(u_1,\ldots,u_{n-2}), z(x,y)\big),\end{align*}
where $\Phi$ defines a minimal Legendre immersion in $S^{2n-3}(1)\subset {\bf C}^{n-1}$ and $(fe^{i x},z)$ is a Lagrangian surface in ${\bf C}^2$, where $f$ is determined by 
\begin{align*} 
\frac{f_{yy}}{f^{n-2}} - (n-2) \frac{f_y^2}{f^{n-1}} + (n-1)f^{n-1} + (n-2) f^{n-3} f_x^2 + f^{n-2} f_{xx} = 0
\end{align*}
and $z$ by
\begin{equation*}
\begin{aligned}
& z_x = e^{i(n-1)x} \frac{f_y}{f^{n-2}}, \\
& z_y = e^{i(n-1)x} f^{n-1} \left(i-\frac{f_x}{f}\right).
\end{aligned}
\end{equation*}
\end{theorem}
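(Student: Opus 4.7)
The inequality in Theorem \ref{T:9.1} is Theorem \ref{T:1.4} at $c=0$, so only the classification assertion needs to be proved. My plan is to apply Lemma \ref{L:3.3}, which produces an open dense subset $V = V_{111} \cup V_{12} \cup V_2$ of $M$ on which locally one of the cases (I), (II), (III) holds. Since $M$ is non-minimal, the set where $H \neq 0$ is open and dense, so case (I) is excluded on it; locally we are always in case (II) or case (III). One also needs to verify that these two local models cannot both appear on a single connected $M$ in an incompatible way: this follows from the rigid dichotomy between \eqref{h_case(II)} (where $\gamma=(n-1)\lambda$ and $\mu=0$, so that $H$ is an eigendirection of the tensor $K$ in \eqref{defK} with simple multiplicity) and \eqref{h_case(III)} (where by definition $\mu \neq 0$ or $\gamma \neq (n-1)\lambda$); by continuity, a single connected component of $V$ falls into one family and the corresponding explicit representation extends over $M$.

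In case (II), Proposition \ref{P:4.1} expresses the immersion in the form \eqref{4.26}, with $\theta$, $\varphi$, $\lambda$ satisfying the ODE system \eqref{systemODE}. A direct computation using \eqref{systemODE} gives the conservation law $\tfrac{d}{dt}\bigl(\lambda^{2/(n-3)}(\lambda^2+\varphi^2)\bigr)=0$, so this quantity equals a positive constant which I write as $1/c^2$. After possibly replacing $E_1$ by $-E_1$ so that $\varphi>0$, one obtains \eqref{phi}; and since $d\theta/d\lambda = (n-1)/((n-3)\varphi)$, the substitution $u = c\lambda^{(n-2)/(n-3)}$ reduces the integral to $\int du/\sqrt{1-u^2}$ and yields \eqref{theta}. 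Reparametrizing by $x := \lambda(t)$ converts \eqref{4.26} into form (a) of the theorem.

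In case (III), Proposition \ref{P:6.1} gives form (b) directly, with the single scalar PDE \eqref{PDE_for_f_Cn} for the warping function $f$ and the linear first-order system \eqref{system_for_z_Cn} determining $z$. The converse in both (a) and (b) -- that each listed immersion is genuinely $\delta(2,n-2)$-ideal -- reduces to computing the second fundamental form of the given explicit immersion and checking that it matches the structure \eqref{3.4} at $c=0$, so that equality in Theorem \ref{T:1.4} is attained. The main obstacle is really nothing beyond this assembly step: since the difficult work of normalizing the second fundamental form in Lemma \ref{L:3.2}, producing the differentiable frame in Lemma \ref{L:3.3}, and integrating the resulting structure equations in Propositions \ref{P:4.1} and \ref{P:6.1} is already done, Theorem \ref{T:9.1} is obtained by combining these at $c=0$, together with the elementary reparametrization that brings case (II) into the clean form (a).
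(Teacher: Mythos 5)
Your proposal is correct and follows essentially the same route as the paper: Theorem \ref{T:9.1} is obtained there exactly by combining Theorem \ref{T:1.4} at $c=0$ with the case split of Lemma \ref{L:3.3} (case (I) being excluded by non-minimality), Proposition \ref{P:4.1} together with the integration of \eqref{systemODE} via \eqref{phi}--\eqref{theta} and the reparametrization $t\mapsto\lambda(t)$ for form (a), and Proposition \ref{P:6.1} for form (b), with the converse checked by direct computation. Your extra remark about the two local models coexisting is not needed (and the paper does not address it), since the conclusion is only a local congruence statement.
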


\begin{remark}
As pointed out in Remark \ref{RemarkMinimal}, if $M$ is a minimal $\delta(2,n-2)$-ideal Lagrangian submanifold of $\mathbf C^n$ and the bases given in Lemma \ref{L:3.2} can be pasted together to form an orthonormal frame, then $M$ is either $\delta(2)$-ideal, $\delta(n-2)$-ideal or $\delta(2,k)$-ideal for some $k$ satisfying $2 \leq k < n-2$ or it is given by
\begin{align*}  
L(x,y,u_1,\ldots,u_{n-2})=\big(L_1(x,y),L_2(u_1,\ldots,u_{n-2})\big), 
\end{align*}
where $L_1$ is a minimal $\delta(2)$-ideal Lagrangian immersion into $\mathbf C^2$ and $L_2$ is a minimal $\delta(n-2)$-ideal Lagrangian immersion into ${\bf C}^{n-2}$.
\end{remark}

\begin{theorem} \label{T:9.2} 
Let $M$ be a Lagrangian submanifold of the complex projective space $CP^{n}(4)$, with $n\geq 5$. Then we have the inequality
\begin{equation*} 
\delta(2,n\hskip-.01in-\hskip-.01in 2) \leq \text{$ \frac{n^2(n-2)}{4(n-1)} $} H^2+2(n-2) \end{equation*}
at every point. Assume that $M$ is non-minimal. Then the equality sign in the above inequality holds identically, i.e., $M$ is $\delta(2,n-2)$-ideal, if and only if $M$ is locally congruent to the image of $L = \pi \circ \tilde L$, where $\pi: S^{2n+1}(1) \to CP^n(4)$ is the Hopf fibration and $\tilde L$ is one of the following two immersions:

\vskip.05in 
{\rm (a)} 
\begin{equation*}
\tilde L(x, u_{2},\ldots,u_{n})=\(\frac{e^{i \theta}\Phi(u_{2},\ldots,u_{n})}{\sqrt{1+\lambda^2+\varphi^2}}, \frac{ e^{i(n-2)\theta}(i\lambda-\varphi)}{\sqrt{1+\lambda^2+\varphi^2}}\),  
\end{equation*}
where $\theta$, $\varphi$ and $\lambda$ are functions of $x$ only, satisfying
\begin{equation*}  
\lambda'=(n-3)\lambda\varphi, \quad \varphi'=-1-\varphi^2-(n-2)\lambda^2, \quad \theta'=\lambda,
\end{equation*}
and $\Phi$ is a Legendre immersion into $S^{2n-1}(1)$ whose image under the Hopf fibration is minimal $\delta(n-2)$-ideal Lagrangian in $CP^{n-1}(4)$;

\vskip.05in   
{\rm (b)} 
\begin{equation*}  
\tilde L(x,y,u_1,\ldots,u_{n-2}) \\ =  e^{ix} f(x,y) \Phi(u_1,\ldots,u_{n-2}) + e^{i(n-1)x} \sqrt{1-f(x,y)^2} \, \Theta_2(x,y) ,
\end{equation*}
where, with respect to a suitable orthogonal decomposition $\mathbf C^{n+1} = \mathbf C^{n-1} \oplus \mathbf C^2$, the map $\Phi$ is a minimal Legendre immersion into $S^{2n-3}(1) \subseteq \mathbf C^{n-1}$ , the real function $f$ is determined by
\begin{multline*}
(1-f^2) f^{2n-3} f_{xx} + (1-f^2) f f_{yy} + ( (n-2)(1-f^2)+2f^2 ) f^{2n-4} f_x^2 \\ - ( (n-2)(1-f^2)-2f^2 ) f_y^2 + ( (n-1)(1-f^2)+2f^2 ) f^{2n-2} = 0
\end{multline*}
and $\Theta_2$ is map into $S^3(1) \subseteq \mathbf C^2$, which is a solution of
\begin{equation*}
\begin{aligned}
& (\Theta_1)_x = \frac{1}{1-f^2} \( i f^2 \Theta_1 - \frac{f_y}{f^{n-2}} \Theta_2 \), \\
& (\Theta_1)_y = \frac{1}{1-f^2} f^{n-2} (f_x + if) \Theta_2, \\
& (\Theta_2)_x = \frac{1}{1-f^2} \( \frac{f_y}{f^{n-2}} \Theta_1 + i((1-n)(1-f^2)-f^2) \Theta_2 \), \\
& (\Theta_2)_y = \frac{1}{1-f^2} f^{n-2} (-f_x + if) \Theta_1.
\end{aligned}
\end{equation*}
\end{theorem}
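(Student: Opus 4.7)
The plan is to assemble this theorem almost entirely from the pieces already developed in Sections 3, 4 and 5, together with Theorem \ref{T:1.4}. The inequality itself is immediate from Theorem \ref{T:1.4} with $c=1$. For the classification of the equality case, my starting point is Lemma \ref{L:3.3}, which asserts that on an open and dense subset $V\subseteq M$ every point has a neighborhood falling into one of cases (I), (II) or (III). The hypothesis that $M$ is non-minimal excludes case (I) on an open dense subset (at least after possibly shrinking), so by connectedness and the continuity of $H$ it suffices to treat cases (II) and (III) and observe that a neighborhood of any non-minimal point lies in one of these two cases.

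In case (II) of Lemma \ref{L:3.3}, I invoke Proposition \ref{P:4.2} directly: the second fundamental form given by \eqref{h_case(II)} forces $M$ to be a warped product $I\times_f N$ whose Legendre lift has the explicit form \eqref{4.43}, with $\theta$, $\varphi$, $\lambda$ satisfying the ODE system \eqref{4.44} and with $\Phi$ a Legendre immersion into $S^{2n-1}(1)$ projecting to a minimal $\delta(n-2)$-ideal Lagrangian in $CP^{n-1}(4)$. After renaming the parameter $t$ to $x$, this is exactly statement (a) of Theorem \ref{T:9.2}.

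In case (III), I invoke Proposition \ref{P:7.1}, which shows that the horizontal lift $\tilde L : M \to S^{2n+1}(1)\subseteq\mathbf C^{n+1}$ decomposes with respect to a suitable orthogonal splitting $\mathbf C^{n+1}=\mathbf C^{n-1}\oplus\mathbf C^{2}$ as
\begin{equation*}
\tilde L = e^{ix} f(x,y)\Phi(u_1,\ldots,u_{n-2}) + e^{i(n-1)x}\sqrt{1-f(x,y)^2}\,\Theta_2(x,y),
\end{equation*}
where $\Phi$ is a minimal Legendre immersion into $S^{2n-3}(1)\subseteq\mathbf C^{n-1}$, $f$ satisfies the displayed nonlinear PDE, and $\Theta_2$ is a solution of the stated first-order system into $S^3(1)\subseteq\mathbf C^2$. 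This is precisely statement (b) after projecting through the Hopf fibration $\pi:S^{2n+1}(1)\to CP^n(4)$, so that $L=\pi\circ\tilde L$.

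The remaining task is the converse: to verify that the immersions in (a) and (b) are in fact $\delta(2,n-2)$-ideal Lagrangian. For (a), one computes the induced metric, the horizontality and Legendrian condition of $\tilde L$, and then checks that the second fundamental form of $L=\pi\circ\tilde L$ has the pointwise form prescribed by Lemma \ref{L:3.2}; the ODE system \eqref{4.44} is exactly what is needed to make the Codazzi equations consistent. For (b), an analogous but longer computation is required: one differentiates the explicit formula for $\tilde L$ with respect to $x$, $y$, and the $u_k$, uses the relations \eqref{7.5}--\eqref{7.6} to identify a tangent frame, verifies horizontality and the Lagrangian condition, and then reads off $h$ to check that it has the shape of case (III) of Lemma \ref{L:3.3} with the correct $\gamma,\lambda,\mu$. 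The main obstacle is bookkeeping rather than conceptual: the length of the verification in case (b), where the PDE for $f$ and the system for $\Theta_2$ must be used repeatedly to simplify the Codazzi and Gauss equations. Since both propositions in Sections 4 and 5 explicitly state that this converse "can be verified by a long but straightforward computation", I will be content to refer to that verification and write down only the skeleton, noting that the PDE and ODE systems arise precisely as the necessary and sufficient integrability conditions.
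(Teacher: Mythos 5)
Your proposal is correct and follows essentially the same route as the paper: the inequality is Theorem \ref{T:1.4} with $c=1$, and the classification is exactly the assembly of Lemma \ref{L:3.3} (discarding the minimal case (I)) with Proposition \ref{P:4.2} for case (II) and Proposition \ref{P:7.1} for case (III), the converse being the same ``long but straightforward computation'' the paper also only asserts. Nothing essential is missing.
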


\begin{remark}
As pointed out in Remark \ref{RemarkMinimal}, if $M$ is a minimal $\delta(2,n-2)$-ideal Lagrangian submanifold of $CP^n(4)$ and the bases given in Lemma \ref{L:3.2} can be pasted together to form an orthonormal frame, then $M$ is either $\delta(2)$-ideal, $\delta(n-2)$-ideal or $\delta(2,k)$-ideal for some $k$ satisfying $2 \leq k < n-2$.
\end{remark}

\begin{theorem} \label{T:9.3} 
Let $L:M\to CH^n(-4)$ be a Lagrangian submanifold of the complex hyperbolic space $CH^{n}(-4)$ with $n\geq 5$. Then we have the inequality
\begin{equation*}
\delta(2,n\hskip-.01in-\hskip-.01in 2) \leq \text{$ \frac{n^2(n-2)}{4(n-1)} $} H^2-2(n-2) \end{equation*}
at every point. Assume that $M$ is non-minimal. Then the equality sign in the above inequality holds identically, i.e., $M$ is $\delta(2,n-2)$-ideal, if and only if $M$ is locally congruent to the image of $L = \pi \circ \tilde L$, where $\pi: H^{2n+1}_1 \to CH^n(-4)$ is the Hopf fibration and $\tilde L$ is one of the following six immersions:

\vskip.05in 
{\rm (a)}
$$ \tilde L(x, u_{2},\ldots,u_{n})=\( \dfrac{e^{i \theta}\Phi(u_{2},\ldots,u_{n}) }{\sqrt{1-\lambda^2-\varphi^2}}, \dfrac{  e^{i(n-2) \theta}(i\lambda-\varphi)}{\sqrt{1-\lambda^2-\varphi^2}}\), \quad \lambda^2+\varphi^2<1, $$
where $\lambda$, $\varphi$ and $\theta$ are functions of $x$ only, satisfying
$$\lam'=(n-3)\lambda\varphi, \quad \varphi'=1-\varphi^2-(n-2)\lambda^2, \quad \theta'=\lambda, $$
and $\Phi$ is a Legendre immersion into $H_{1}^{2n-1}(-1)$ whose image under the Hopf fibration is minimal $\delta(n-2)$-ideal Lagrangian in $CH^{n-1}(-4)$;

\vskip.05in
{\rm (b)}
$$ \tilde L(x, u_{2},\ldots,u_{n})=\( \dfrac{ e^{i(n-2) \theta}(i\lambda-\varphi)}{\sqrt{\lambda^2+\varphi^2-1}},\dfrac{e^{i \theta}\Phi(u_{2},\ldots,u_{n})}{\sqrt{\lambda^2+\varphi^2-1}}\),\;\; \lambda^2+\varphi^2>1, $$
where $\lambda$, $\varphi$ and $\theta$ are functions of $x$ only, satisfying
$$ \lambda'=(n-3)\lambda\varphi, \quad \varphi'=1-\varphi^2-(n-2)\lambda^2, \quad \theta'=\lambda, $$
and $\Phi$ is a Legendre immersion into $S^{2n-1}(1)$ whose image under the Hopf fibration is minimal $\delta(n-2)$-ideal Lagrangian in $CP^{n-1}(4)$;

\vskip.05in 
{\rm (c)} 
\begin{multline*}
\tilde L(x,u_2,\ldots,u_n) = \frac{\cosh^{\frac{2}{n-3}}\left(\frac{n-3}{2}x\right)}{e^{\frac{2i}{n-3} \arctan\left(\tanh(\frac{n-3}{2}x)\right)}} \left[ \left(w + \frac i2 \langle \Phi,\Phi \rangle + i, \Phi, w + \frac i2 \langle \Phi,\Phi \rangle \right) \right. \\
+ \left. \int_0^x \frac{e^{2i \arctan\left(\tanh(\frac{n-3}{2}t)\right)}}{\cosh^{\frac{2}{n-3}} \left(\frac{n-3}{2}t \right)} dt \ (1,0,\ldots,0,1)\right],
\end{multline*}
where $\Phi$ is a minimal $\delta(n-2)$-ideal Lagrangian immersion into ${\bf C}^{n-1}$ and $w$ is the unique solution of the PDE system $ w_{u_{k}}=\< \Phi, i\Phi_{u_{k}}\>$ for $k=2,\ldots,n$;

\vskip.05in 
{\rm (d)}
\begin{equation*}
\tilde L(x,y,u_1,\ldots,u_{n-2}) \\ =  -e^{ix} f(x,y) \Phi(u_1,\ldots,u_{n-2}) + e^{i(n-1)x} \sqrt{1+f(x,y)^2} \, \Theta_2(x,y) ,
\end{equation*}
where, with respect to a suitable orthogonal decomposition $\mathbf C^{n+1}_1 = \mathbf C^{n-1} \oplus \mathbf C^2_1$, the map $\Phi$ is a minimal Legendre immersion into $S^{2n-3}(1) \subseteq \mathbf C^{n-1}$, the real function $f$ is determined by
\begin{multline*} 
(1+f^2) f^{2n-3} f_{xx} + (1+f^2) f f_{yy} + ((n-2)(1+f^2)-2f^2) f^{2n-4} f_x^2 \\ - ((n-2)(1+f^2)+2f^2) f_y^2 + ((n-1)(1+f^2)-2f^2) f^{2n-2} = 0
\end{multline*}
and $\Theta_2$ is a map into $H^3_1(-1) \subseteq \mathbf C^2_1$, which is a solution of
\begin{equation*} 
\begin{aligned}
& (\Theta_1)_x = \frac{1}{1+f^2} \(-i f^2 \Theta_1 - \frac{f_y}{f^{n-2}} \Theta_2 \), \\
& (\Theta_1)_y = \frac{1}{1+f^2} f^{n-2} (f_x + if) \Theta_2, \\
& (\Theta_2)_x = \frac{1}{1+f^2} \( -\frac{f_y}{f^{n-2}} \Theta_1 - i((n-1)(1+f^2)-f^2) \Theta_2 \), \\
& (\Theta_2)_y = \frac{1}{1+f^2} f^{n-2} (f_x - if) \Theta_1;
\end{aligned}
\end{equation*}
 
\vskip.05in 
{\rm (e)}
\begin{equation*}
\tilde L(x,y,u_1,\ldots,u_{n-2}) \\ =  e^{ix} f(x,y) \Phi(u_1,\ldots,u_{n-2}) - e^{i(n-1)x} \sqrt{f(x,y)^2-1} \, \Theta_2(x,y),
\end{equation*}
where, with respect to a suitable orthogonal decomposition $\mathbf C^{n+1}_1 = \mathbf C^{n-1}_1 \oplus \mathbf C^2$, the map $\Phi$ is a minimal Legendre immersion into $H^{2n-3}_1(-1) \subseteq \mathbf C^{n-1}_1$, the real function $f$ is determined by
\begin{multline*} 
(f^2-1) f^{2n-3} f_{xx} + (f^2-1) f f_{yy} + ((n-2)(f^2-1)-2f^2) f^{2n-4} f_x^2 \\ - ((n-2)(f^2-1)+2f^2) f_y^2 + ((n-1)(f^2-1)-2f^2) f^{2n-2} = 0
\end{multline*}
and $\Theta_2$ is a map into $S^3(1) \subseteq \mathbf C^2_1$, which is a solution of 
\begin{equation*} 
\begin{aligned}
& (\Theta_1)_x = \frac{1}{f^2-1} \(-i f^2 \Theta_1 - \frac{f_y}{f^{n-2}} \Theta_2 \), \\
& (\Theta_1)_y = \frac{1}{f^2-1} f^{n-2} (f_x + if) \Theta_2, \\
& (\Theta_2)_x = \frac{1}{f^2-1} \( \frac{f_y}{f^{n-2}} \Theta_1 + i((n-1)(f^2-1)-f^2) \Theta_2 \), \\
& (\Theta_2)_y = \frac{1}{f^2-1} f^{n-2} (-f_x + if) \Theta_1;
\end{aligned}
\end{equation*}

\vskip.05in 
{\rm (f)}
\begin{multline*}
\tilde L(x,y,u_1,\ldots,u_{n-2}) = f(x,y) e^{ix}(u(x,y,u_1,\ldots,u_{n-2})+iv(x,y,u_1,\ldots,u_{n-2})+1, \\ u(x,y,u_1,\ldots,u_{n-2})+iv(x,y,u_1,\ldots,u_{n-2}),G(u_1,\ldots,u_{n-2}), F(x,y)), 
\end{multline*}
where the real function $f$ is determined by
\begin{equation*}  
f^{2n-3}f_{xx} + ff_{yy} + (n-4)f^{2n-4}f_x^2 - nf_y^2 +(n-3)f^{2n-2} = 0
\end{equation*}
and the complex function $F$ by
\begin{equation*}  
F_x = -e^{i(n-3)x}\frac{f_y}{f^n}, \qquad F_y = e^{i(n-3)x} f^{n-4} (f_x-if), 
\end{equation*}
$G$ is a minimal Lagrangian immersion into $\mathbf C^{n-2}$, the real function 
$u$ is given by
\begin{equation*}   
u = \frac 12 (\langle G,G \rangle + |F|^2 - 1) + \frac{1}{2f^2}
\end{equation*}
and the real function $v$ is determined by
\begin{equation*}  
\begin{aligned}
& v_x = -\frac{1}{f^2} - \frac{f_y}{f^n} \Im(e^{i(n-3)x}F), \\
& v_y = -f^{n-3} \Re(e^{i(n-3)x}F) + f^{n-4}f_x \Im(e^{i(n-3)x}\bar F), \\
& v_{u_k} = \langle D_{\frac{\partial}{\partial u_k}} G,iG \rangle.
\end{aligned}
\end{equation*}
\end{theorem}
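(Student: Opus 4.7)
The inequality itself is just the specialization of Theorem~\ref{T:1.4} to $\tilde M^n(-4)$, so the content of the theorem lies in the classification of the equality case. My plan is to assemble the six cases (a)--(f) directly from the propositions already established in Sections~3, 4 and 5, after reducing the problem to a pointwise dichotomy via Lemma~\ref{L:3.3}.

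First, I apply Lemma~\ref{L:3.3} to obtain the open dense subset $V = V_{111}\cup V_{12}\cup V_2 \subseteq M$. Since $M$ is assumed non-minimal, the open set $V_2$ (case (I)) is empty — for if some nonempty open $W\subset V_2$ existed, the real-analytic character of the equality system (or, more elementarily, a connectedness/unique continuation argument applied to the analytic Lagrangian immersion into the analytic space $CH^n(-4)$) would force $H\equiv 0$ on the connected component of $M$ containing $W$, contradicting the hypothesis. Hence every point of $V$ lies in either $V_{12}$ (case (II) of Lemma~\ref{L:3.3}) or $V_{111}$ (case (III)), and we can work locally on each of these pieces.

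On a neighborhood where case (II) holds, Proposition~\ref{P:4.3} gives precisely the three congruence classes (a), (b), (c) of the theorem, according to the sign of $1-\lambda^2-\varphi^2$ (the curve in the two-dimensional complex plane lies in the ``small'', ``large'' or ``degenerate'' orbit of the $U(1,1)$-action on $H_1^{2n+1}(-1)$; this is exactly the trichotomy isolated in \cite{cvv} and recalled here as Proposition~\ref{P:4.3}). On a neighborhood where case (III) holds, Subsection~5.1 reduces the structure to that of Proposition~\ref{P_section_5} (a warped product $M^2\times_f M^{n-2}$ with $M^2$ totally geodesic and $M^{n-2}$ spherical), and then Proposition~\ref{P:8.1} supplies precisely the three congruence classes (d), (e), (f) of the theorem, corresponding respectively to $\lambda^2+\mu^2+p^2+q^2>1$, $<1$ and $=1$. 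In each instance the pointwise quantity $1\mp(\lambda^2+\mu^2+p^2+q^2)$ does not change sign in a neighborhood, so the three sub-cases of Proposition~\ref{P:8.1} are genuinely local and independent.

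The converse direction — that each of the six immersions produces a $\delta(2,n-2)$-ideal Lagrangian submanifold of $CH^n(-4)$ — is already noted in Propositions~\ref{P:4.3} and~\ref{P:8.1}, where it is verified by a direct computation of the second fundamental form against the pattern \eqref{h_case(II)} or \eqref{h_case(III)}, combined with Lemma~\ref{L:3.2}. The one point that requires care, and which I view as the main obstacle, is the transition between the different open pieces: a priori, distinct local models from \{(a),\ldots,(f)\} could meet along the common boundary of $V_{12}$ and $V_{111}$, or along the loci $\lambda^2+\varphi^2=1$ and $\lambda^2+\mu^2+p^2+q^2=1$ where the ansatz in \eqref{8.7}, \eqref{8.7bis}, \eqref{8.16} degenerates. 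I would handle this exactly as in the statement of the theorem, namely by formulating the classification as a \emph{local congruence} at points of $V$; the complement $M\setminus V$ is nowhere dense, so the six normal forms exhaust the open dense set of $M$ on which the equality case of the inequality is realized with the ``generic'' behaviour prescribed by Lemma~\ref{L:3.3}, which is what the theorem asserts.
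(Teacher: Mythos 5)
Your proposal is correct and follows essentially the same route as the paper: Theorem~\ref{T:9.3} is obtained by specializing Theorem~\ref{T:1.4} to $c=-1$ and then assembling the local trichotomy of Lemma~\ref{L:3.3} with Proposition~\ref{P:4.3} (giving (a), (b), (c)) and Proposition~\ref{P:8.1} (giving (d), (e), (f)), exactly as the paper does in its summary section. The only embellishment is your analyticity/unique-continuation argument to exclude $V_2$, which the paper does not need since it states the classification only as a local congruence on the open dense set where $H\neq 0$.
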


\begin{remark}
As pointed out in Remark \ref{RemarkMinimal}, if $M$ is a minimal $\delta(2,n-2)$-ideal Lagrangian submanifold of $CH^n(-4)$ and the bases given in Lemma \ref{L:3.2} can be pasted together to form an orthonormal frame, then $M$ is either $\delta(2)$-ideal, $\delta(n-2)$-ideal or $\delta(2,k)$-ideal for some $k$ satisfying $2 \leq k < n-2$.
\end{remark}


\begin{thebibliography}{12}

\bibitem{bo} J. Bolton, F.  Dillen,  J.  Fastenakels and L. Vrancken, A best possible inequality for curvature-like tensor fields, {\it  Math. Inequal. Appl.} {\bf 12} (2009),  663--681.

\bibitem{BMV} J. Bolton, C. Rodriguez Montealegre and L. Vrancken, {Characterizing warped product Lagrangian immersions in complex projective space}, {\it Proc. Edinb. Math. Soc.} \textbf{51} (2008), 1--14.

\bibitem{BV} J. Bolton and L. Vrancken, {Lagrangian submanifolds attaining equality in the improved Chen's inequality}, {\it Bull. Belg. Math. Soc. Simon Stevin} \textbf{14} (2007), 311--315.

\bibitem{c2} B.-Y. Chen, Some pinching and classification theorems for minimal submanifolds, {\it Arch. Math.} {\bf 60} (1993), 568--578.

\bibitem{israel} B.-Y. Chen, Interaction of Legendre curves and Lagrangian submanifolds, {\it Israel J. Math.} {\bf 99} (1997), 69--108.

\bibitem{tohoku} B.-Y. Chen, Complex extensors and Lagrangian submanifolds in complex Euclidean spaces, {\it Tohoku Math. J.} {\bf 49} (1997), 277--297.

\bibitem{c00a} B.-Y. Chen, Some new obstruction to minimal and Lagrangian isometric immersions, {\it Japan. J. Math.} {\bf 26} (2000), 105--127.

 \bibitem{c00b} B.-Y. Chen,  {Ideal Lagrangian immersions in complex space forms}, {Math. Proc. Cambridge Philos. Soc.} {\bf 128} (2000),  511--533.

\bibitem{book} B.-Y. Chen, Pseudo-Riemannian Geometry, $\delta$-invariants and Applications, World Scientific, Hackensack, New Jersey, 2011.

\bibitem{CD11} B.-Y. Chen and F. Dillen, Optimal general  inequalities for Lagrangian submanifolds in complex space forms, {\it J. Math. Anal. Appl.}  {\bf 379} (2011), 229--239. 

\bibitem{cdvv96}  B.-Y. Chen, F. Dillen, L. Verstraelen and L. Vrancken, {An exotic totally real minimal immersion of $S^3$ in $CP^3$ and its characterization},  {Proc. Roy. Soc. Edinburgh Sec. A Math.} {\bf 126} (1996), 153--165.

\bibitem{cdv}  B.-Y. Chen, F. Dillen and L. Vrancken,  Lagrangian submanifolds in complex space forms attaining equality in a basic inequality, {\it J. Math. Anal. Appl.} {\bf 386} (2012), 139--152.

\bibitem{cdvv}  B.-Y. Chen, F. Dillen, J. Van der Veken and L. Vrancken, Curvature inequalities for Lagrangian submanifolds: the final solution, Differ. Geom. Appl. {\bf 31} (2013), 808--819. 

\bibitem{CO} B.-Y. Chen and K. Ogiue, {On totally real submanifolds,} {\it Trans. Amer. Math. Soc.} {\bf 193} (1974), 257--266.

\bibitem{cvv}  B.-Y. Chen, J. Van der Veken and L. Vrancken, Lagrangian submanifolds with prescribed second fundamental form, in: Pure and Applied Differential Geometry PADGE 2012 (2013), 91--98. 

\bibitem{cpw} B.-Y. Chen, A. Prieto-Mart\'{i}n and X. Wang, Lagrangian submanifolds  in complex space forms  satisfying an improved equality involving $\delta(2,2)$, Publ. Math. Debrecen {\bf 82} (2013), 193--217.

\bibitem{sdsv} F. Dillen, C. Scharlach, K. Schoels and L. Vrancken, {Special Lagrangian 4-folds with $SO(2)\rtimes S_{3}$-symmetry in complex space forms}, Taiwanese J. Math. {\bf 19} (2015), 759--792.

\bibitem{rec}  H. Reckziegel,  Horizontal lifts of isometric immersions into the bundle space of a pseudo-Riemannian submersion, {\it Lecture Notes in Mathematics,} {\bf 1156} (1985), 264--279.


\end{thebibliography}
\end{document}